\newcounter{bigthm}
\newtheorem{bigtheorem}[bigthm]{Theorem} 
\newtheoremstyle{citedtheorem}%
  {3pt}
  {3pt}
  {\itshape}
  {}
  {\bfseries}
  {.}
  {.5em}
  {\thmname{#1} \thmnumber{#2} \thmnote{\normalfont#3}}
\newtheoremstyle{citeddefinition}%
  {3pt}
  {3pt}
  {\normalfont}
  {}
  {\bfseries}
  {.}
  {.5em}
  {\thmname{#1} \thmnumber{#2} \thmnote{\normalfont#3}}
\newtheoremstyle{citedexample}%
  {3pt}
  {3pt}
  {\normalfont}
  {}
  {\itshape}
  {.}
  {.5em}
  {\thmname{#1} \thmnumber{#2} \thmnote{\normalfont#3}}
\theoremstyle{plain}
\newtheorem{theorem}{Theorem}[section]
\newtheorem{lemma}[theorem]{Lemma}
\newtheorem{corollary}[theorem]{Corollary}
\newtheorem{proposition}[theorem]{Proposition}
\newtheorem{definition}[theorem]{Definition}
\theoremstyle{citedtheorem}
\theoremstyle{definition}
\theoremstyle{citeddefinition}
\theoremstyle{remark}
\newtheorem{remark}[theorem]{Remark}
\newtheorem{example}[theorem]{Example}
\theoremstyle{citedexample}
\newcommand{\bB}{\mathbb{B}}
\newcommand{\bC}{\mathbb{C}}
\newcommand{\bN}{\mathbb{N}}
\newcommand{\bR}{\mathbb{R}}
\newcommand{\bS}{\mathbb{S}}
\newcommand{\bV}{\mathbb{V}}
\newcommand{\bZ}{\mathbb{Z}}
\newcommand{\cB}{\mathcal{B}}
\newcommand{\cD}{\mathcal{D}}
\newcommand{\cE}{\mathcal{E}}
\newcommand{\cM}{\mathcal{M}}
\newcommand{\cQ}{\mathcal{Q}}
\newcommand{\cR}{\mathcal{R}}
\newcommand{\cS}{\mathcal{S}}
\newcommand{\cT}{\mathcal{T}}
\newcommand{\cV}{\mathcal{V}}
\newcommand{\fF}{\mathfrak{F}}
\newcommand{\fo}{\mathfrak{o}}
\newcommand{\fs}{\mathfrak{s}}
\newcommand{\sD}{\mathscr{D}}
\newcommand{\sE}{\mathscr{E}}
\newcommand{\sG}{\mathscr{G}}
\newcommand{\sJ}{\mathscr{J}}
\newcommand{\sN}{\mathscr{N}}
\newcommand{\sO}{\mathscr{O}}
\newcommand{\sR}{\mathscr{R}}
\newcommand{\sS}{\mathscr{S}}
\newcommand{\sU}{\mathscr{U}}
\newcommand{\sV}{\mathscr{V}}
\renewcommand{\a}{\alpha} 
\renewcommand{\b}{\beta} 
\newcommand{\g}{\gamma}
\newcommand{\n}{\nu}
\newcommand{\x}{\xi}
\newcommand{\vp}{\varphi}
\renewcommand{\l}{\ell} 
\newcommand{\Z}{\mathbb{Z}}
\newcommand{\R}{\mathbb{R}}
\newcommand{\C}{\mathbb{C}}
\renewcommand{\d}{\partial} 
\newcommand{\sub}{\subseteq}
\newcommand{\setm}{\, \setminus \,}
\newcommand{\<}{\left\langle}
\renewcommand{\>}{\right\rangle}
\DeclareMathOperator{\Hom}{Hom}
\DeclareMathOperator{\End}{End}
\DeclareMathOperator{\id}{id}
\DeclareMathOperator{\rank}{rank}
\DeclareMathOperator{\Tr}{Tr}
\let\Re\relax
\DeclareMathOperator{\Re}{Re}
\DeclareMathOperator{\Cl}{C\l}
\DeclareMathOperator{\spec}{spec}
\DeclareMathOperator{\Spin}{Spin}
\DeclareMathOperator{\SO}{SO}
\title{\vspace{-20pt}A Calder\'{o}n Problem for the Dirac operator with chiral boundary conditions}
\author{Carlos Valero}
\begin{document}

\maketitle

\begin{abstract}
    We consider on a spin manifold with boundary a Dirac operator $D_A$ with chiral boundary conditions, twisted by a unitary connection $A$. When $m$ is not in the chiral spectrum of $D_A$, we define an analogue of the Dirichlet-to-Neumann map for the Dirac equation $D_A - m$, which we call the boundary conjugation map, and show that it is a pseudodifferential operator of order $0$ on the boundary. We show that in dimension greater than $2$, its symbol determines the Taylor series of the metric and connection modulo gauge on the boundary when $m \neq 0$ and $m^2$ is not in the Dirichlet spectrum of $D_A^2$. We go on to show that a real-analytic Riemannian manifold and twisted spinor bundle with twisted spin connection can be recovered from its boundary conjugation map. Under further hypotheses, one can recover the unitary connection up to global gauge equivalence and the complex spinor bundles. Similar results hold in dimension $2$ when the auxiliary bundle and connection are absent.
\end{abstract}

\tableofcontents

\newpage

\section{Introduction}\label{sec: intro}

An archetypal problem in the field of inverse problems is the Calder{\'o}n problem, which was first introduced by Alberto Calder{\'o}n in \cite{Calderon1980}, where he considers the question of determining the electrical conductivity of a material by measuring the electric potential and current on its boundary. If one considers anisotropic conductivities, one can recast this question in the language of differential geometry as follows: can we recover a compact Riemannian manifold $(M,g)$ with boundary up to isometry from the set of Cauchy data of harmonic functions on the boundary? The Cauchy data is conveniently captured by the Dirichlet-to-Neumann map $\Lambda_g : C^\infty(\d M) \to C^\infty(\d M)$, which sends $f \in C^\infty(\d M)$ to the normal derivative $\d_n \vp |_{\d M}$, where $\vp \in C^\infty(M)$ is the unique solution to $\Delta \vp = 0$ with $\vp|_{\d M} = f$. The Calder{\'o}n problem can therefore also be formulated by asking if a compact Riemannian manifold is determined up to isometry by its Dirichlet-to-Neumann map. 

The purpose of the present paper is to introduce a Calder{\'o}n problem for the Dirac operator, a first-order elliptic differential operator acting on sections of a complex vector bundle that plays a fundamental role in geometry and physics. We consider well-posed local boundary conditions for the Dirac operator on a spin manifold with boundary, and define an analogue of the Dirichlet-to-Neumann map for this first-order boundary value problem, which we call the boundary conjugation map. The Calder{\'o}n problem for the Dirac operator then takes the following form: can we recover a Riemannian manifold, unitary connection, and other geometric structures from the boundary conjugation map associated to the Dirac operator with these boundary conditions? We shall see in this paper that the answer is yes when all the data are real-analytic, by adapting methods used to tackle other Calder{\'o}n problems for second-order operators to our first-order setting.

\vspace{5pt}

There is abundant literature on the Calder{\'o}n problem and its many generalizations to other geometric settings. The anisotropic Calder{\'o}n conjecture for the scalar Laplacian has been proved in dimension $2$ for smooth data \cite{LassasUhlmann2001}; in dimensions greater than $2$, the conjecture remains open in the smooth case, though it has been proven for real-analytic data \cite{LassasUhlmann2001}. The first key step in the proof comes in the classical work of Lee and Uhlmann \cite{Lee1989}, wherein it is shown that the Dirichlet-to-Neumann map is a pseudodifferential operator of order $1$ whose symbol determines the Taylor series of the metric at the boundary. Building on this, Lassas and Uhlmann proved in \cite{LassasUhlmann2001} that the Dirichlet-to-Neumann map determines the Green's functions on an extended manifold, and then recover the manifold as a suitable quotient of a sheaf constructed from the Green's functions. In \cite{LTU2003}, the authors use a different approach, still based on the real-analyticity of the Green's functions, to extend this reconstruction result to complete manifolds with boundary. Here, however, the Green's functions are used to embed two manifolds with equal Dirichlet-to-Neumann maps into a suitable Sobolev space, from which an isometry can then be constructed. This latter method seems easier to adapt to other geometric settings, and indeed has been used in \cite{KLU2011} to recover a real-analytic Riemannian manifold from the Dirichlet-to-Neumann map of its Hodge Laplacian acting on differential forms; in \cite{LTS2022} to recover the conformal class of a real-analytic manifold from the Cauchy data of its conformal Laplacian; and in \cite{Gabdurakhmanov2025} to recover a real-analytic manifold and connection from the Cauchy data of the connection Laplacian. In all of these case, the geometric structure can be recovered from {\em local} data, that is to say knowledge of the Dirichlet-to-Neumann map on an open subset of the boundary. A version of this argument is also used in \cite{EV2024} where a Calder{\'o}n problem for the curl operator on a Riemannian $3$-manifold is considered. There, in lieu of Green's functions, which do not exist for the curl operator, small current loops are used to recover a real-analytic simply connected $3$-manifold from global boundary data. A key idea in \cite{EV2024} is to first relate the Calder{\'o}n problem for the first-order curl operator to that of the second order Hodge Laplacian, for which there are more tools available. This idea of relating a first-order operator to a second-order one shall also play an important role in the present paper, though the analysis for the Dirac operator appears more difficult than for the curl operator, due to the conformal symmetry.

On the other hand, inverse problems for Dirac operators and connections have also been studied in the literature. Using the boundary control method, Kurylev and Lassas showed in \cite{KurylevLassas2009} that a Riemannian manifold and Dirac bundle can be reconstructed from the response operator associated to the hyperbolic Dirac operator with self-adjoint boundary conditions; equivalently, instead of the response operator, one can start with knowledge of the boundary spectral data of the elliptic Dirac operator, that is, the set of eigenvalues and the boundary values of the eigenfunctions. This is in contrast to the present paper, in which we consider boundary data for the elliptic problem at a {\em fixed} non-zero frequency $m$. Other recent works that consider inverse problems for Dirac-type operators include \cite{QuanUhlmann2022}, in which the question of recovering a closed Riemannian manifold and Dirac bundle from the source-to-solution map of the fractional Dirac operator is considered, as well as \cite{Roberts2022, Roberts2022thesis}, which consider the application of the boundary control method to the Hodge--Dirac operator acting on differential forms. We also mention that there have been a number of works on inverse problems for the Dirac operator on domains in $\bR^3$ that consider the boundary conjugation map under other names; see, for example, \cite{Tsuchida1998, Li2007, Salo2008, Salo2010} and the references found therein for a more complete review. One inverse problem considered in these works is the recovery of a vector potential up to gauge equivalence from boundary values of spinors, assuming the data are $C^\infty$, or possibly less regular. As unitary connections generalize the electromagnetic vector potential, the present work can be seen as extending some of these results to arbitrary manifolds and metrics, at the cost of assuming that the data be $C^\omega$. In addition to the aforementioned work \cite{Gabdurakhmanov2025}, the Calder{\'o}n problem for connections is also considered in \cite{Cekic2017}, in \cite{Cekic2020} for Yang--Mills connections, and in \cite{Kurylev2018} for the wave equation associated to the connection Laplacian. Using the methods of \cite{Cekic2020}, the recovery of Yang--Mills--Dirac connections from the Dirichlet-to-Neumann map of the Dirac Laplacian is considered in \cite{Valero2024}, some results of which are built upon in the present paper.

Finally, we mention that while the anisotropic Calder{\'o}n problem for the scalar Laplacian remains open for smooth metrics, there are a number of uniqueness results known for $C^{\infty}$ metrics in the settings of conformally transversally anisotropic manifolds \cite{DKSU2009, DKLS2016, KS2013}, Einstein manifolds \cite{GS2009}, and conformally St{\"a}ckel manifolds \cite{DKN2021}. On the other hand, there are also a number of recent non-uniqueness results for the Calder{\'o}n problem with local or disjoint data \cite{DKN2018, DKN2019, DKN2020}, and with global $C^k$ data at non-zero frequency \cite{DHKN2024}. For a survey of uniqueness and non-uniqueness results for the anisotropic Calder{\'o}n problem, we refer the reader to \cite{DKN2018}. 










\subsection{Main Results}\label{subsec: main results}

Here, we briefly introduce the boundary conjugation map associated to the twisted Dirac equation $D_A - m$ with chiral boundary conditions acting on sections of a twisted spinor bundle over a spin manifold with boundary, and state the two main uniqueness results that we prove in this paper. A more detailed overview of the fundamental ideas and definitions are provided in Section \ref{sec: prelim}. We end this section with a brief summary of the paper.

Suppose $(M,g)$ is a Riemannian spin manifold with boundary $\d M$, endowed with a complex spinor bundle $\bS$, along with a Hermitian vector bundle $(E,h)$ carrying a unitary connection $A$. We then consider the twisted spinor bundle $\bS_E := \bS \otimes E$, which carries a twisted spin connection $\nabla^A$ and twisted Dirac operator $D_A$. Let $\gamma^n$ denote Clifford multiplication on $\bS_E|_{\d M}$ by the inward unit normal, and suppose that there exists a chirality operator $\Pi$ on $\bS_E$ satisfying \eqref{chirality operator conditions 1}--\eqref{chirality operator conditions 2}. Then we have $\bS_E|_{\d M} = \bV_E^+ \oplus \bV_E^-$, where $\bV_E^{\pm}$ is the $\pm 1$ eigenspace of $\gamma^n \Pi$. Let $\bB^{\pm} : \bS_E|_{\d M} \to \bV_E^{\pm}$ be the orthogonal projector onto $\bV_E^{\pm}$. Then, for any real $m$ not in the chiral spectrum of $D_A$, there exists a unique solution to the boundary value problem
\begin{equation}\label{chiral BVP intro}
    \begin{cases}
        (D_A - m)\psi = 0, \\
        \bB^{+} \psi = f,
    \end{cases}
\end{equation}
for $f \in C^{\infty}(\d M, \bV^+ \otimes E)$. We then define the boundary conjugation map $\Theta_{g,A,m} : \bV_E^+ \to \bV_E^-$ by
\begin{equation}
    \Theta_{g,A,m}(f) := \bB^- \psi,
\end{equation}
where $\psi$ is the unique solution to the boundary value problem \eqref{chiral BVP intro}. For an open subset $\sU \sub \d M$, we can also define a local version of the boundary conjugation map by restricting $\Theta_{g,A,m}$ to sections with support in $\sU$, and restricting the image to $\sU$ (see Definitions \ref{definition of Theta map, global} and \ref{definition of Theta map, local}).

The first result we prove is the following boundary determination theorem for the boundary conjugation map, which states that $\Theta_{g,A,m, \sU}$ determines the Taylor series of the metric and connection modulo gauge on $\sU$ when the conformal symmetry is broken in dimensions $3$ or higher, and the Taylor series of the metric in dimension $2$ when $m \neq 0$ and the connection is absent:


\begin{bigtheorem}\label{main theorem: boundary determination, intro}
    Let $(M,g)$ be an $n$-dimensional spin manifold with boundary $\d M$, and let $(E, h, A)$ be a Hermitian bundle with unitary connection over $M$. Assume that $m$ is not in the chiral spectrum of $D_A$ and that $m^2$ is not in the Dirichlet spectrum of $D_A^2$. Let $\Theta_{g,A,m,\sU}$ denote the local boundary conjugation map associated to $D_A - m$ with chiral boundary conditions as above.
    \begin{enumerate}[{\em \ \ \ (1)}]
        \item Suppose $n \geq 3$. If $m \neq 0$, then $\Theta_{g,A,m,\sU}$ determines all normal derivatives of the metric on $\sU$, and all normal derivatives of the connection in a gauge where $A_n = 0$ near $\sU$. If $m = 0$, then $\Theta_{g,A,0,\sU}$ determines $g|_{\sU}$ up to a locally constant conformal factor on $\sU$. Moreover, the normal derivatives of $g$ and $A$ are still determined by $\Theta_{g,A,0,\sU}$ if $g$ and all normal derivatives of the mean curvature are prescribed on $\sU$.

        \item Suppose that $n = 2$, and that the auxiliary bundle $E$ and connection $A$ are absent. Then $\Theta_{g,m,\sU}$ determines all normal derivatives of the metric on $\sU$ if $m \neq 0$.
    \end{enumerate}
\end{bigtheorem}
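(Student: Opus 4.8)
The plan is to reduce the boundary determination, as far as the mass allows, to that of the second-order operator $D_A^2-m^2$. Work in boundary normal coordinates $(x',x_n)$ on a collar of $\sU$, so $g = dx_n^2 + g_{\a\b}(x',x_n)\,dx^\a dx^\b$, and fix the radial gauge $A_n\equiv 0$; in this gauge the unknowns are exactly the $x_n$-Taylor coefficients at $x_n=0$ of $g_{\a\b}$ and of the tangential components $A_\a$, the latter determined only up to a gauge transformation on $\sU$. Along the collar one has $D_A = \g^n(\d_n + \slashed{D}(x_n))$ with $\slashed{D}(x_n)$ a first-order operator on the slice $\{x_n=\mathrm{const}\}$ --- a twisted hypersurface Dirac operator plus a mean-curvature term --- whose full symbol is built polynomially from the jets of $g$ and $A$. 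The first ingredient is that $\Theta_{g,A,m}$ is a pseudodifferential operator of order $0$, which comes from a microlocal factorization of $D_A-m$ near $\d M$: the boundary values of interior-decaying solutions of $(D_A-m)\psi=0$ form the range of a Calder\'on-type projector $\cC_+$, whose principal symbol at $(x',\xi')$ is the spectral projection onto the $+|\xi'|_g$-eigenspace of $-i\g^n c(\xi')$ (here $c$ is Clifford multiplication), and $\Theta_{g,A,m}=\bB^-\cC_+(\bB^+\cC_+)^{-1}$ modulo smoothing. Equivalently, writing $\psi^\pm=\bB^\pm\psi$ and $\psi^-=\cE(x_n)\psi^+$ for decaying solutions, $\cE$ solves a Riccati equation obtained by inserting this ansatz into $(D_A-m)\psi=0$, and solving it order by order in the tangential symbol calculus --- the leading symbol fixed by the decay condition --- expresses the full symbol of $\Theta_{g,A,m}=\cE(0)$ at $\sU$ recursively in terms of the jets of $g$, $A$ and the constant $m$.

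Suppose first $m\neq 0$. Because $D_A^2-m^2=(D_A-m)(D_A+m)$ and $m$ is not in the chiral spectrum while $m^2$ is not in the Dirichlet spectrum of $D_A^2$, every solution of $(D_A^2-m^2)\psi=0$ splits uniquely as $\psi=\psi_1+\psi_2$ with $(D_A-m)\psi_1=0$, $(D_A+m)\psi_2=0$, and then $D_A\psi|_{\d M}=m(\psi_1-\psi_2)|_{\d M}$ together with $\psi|_{\d M}$ recovers $\nb^A_n\psi|_{\d M}$. Tracking the chiral data of $\psi_1,\psi_2$ through this splitting --- and using Green's formula for $D_A$ to express the boundary conjugation map of $D_A+m$ in terms of that of $D_A-m$ --- one shows that $\Theta_{g,A,m}$ determines the symbol at $\sU$ of the Dirichlet-to-Neumann map of $D_A^2-m^2$. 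By the Lichnerowicz--Weitzenb\"ock formula $D_A^2-m^2=(\nb^A)^*\nb^A+\tfrac{s_g}{4}+\tfrac12 c(F_A)-m^2$ is of connection-Laplacian-plus-potential type, and crucially its potential is, because of the constant $-m^2$, not conformally natural --- unlike the conformally covariant term $\tfrac{s_g}{4}$ alone. Hence the Lee--Uhlmann-type boundary analysis for such second-order operators (as in \cite{Lee1989,Gabdurakhmanov2025,Valero2024}) applies: the symbol of the Dirichlet-to-Neumann map at $\sU$ determines $g|_\sU$ when $n\geq 3$, and then inductively all normal derivatives of $g$ and of $A$ modulo gauge on $\sU$ --- in radial gauge, the $\d_n^k A_\a|_{x_n=0}$. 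This is part (1) for $m\neq 0$. For part (2) the bundle is absent, $D^2-m^2=\nb^*\nb+\tfrac{s_g}{4}-m^2$, and although the boundary is a curve with no local conformal information the constant $-m^2$ again breaks the two-dimensional conformal obstruction, so all normal derivatives of $g$ are recovered; the rank-$2$ spinor bundle makes these symbol computations fully explicit.

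When $m=0$ the reduction fails --- $D_A\psi_1=D_A\psi_2=0$ carries no normal-derivative data --- and this is unavoidable: $D_A$ is conformally covariant, so $\Theta_{e^{2\phi}g,A,0}$ is obtained from $\Theta_{g,A,0}$ by conjugation with multiplication by $e^{\frac{n-1}{2}\phi|_{\d M}}$, whence $\Theta_{g,A,0}$ determines $g$ near $\sU$ only up to conformal changes with $\phi|_\sU$ locally constant --- in particular $g|_\sU$ up to a locally constant factor. The connection's normal derivatives in radial gauge, and the conformal class of $g|_\sU$, are nonetheless read off from the symbol of $\Theta_{g,A,0}$ via the Riccati recursion of the first paragraph. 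If $g|_\sU$ and all normal derivatives of the mean curvature of $\sU$ are prescribed in addition, then --- a conformal change fixing $g|_\sU$ alters the mean curvature and its normal derivatives through the normal jet of $\phi$ --- the residual conformal freedom is eliminated, and the recursion recovers the full Taylor series of $g$ and $A$ on $\sU$.

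I expect the main obstacle to be the conformal symmetry: one must organize the recursion (equivalently, the second-order Lee--Uhlmann recursion) so that the genuinely new Taylor coefficient appearing at each order is recoverable, rather than only some conformally invariant combination of it with lower-order data, and must pin down precisely how the mass, which enters only the subprincipal and lower symbols, breaks this degeneracy --- including checking non-degeneracy of the resulting linear system when $n=2$. The remaining effort is bookkeeping: carrying the twisting bundle, the Clifford and chirality algebra, and the $A_n=0$ gauge through the symbol calculus, and establishing the Green's-formula identity relating the $\pm m$ boundary conjugation maps on which the reduction to the second-order problem rests.
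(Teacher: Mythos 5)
Your broad strategy --- pass through the second-order problem $D_A^2-m^2$ and apply a Lee--Uhlmann--type symbol analysis --- is in the right neighbourhood, but the specific reduction you propose has a genuine gap, and it is the gap that the paper's proof is built to circumvent.

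You claim that for $m\neq 0$ the boundary conjugation map $\Theta_{g,A,m,\sU}$ determines the full Dirichlet-to-Neumann map $\Lambda_{g,A,m,\sU}$ of $D_A^2-m^2$, via the decomposition $\psi=\psi_1+\psi_2$ with $(D_A\mp m)\psi_{1,2}=0$. The obstruction is that neither $\bB^+\psi_1$ nor $\bB^-\psi_2$ is known from $f=\psi|_{\d M}$: recovering them requires solving a $2\times 2$ operator system whose $2\times 2$ block matrix has principal symbol
\begin{equation*}
  \begin{pmatrix} 1 & \theta_0 \\ \theta_0 & 1 \end{pmatrix},
  \qquad \theta_0 = -i\,\gamma^n\,\frac{\gamma(\xi)}{|\xi|_g},\qquad \theta_0^2 = 1,
\end{equation*}
and this matrix is singular (determinant $1-\theta_0^2=0$), so the system is not elliptic and cannot be inverted within the symbol calculus; see also the identity $\Pi D_A\Pi^{-1}=-D_A$, which shows the putative $D_A+m$ Cauchy data is just a $\Pi$-conjugate of the $D_A-m$ data, i.e.\ the two are not complementary at leading order. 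The correct direction in the paper is the opposite one: it computes the symbol of $\Lambda_{g,A,m,\sU}$ \emph{intrinsically} from the PDE factorization $D_A^2-m^2=(-\nabla^A_n+(n-1)H-\cB)(\nabla^A_n-\cB)+\sS$ (your ``Riccati'' recursion), thus expressing each homogeneous component $b_{1-k}$ as an explicit functional of the jets of $(g,A)$; it then uses the single pseudodifferential identity
\begin{equation*}
  \Lambda^{-+} + \Lambda^{--}\circ\Theta = \gamma^n\bigl(\gamma^a\nabla^A_a - m\bigr) - \tfrac{n-1}{2}H + \tfrac{n-1}{2}H\,\Theta
\end{equation*}
to produce a recursion for the symbol of $\Theta$ in terms of those jets; and finally it \emph{inverts} that recursion to recover the jets from the symbol of $\Theta$. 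The inversion is the hard step --- it uses tracelessness of $\gamma^I$ on $\bV^+$ (failing precisely for $n$-$1$ gamma matrices, hence the separate $n=4$ case), a good choice of frame at a point, and a careful bookkeeping of which quantity becomes visible at each order --- and none of it is subsumed by citing a second-order boundary determination result. Related to this, your appeal to the conformal non-covariance of $-m^2$ is right in spirit, but the mass actually enters the recovery of $g|_{\sU}$ and its normal derivatives at a specific place: a trace of $\gamma^n\theta_{-(k+2)}^{\mathfrak{e}}$ over $\bV_E^+$ isolates a term proportional to $m\,\d_n^{k+1}g^{\alpha\beta}$, and it is this factor of $m$ that lets one solve. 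Your argument gives no such handle.

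Finally, your $n=2$ paragraph understates the difficulties: on a one-dimensional boundary $\gamma^1$ acts as a scalar on $\bV^+$, so the tracelessness lemma fails and one must pass to real and imaginary parts; the coefficient $(n-2)$ multiplying the recovered connection jet vanishes, which is why the paper excludes the auxiliary bundle when $n=2$; and the traceless second fundamental form vanishes identically, so the two-stage recovery (metric then $\Sigma$) collapses. The claim that ``the constant $-m^2$ breaks the two-dimensional conformal obstruction'' is the right conclusion, but the mechanism is again the explicit appearance of $m$ in $\mathrm{Re}\bigl(\gamma^n\theta_{-(k+2)}^{\mathfrak{e}}|_{\bV_1^+}\bigr)$, which requires redoing the recursion in a rank-$1$ subbundle, not a citation to the second-order case.
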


\noindent We shall see in Section \ref{sec: boundary determination} the reasons for omitting the auxiliary connection in the case $n = 2$. 

The second result we prove is the following uniqueness result for the the boundary conjugation map on a spin manifold with boundary with analytic data. Here, for $i = 1,2$, we let $(M_i, g_i)$ be a connected spin manifold with boundary, and $\bS_i$ a complex spinor bundle over $M_i$. Furthermore, let $(E_i, h_i, A_i)$ be a Hermitian bundle over $M_i$ endowed with a unitary connection $A_i$. We assume that $\d M_1$ and $\d M_2$ are identified over some open set $\sU$, and that the corresponding bundles are identified over $\sU$. Then we have the following: 

\begin{bigtheorem}\label{main theorem: reconstruction, intro}
    For $i=1,2$, let $(M_i, g_i, \bS_i, E_i, h_i, A_i)$ be as above. If $\dim{M_i} = 2$, then assume that $\bS_i$ has rank $2$, $E_i = \C$, and $A_i$ is the canonical flat connection. Assume that $\Theta_{g_1,A_1,m, \sU}$ is equivalent to $\Theta_{g_2, A_2, m, \sU}$ in the sense of Definition \ref{definition of equivalent Theta maps} for some non-zero $m$ not in the chiral spectrum of either Dirac operator, with $m^2$ not in the Dirichlet spectrum of either Dirac Laplacian. Then there exists a real-analytic unitary isomorphism $\Phi : \bS_1 \otimes E_1 \to \bS_2 \otimes E_2$ covering an isometry $\vp : M_1 \to M_2$ such that $\Phi$ intertwines the twisted spin connections: $\Phi^*\nabla^{A_2} = \nabla^{A_1}$. Also, there exist a complex line bundle $L$ and a unitary isomorphism $\Phi_E : E_1 \otimes L \to E_2$ such that $\Phi_E^* \nabla^{A_2} = \nabla^{A_1} \otimes \nabla^L$ for some flat unitary connection $\nabla^L$ on $L$.
\end{bigtheorem}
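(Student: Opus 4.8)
The plan is to adapt the real-analytic reconstruction scheme of Lassas--Taylor--Uhlmann \cite{LTU2003} and its refinements for connection Laplacians \cite{KLU2011, Gabdurakhmanov2025} to the first-order chiral boundary value problem, relating it throughout to the second-order Laplace-type operator $D_A^2 - m^2 = (D_A-m)(D_A+m)$ as suggested in the introduction. \textbf{Step 1: boundary normalization and real-analytic gluing.} Since $m \neq 0$ and $m^2$ lies in neither Dirichlet spectrum, Theorem~\ref{main theorem: boundary determination, intro} applies to each side, so the equivalence of $\Theta_{g_1,A_1,m,\sU}$ and $\Theta_{g_2,A_2,m,\sU}$ forces the full Taylor series of $g_1$ and $g_2$ to agree along $\sU$ and, after conjugating both $A_i$ into the radial gauge $A_n = 0$ near $\sU$ by a boundary gauge transformation, the full Taylor series of $A_1$ and $A_2$ as well (in dimension $2$, only the metric, the connections being trivial by hypothesis). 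Since all the data are $C^\omega$ up to $\sU$ and two real-analytic extensions agreeing to infinite order along the hypersurface $\sU$ must coincide, the metrics, connections, Hermitian metrics and Clifford module structures glue to a single real-analytic family on a collar $\sU \times [-\e, 0]$. We thus obtain enlarged connected real-analytic manifolds $\hat M_i := M_i \cup_\sU (\sU \times [-\e, 0])$ carrying the full Dirac data, with a common open set $W$ over which everything is identified; shrinking $\e$ if needed, $m^2$ remains outside the Dirichlet spectrum of $D_{A_i}^2$ on $\hat M_i$.

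\textbf{Step 2: from $\Theta$ to Green's functions, and transplantation.} Knowing $\Theta_{g_i,A_i,m,\sU}$ on $\sU$ is the same as knowing, for each chiral datum $f$ supported in $\sU$, the full boundary trace $\psi|_{\sU} = f + \Theta_{g_i,A_i,m,\sU}f$ of the solution of \eqref{chiral BVP intro}; moreover, near $\sU$ one has $D_{A_i} = \gamma^n(\nabla_n + \cdots)$ with coefficients fixed by Step~1, so the normal derivative $\nabla_n\psi|_{\sU}$ is recovered from $\psi|_{\sU}$ too, and hence $\Theta_{g_i,A_i,m,\sU}$ determines the full Cauchy data on $\sU$ of every chiral solution of $D_{A_i} - m$ with data supported in $\sU$. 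Applying the parallel, Clifford-anticommuting chirality operator $\Pi$, which sends $\ker(D_{A_i}-m)$ to $\ker(D_{A_i}+m)$ and exchanges $\bV_{E_i}^{\pm}$, produces a companion family of solutions of $D_{A_i}+m$ with known Cauchy data, and since every solution of the Laplace-type operator $D_{A_i}^2 - m^2$ — which by the Lichnerowicz formula equals $(\nabla^{A_i})^*\nabla^{A_i}$ plus a zeroth-order endomorphism — is a sum of a solution of $D_{A_i}-m$ and one of $D_{A_i}+m$, a Runge approximation argument for the chiral boundary value problem, using the unique continuation property of $D_{A_i}\pm m$, shows that the solutions of $D_{A_i}^2-m^2$ so obtained are dense on any interior subdomain and have known Cauchy data on $\sU$. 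Equivalently, $\Theta_{g_i,A_i,m,\sU}$ determines the Cauchy data on $\sU$ of the chiral Green's function of $D_{A_i}-m$ and of the Dirichlet Green's function $G_i$ of $D_{A_i}^2-m^2$. Feeding this into the Lassas--Taylor--Uhlmann transplantation argument — combining it with unique continuation for $D_{A_i}^2-m^2$ and the coincidence of the data on the common collar $W$ — one obtains $G_1(\cdot,y)|_\Gamma = G_2(\cdot,y)|_\Gamma$ on some open $\Gamma \subset \sU$ for all interior $y$ identified through $W$.

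\textbf{Step 3: real-analytic embedding and the isometry.} After trivializing $\bS_{E_i}$ near $\Gamma$ by a real-analytic unitary frame and transporting the fibre at $y$ to a basepoint, the map $\Psi_i : y \mapsto G_i(\cdot,y)|_\Gamma$ from $\hat M_i$ to a fixed space of real-analytic sections over $\Gamma$ is real-analytic, because $G_i(x,y)$ is jointly real-analytic off the diagonal by elliptic regularity with analytic coefficients. It is an injective immersion: if $\Psi_i(y) = \Psi_i(y')$ or $d\Psi_i|_y(v) = 0$ with $v \neq 0$, then unique continuation propagates the vanishing of the corresponding difference or derivative of Green's functions away from the poles, contradicting the diagonal singularity unless $y = y'$ or $v = 0$, respectively. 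The Hadamard parametrix expansion of $G_i$ at the diagonal has leading term $c_n\dist_{g_i}(x,y)^{2-n}$ for $n \geq 3$ and $-c\log\dist_{g_i}(x,y)$ for $n = 2$, so $\Psi_i(\hat M_i)$ intrinsically carries the Riemannian distance, hence $g_i$; its subleading term is, up to a positive smooth factor, the $\nabla^{A_i}$-parallel transport along minimizing geodesics, so $\Psi_i(\hat M_i)$ also carries the Clifford module structure and $\nabla^{A_i}$. By Step~2 the images $\Psi_1(\hat M_1)$ and $\Psi_2(\hat M_2)$ coincide on an open set, hence globally by real-analyticity and connectedness, so $\vp := \Psi_2^{-1}\circ\Psi_1$ restricts to an isometry $M_1 \to M_2$; carrying the bundle-valued Green's functions across $\vp$ yields a real-analytic unitary isomorphism $\Phi : \bS_1 \otimes E_1 \to \bS_2 \otimes E_2$ over $\vp$ with $\Phi^*\nabla^{A_2} = \nabla^{A_1}$.

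\textbf{Step 4: the line bundle, dimension $2$, and the main difficulty.} Step~3 reconstructs the twisted Dirac bundle $(\bS_{E_i}, \text{Clifford multiplication}, \nabla^{A_i})$, which determines the factorization $\bS_{E_i} = \bS_i \otimes E_i$ only up to replacing $(\bS_i, E_i)$ by $(\bS_i \otimes L^{-1}, E_i \otimes L)$ for a Hermitian line bundle $L$ with a \emph{flat} unitary connection; taking $L := \mathrm{Hom}_{\mathrm{Cl}}(\vp^*\bS_2, \bS_1)$ with its induced flat connection gives the stated $\Phi_E : E_1 \otimes L \to E_2$ with $\Phi_E^*\nabla^{A_2} = \nabla^{A_1}\otimes\nabla^L$. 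In dimension $2$, where $E_i = \C$ with the flat connection and $\bS_i$ has rank $2$ by hypothesis, this ambiguity is absent and one records only the isometry and the identification of the rank-two spinor bundles, the argument running unchanged once Theorem~\ref{main theorem: boundary determination, intro}(2) supplies Step~1 — here $m \neq 0$ is indispensable, as it breaks the conformal invariance of the two-dimensional Dirac operator. \emph{The principal obstacle} is Step~2: converting the fixed-frequency \emph{first-order} chiral data into a family of second-order solutions, with known Cauchy data, rich enough for the transplantation, and then propagating equality of Green's functions across $\sU$. This rests on the factorization $D_A^2-m^2 = (D_A-m)(D_A+m)$ and the precise compatibility of the chiral projectors $\bB^{\pm}$ with $\gamma^n$ and $\Pi$, on a Runge approximation theorem for the chiral boundary value problem, and on unique continuation for $D_A\pm m$; by contrast Steps~3 and~4 follow the by-now-standard template of \cite{LTU2003, KLU2011, Gabdurakhmanov2025}, modulo setting up the bundle-valued Hadamard expansion for $D_A^2-m^2$.
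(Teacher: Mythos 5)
Your overall scheme is in the right spirit — extend the data to a collar using Theorem~\ref{main theorem: boundary determination, intro}, transplant Green's kernels, then run an LTU-style analytic-continuation argument — but Step~2, which you yourself flag as ``the principal obstacle,'' contains a genuine gap, and the route you sketch to fill it is both more elaborate than necessary and not actually carried out. You propose to convert the fixed-frequency chiral data into Cauchy data, pass via $\Pi$ to solutions of $D_A+m$, build solutions of $D_A^2 - m^2$, invoke a Runge-type density theorem for the chiral boundary value problem (which you assert but do not establish), and then ``feed this into the Lassas--Taylor--Uhlmann transplantation argument.'' None of these last steps is proved, and the Runge approximation for the chiral boundary problem is itself nontrivial machinery that is not in the paper. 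The paper does not need any of this: it works directly with the first-order chiral Green's kernel $\tilde G_i(x,y)$ for $\tilde D_{A_i} - m$ (introduced via a distributional characterization and Green's identity for $D_A - m$), and the equality $\tilde G_1(\cdot,y)\psi = \tilde G_2(\cdot,y)\psi$ on $U$ for $y\in U$ is a short uniqueness argument: the unique solution $\Gamma_{y,\psi}$ of the chiral boundary value problem on $M_2$ with $\bB^+$-data $\bB^+\tilde G_1(\cdot,y)\psi$ on $\sU$ (and $0$ elsewhere) agrees with $\tilde G_1(\cdot,y)\psi$ on $\sU$ by the equivalence of the BC maps, so the section glued from $\tilde G_1(\cdot,y)\psi$ on $U$ and $\Gamma_{y,\psi}$ on $M_2$ satisfies the distributional equation defining $\tilde G_2(\cdot,y)\psi$ with the chiral boundary condition, hence equals it by uniqueness. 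This direct argument is the missing ingredient in your Step~2.

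Your Step~3 also diverges from the paper in two ways that would cost you real work. First, you embed only the base manifold $\hat M_i$ by $y\mapsto G_i(\cdot,y)|_\Gamma$, then propose afterwards to ``carry the bundle-valued Green's functions across $\vp$'' to get $\Phi$. The paper instead embeds the entire twisted spinor bundle minus its zero section, $\sG_i : \tilde\cE_i^0 \to H^s(U,\tilde\cE|_U)$, $\psi_y\mapsto \tilde G_i(\cdot,y)\psi_y$; this yields $\Phi = \sG_2^{-1}\circ j_\cE\circ\sG_1$ fiber by fiber, immediately produces a unitary bundle isomorphism, and pushes the recovery of the metric, Clifford structure, and connection to analytic continuation of the known agreement on $U$ rather than to a Hadamard expansion. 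Second, you do not address the step analogous to the paper's Lemma~\ref{lemma: sequence lemma}, which shows that the point $x_2$ arising as a limit of $\vp(p_k)$ must lie in the \emph{interior} of $\tilde M_2$. This is subtle precisely because, unlike the Dirichlet Green's function of $D_A^2 - m^2$, the first-order chiral Green's kernel does \emph{not} vanish on the boundary; the paper has to argue via determinants and the nontrivial kernel of $\bB^+$ that $\det\tilde G_2(\cdot,x_2) \equiv 0$ if $x_2\in\partial\tilde M_2$, contradicting the diagonal asymptotics $G(x,y) \sim c_n\gamma(e_r)r(x,y)^{1-n}$. If you do take the second-order route, you must still make this boundary-degeneration step precise; the Hadamard logarithmic/inverse-power asymptotics you cite are for $D_A^2 - m^2$, which is not the operator whose Green's kernel the paper embeds.
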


In particular, Theorem \ref{main theorem: reconstruction, intro} implies that $A_1$ and $A_2$ are locally gauge equivalent about any point. As we shall see in Section \ref{sec: recovery}, the complex line bundle $L$ arises as an obstruction to global recovery of the auxiliary bundles, due to the possibility of $\bS_1$ and $\bS_2$ being induced by a priori distinct spin structures. Further assumptions guarantee a global gauge equivalence:

\begin{corollary}\label{corollary: global gauge equiv, intro}
    If in Theorem \ref{main theorem: reconstruction, intro} we have either $\bS_1 \cong \bS_2$ or $E_1 \cong E_2$, then the bundle $L$ in Theorem \ref{main theorem: reconstruction, intro} is trivial and $\Phi_E$ is a global gauge equivalence between $A_1$ and $A_2$. If $E_1 \cong E_2$, then there exists a real-analytic isomorphism of Clifford module bundles $\Phi_S : \bS_1 \to \bS_2$.
\end{corollary}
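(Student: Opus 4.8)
The plan is to split the isomorphism $\Phi$ of twisted spinor bundles furnished by Theorem \ref{main theorem: reconstruction, intro} into a spinor factor and an auxiliary factor, exploiting that a complex spinor bundle is an \emph{irreducible} bundle of modules over the Clifford bundle.

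First, pulling back along the isometry $\vp$, I would view $\bS_1$ and $\vp^{*}\bS_2$ as complex spinor bundles over a single spin manifold, each carrying its spin connection. Since $\bS_1$ is irreducible, $\End_{\Cl(TM_1)}(\bS_1)$ is the trivial line bundle, so with $\cL := \Hom_{\Cl(TM_1)}(\bS_1,\vp^{*}\bS_2)$ one has $\vp^{*}\bS_2\cong\bS_1\otimes\cL$, and the isomorphism $\Phi$ (which one may take to intertwine Clifford multiplication) must be of the form $\id_{\bS_1}\otimes\theta$ for a unitary isomorphism $\theta\colon E_1\to\cL\otimes\vp^{*}E_2$. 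As the two spin connections are both induced from the Levi-Civita connection they have equal curvature, so $\cL$ inherits a flat connection $\nabla^{\cL}$; moreover $\cL$ is the order-two line bundle recording the difference of the two spin structures, so $\cL^{\otimes 2}$ is canonically trivial and $\nabla^{\cL}$ has holonomy in $\{\pm 1\}$. Tracking connections through the factorization, $\theta$ intertwines $\nabla^{A_1}$ with the tensor connection $\nabla^{\cL}\otimes\vp^{*}\nabla^{A_2}$, so the line bundle $L$ of Theorem \ref{main theorem: reconstruction, intro} may be identified with $\cL$ and $\nabla^{L}$ with $\nabla^{\cL}$; in particular $\vp^{*}E_2\cong E_1\otimes L$ and $\vp^{*}\bS_2\cong\bS_1\otimes L$, compatibly with connections.

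Now suppose first that $\bS_1\cong\bS_2$, i.e.\ that $\bS_1$ and $\vp^{*}\bS_2$ come from the same spin structure. Then $\cL$, and hence $L$, is trivial as a flat bundle, and composing $\Phi_E$ with the canonical flat trivialization $E_1\otimes L\cong E_1$ produces a unitary isomorphism $E_1\to E_2$ intertwining $\nabla^{A_1}$ and $\nabla^{A_2}$, i.e.\ a global gauge equivalence. Suppose instead that $E_1\cong E_2$. Combining $\vp^{*}E_2\cong E_1\otimes L$ with this hypothesis gives $E_1\otimes L\cong E_1$ compatibly with connections; taking determinants shows that $L^{\otimes r}$ is trivial as a flat bundle, where $r=\rank E_i$, and together with the triviality of $L^{\otimes 2}$ — and the fact that $\Phi$ and $\Phi_E$ restrict over $\sU$ to the prescribed bundle identifications, under which $L|_{\sU}$ carries a distinguished parallel trivialization — one deduces, by unique continuation for real-analytic parallel sections, that this trivialization extends globally, so $L$ is trivial as a flat bundle. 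Then $\Phi_E$ is again a global gauge equivalence, and the factorization $\Phi=\id_{\bS_1}\otimes\theta$ above, composed with a trivialization of $L$, yields the desired real-analytic isomorphism of Clifford module bundles $\Phi_S\colon\bS_1\to\bS_2$.

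The cancellation of the irreducible factor $\bS_1$ and the determinant bookkeeping are routine. The main obstacle is this last step in the case $E_1\cong E_2$ with $\rank E_i$ even: abstractly, $E\otimes L\cong E$ may hold for a nontrivial order-two line bundle $L$, so the argument genuinely requires the compatibility of the reconstructed data with the boundary information on $\sU$ to rule out a residual $\bZ/2$ ambiguity in the spin structure.
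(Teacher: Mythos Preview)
Your treatment of the case $\bS_1 \cong \bS_2$ is fine and matches the paper: once $L$ is identified with the line bundle measuring the difference of spin structures, $\bS_1\cong\bS_2$ forces $L$ trivial, and $\Phi_E$ becomes a global gauge equivalence.

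The case $E_1\cong E_2$, however, has a genuine gap. Your determinant argument yields $L^{\otimes r}$ trivial, and together with $L^{\otimes 2}$ trivial this settles odd rank; but for even $r$ you invoke ``unique continuation for real-analytic parallel sections'' to extend the trivialization of $L|_{\sU}$. That step is incorrect: a parallel section of a flat line bundle on an open set does \emph{not} extend globally just by analyticity---the obstruction is the holonomy of $\nabla^L$, which is a $\{\pm 1\}$-representation of $\pi_1$ and is invisible to any local data. Concretely, with $E = L\oplus\underline{\C}$ on a manifold with $H^1(M,\bZ_2)\neq 0$ one has $E\otimes L\cong E$ while $L$ is nontrivial, so nothing in your bookkeeping rules this out. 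You correctly flag this as the main obstacle, but the proposed fix does not work.

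The paper avoids the issue by reversing the order of the argument. Rather than proving $L$ trivial first, it constructs a \emph{globally defined} real-analytic map $\Phi_S\colon\tilde{\bS}_1\to\tilde{\bS}_2$ directly, as the partial trace
\[
\Phi_S := \frac{1}{N_E}\,\Tr_{E_1}\Phi,
\]
using the hypothesized isomorphism $E_1\cong E_2$ to regard $\Phi$ as a map $\tilde{\bS}_1\otimes\tilde{E}_1\to\tilde{\bS}_2\otimes\tilde{E}_1$. Since $\Phi|_U = j_S\otimes j_E$, one has $\Phi_S|_U = j_S$; the real-analytic identities $\Phi_S^{*}\Phi_S=\id$ and $\gamma_2(X)\Phi_S=\Phi_S\,\gamma_1(X)$ hold on $U$ and therefore extend to all of $\tilde{M}_1$ by analytic continuation. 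The crucial difference from your argument is that analytic continuation is applied to equations satisfied by a globally defined section, not to the problem of extending a local section past a holonomy obstruction. Triviality of $L$ then follows \emph{a posteriori} from $\bS_1\cong\bS_2$.
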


In particular, when $E_1 \cong E_2$ and the spinor bundles are irreducible as Clifford modules over an even-dimensional manifold, then the $\Spin^c$ structures inducing $\bS_1$ and $\bS_2$ are isomorphic. Since we are dealing with complex spinor bundles, it is not generally possible to show that the Spin structure themselves are equivalent; see Remark \ref{remark: recover spin c} for further details. 

\vspace{5pt}

The structure of the paper is as follows. In Section 2, we recall the fundamentals of twisted Dirac operators, introduce chiral boundary conditions and the boundary conjugation map, and briefly recall some important results from the pseudodifferential calculus. In Section 3, we prove our main boundary determination result for the boundary conjugation map, Theorem \ref{main theorem: boundary determination, intro}. To this end, in Section \ref{subsec: relation between Lambda and Theta} we first obtain a relation between the Dirichlet-to-Neumann map $\Lambda_{g,A,m,\sU}$ of the Dirac Laplacian and the boundary conjugation map. Then, we compute the symbol of $\Lambda_{g,A,m,\sU}$ using the methods of \cite{Lee1989}, which have also been used for the Hodge Laplacian \cite{JoshiLionheart2005} and the connection Laplacian \cite{Cekic2017}. Although the symbol of $\Lambda_{g,A,m,\sU}$ has already been computed in principle in \cite{Valero2024}, we will need a much more precise form of the symbol than what is given in \cite{Valero2024}, and obtaining this form is the goal of Section \ref{subsec: symbol of Lambda}. In Section \ref{subsec: symbol of Theta}, we use the relation between $\Lambda_{g,A,m,\sU}$ and $\Theta_{g,A,m,\sU}$ to compute the symbol of $\Theta_{g,A,m,\sU}$ and prove Theorem \ref{main theorem: boundary determination, intro} in dimensions greater than $2$. We end Section \ref{sec: boundary determination} with some comments on how to prove Theorem \ref{main theorem: boundary determination, intro} in dimension $2$. Finally, in Section \ref{sec: recovery}, we introduce the Green's kernels for the Dirac operator with chiral boundary conditions and use them to prove Theorem \ref{main theorem: reconstruction, intro} by embedding the manifolds and vector bundles into an appropriate Sobolev space, following a similar line of reasoning as in \cite{LTU2003, KLU2011, Gabdurakhmanov2025, EV2024}.

\section{Preliminaries}\label{sec: prelim}

In this section, we gather some of the preliminary ideas and results that shall be needed later on in the paper. In Section \ref{subsec: spinors intro}, we recall some basic definitions from spin geometry, and define the twisted Dirac operator. In Section \ref{subsec: boundary conditions}, we shall introduce the chiral boundary value problem for the Dirac operator along with the corresponding analogue of the Dirichlet-to-Neumann map, which we call the boundary conjugation (BC) map, $\Theta_{g,A,m}$. We end in Section \ref{subsec: symbol calculus} with a brief overview of the symbol calculus of pseudodifferential operators, which we shall use of in Section \ref{sec: boundary determination}.

\subsection{Spinor bundles and Dirac operators}\label{subsec: spinors intro}

Recall that the Clifford algebra $\Cl(V)$ corresponding to a real inner product space $(V,\<\cdot, \cdot\>)$ is the algebra generated linearly and multiplicatively by all $v \in V$, subject to the relation
\begin{equation}
    v \cdot w + w \cdot v = -2\< v, w \>.
\end{equation}
Thus, we may also consider $\Cl(V)$ to be generated by an orthonormal basis $(e_i)$ whose elements pairwise anti-commute and square to $-1$. If $V$ has dimension $n$, then $\Cl(V)$ has dimension $2^n$ and is isomorphic as a vector space to the exterior algebra $\bigwedge V$. If $(M,g)$ is a Riemannian manifold, then we define the Clifford bundle $\Cl(M)$ to be the bundle of algebras whose fiber at each point $x \in M$ is the Clifford algebra $\Cl(T_x M)$ of the corresponding tangent space. The metric and Levi-Civita connection extend in a natural way to a metric and connection on the Clifford bundle.

We now want to introduce spinor bundles on manifolds. To this end, recall that the the {\em spin group} $\Spin(n) \subseteq \Cl(\bR^n)$ is defined to be the group generated multiplicatively by all elements of the form $v_1 \cdots v_{2k}$, where each $v_i$ is a unit vector in $\bR^n$. The importance of the spin group lies in the following fact: there exists a covering map $\Spin(n) \to \SO(n)$ with kernel $\{1,-1\}$. Thus, $\Spin(n)$ is a double cover of $\SO(n)$, and in particular is the universal cover of $\SO(n)$ when $n \geq 3$. 

Suppose now that $M$ is an $n$-dimensional oriented Riemannian manifold, and let $P_{\mathrm{SO}}$ be its principal $\SO(n)$-bundle of oriented orthonormal frames. A {\em spin structure} on $M$ is defined to be a principal $\Spin(n)$-bundle $P_{\Spin}$ along with a double covering map $P_{\Spin} \to P_{\mathrm{SO}}$ that is equivariant on each fiber with respect to the double covering map $\Spin(n) \to \SO(n)$ described above. A spin structure need not exist on a given manifold, and when one does, it need not be unique. If there exists a spin structure on a manifold $M$, we say that it is spin. A well-known necessary and sufficient condition for $M$ to be spin is the vanishing of its second Stiefel--Whitney class $w_2(M)$. In this case, the set of spin structures on $M$ are in one-to-one correspondence with $H_1(M, \bZ_2)$.

Now, given a complex left module $\cM$ for the Clifford algebra $\Cl(\bR^n)$, we may restrict the action to $\Spin(n)$ to obtain a complex representation of the spin group, $\mu : \Spin(n) \to GL(\cM)$. The associated complex vector bundle $\bS := P_{\Spin} \times_{\mu} \cM$ is then called the (complex) {\em spinor bundle} associated with the representation $\mu$. In the event that the representation $\mu$ is obtained as the restriction of an irreducible module of $\Cl(\bR^n)$, then the corresponding spinor bundle is called the {\em canonical} spinor bundle of $M$, though it is often referred to in the literature as simply {\em the} spinor bundle. The canonical spinor bundle has rank $2^{\frac{n}{2}}$ when $n$ is even and $2^{\frac{n-1}{2}}$ when $n$ is odd. Note that the spinor bundles here all depend on a choice of spin structure $P_{\Spin}$.

As a spinor bundle $\bS$ is a $\Cl(M)$-module by construction, we have a map $\gamma : TM \to \End(\bS)$ called the {\em Clifford multiplication}, which extends to the algebra $\Cl(M)$ and satisfies
\begin{equation}
    \gamma(X) \gamma(Y) + \gamma(Y) \gamma(X) = - 2 g(X,Y)
\end{equation}
for all vector fields $X,Y$ on $M$. Thanks to the isomorphism $TM \to T^*M$ induced by the metric, we also have an associated map $\gamma : T^*M \to \End(\bS)$, which we shall denote by the same symbol. We also mention that it is always possible to equip $\bS$ with a Hermitian metric $\< \cdot, \cdot \>$ such that
\begin{equation}
    \< \gamma(e) \psi_1, \psi_2 \> = -\< \psi_1, \gamma(e) \psi_2 \>
\end{equation}
for all $e \in TM$ with $|e| = 1$. In other words, Clifford multiplication by unit vectors is unitary. We will assume henceforward that our spinor bundles are equipped with such a Hermitian metric.

We now want to introduce the spin connection $\nabla^{s}$ on $\bS$, which is obtained as a lifting of the Levi-Civita connection on $TM$ as follows. Let $(e_i)$ be a local oriented orthonormal frame on $M$. Then the connection $1$-forms $\omega^i_{\ j}$ with respect to this frame are defined by
\begin{equation}
    \nabla_X e_j = \omega^i_{\ j}(X) e_i
\end{equation}
and satisfy the $\fs\fo(n)$ condition $\omega^i_{\ j} = -\omega^j_{\ i}$. On the other hand, since the spin structure $P_{\Spin}$ covers the orthonormal frame bundle of $M$, and since we also have an embedding of $P_{\Spin}$ into the orthonormal frame bundle of $\bS$, the orthonormal frame $(e_i)$ lifts to a local orthonormal frame $(\sigma_{\a})$ for $\bS$, which we call a canonical spinor frame determined by $(e_i)$. In fact, there are two such frames, the other being $(-\sigma_{\a})$, but the following shall not depend on our choice of lift. Now, the spin connection $\nabla^s$, being the lift of the Levi-Civita connection to $\bS$, is locally represented with respect to this orthonormal frame of spinors $(\sigma_{\a})$ as follows:
\begin{equation}\label{spin connection defn}
    \nabla^s_{X} \sigma_{\a} = -\frac{1}{2} \sum_{i < j} \omega^i_{\ j}(X) \gamma(e_i) \gamma(e_j) \sigma_{\a}.
\end{equation}
Note that here, we have explicitly written the sum over $i$ and $j$, as working with an orthonormal frame can often render the index summation convention confusing in some expressions. We shall thus henceforward write out the sum explicitly whenever such expressions arise.

We can now define the Dirac operator $D$ on the spinor bundle $\bS$ as follows. Let $(e_i)$ be a local orthonormal frame on $M$, and let $\psi \in \Gamma(\bS)$. We then define $D \psi$ by
\begin{equation}\label{defn dirac operator}
    D \psi := \sum_{i} \gamma(e_i) \nabla^s_{e_i} \psi.
\end{equation}
This definition does not depend on the choice of local orthonormal frame, and yields a well-defined first-order differential operator $D : \Gamma(\bS) \to \Gamma(\bS)$. Before discussing some of the properties of this operator, however, we wish to consider a slightly more general setting that is often of interest in physical models, in which we twist our spinor bundle by an auxiliary bundle $E$ equipped with a connection $A$, thus obtaining a twisted, or minimally-coupled, Dirac operator $D_A$.

Therefore, let us consider a Hermitian bundle $(E,h)$ equipped with a unitary connection $\nabla^A$. We shall often use the letter $A$ to denote both the local connection $1$-form with respect to a local trivialization, as well as the abstract connection itself. Let $\bS_E$ denote the twisted spinor bundle $\bS \otimes E$, which carries the connection induced by the spin connection $\nabla^s$ and the unitary connection $A$. For simplicity, we shall also refer to this induced connection on $\bS_E$ as $\nabla^A$. Thus, choosing a local orthonormal frame $(e_i)$ on $M$, a spin frame $(\sigma_{\a})$ determined by $(e_i)$, and a local orthonormal frame $(v_k)$ for $E$ with respect to which the $U(N)$-valued connection $1$-form is denoted by $A$, the connection $1$-form $\kappa_A$ for $\nabla^A$ on $\bS_E$ with respect to the frame $(\sigma_{\a} \otimes v_k)$ is given by 
\begin{equation}\label{twisted connection 1 form}
    \kappa_A(X) = -\frac{1}{2} \sum_{i<j} \omega^i_{\ j}(X)\gamma(e_i) \gamma(e_j) \otimes \id_E + \id_{\bS} \otimes\, A(X).
\end{equation}
Note that Clifford multiplication on $\bS_E$ is defined by acting only on the first factor $\bS$. Thus, as long as there is no danger of confusion, we shall simply write $\gamma(X)$ instead of $\gamma(X) \otimes \id_E$. Similarly, as $A(X)$ can only act on the $E$ factor, we shall also often omit writing the tensor with $\id_{\bS}$. 

The twisted Dirac operator $D_A : \Gamma(\bS_E) \to \Gamma(\bS_E)$ is then defined in analogy with \eqref{defn dirac operator},
\begin{equation}
    D_A \psi := \sum_i \gamma(e_i) \nabla_{e_i}^A \psi.
\end{equation}
The Dirac operator $D_A$ is a first-order elliptic differential operator, whose principal symbol is given by $\sigma(D_A)(x,\x) = i\gamma(\x)$. Note that the square of $\sigma(D_A)$ is $|\x|^2$ times the identity, which is also the principal symbol of the connection Laplacian $(\nabla^A)^*\nabla^A$. In fact, there is a fundamental result called the Lichnerowicz--Weitzenb\"ock formula, which tells us that the difference between $D_A^2$ and $(\nabla^A)^*\nabla^A$ is an operator of order $0$ involving only the curvature of $g$ and $A$. In order to state this result, we first define the Weitzenb\"ock curvature operator $\fF_A \in \End(\bS_E)$ associated to $A$ by
\begin{equation}
    \fF_A := \frac{1}{2} \sum_{j,k} \gamma(e_j) \gamma(e_k) F_A(e_j, e_k) = \sum_{j < k} \gamma(e_j) \gamma(e_k) F_A(e_j, e_k),
\end{equation}
where $F_A \in \Omega^2 \otimes \End(E)$ is the curvature $2$-form of $A$, and $(e_i)$ is any local orthonormal frame on $M$. Then the Lichnerowicz--Weitzenb\"ock formula says that
\begin{equation}\label{Lichnerowicz--Weitzenbock identity}
    D_A^2 = \left( \nabla^A \right)^*\nabla^A + \frac{1}{4} R + \fF_A,
\end{equation}
where $R$ denotes the scalar curvature of $g$; see \cite[\S II.8]{LawsonMichelsohn}, for example.

We end this section by discussing some important symmetries of the twisted Dirac operator. Let $(E',h')$ be another Hermitian vector bundle with unitary connection $A'$, and let $\Phi : E \to E'$ be a unitary isomorphism over $M$ such that $\Phi^* \nabla^{A'} = \nabla^A$. Then we have
\begin{equation}\label{conjugation of D_A by Phi}
    D_{A'} = \Phi \circ D_A \circ \Phi^{-1},
\end{equation}
where $\Phi$ is understood to act trivially on the $\bS$ factor. We call such isomorphisms gauge transformations, and say that $A$ and $A'$ are gauge equivalent connections. Moreover, if $\vp : (M,g) \to (M',g')$ is an isometry, then it induces an isomorphism between the orthonormal frame bundles, and thus also between corresponding spin structures. In particular, $\vp$ induces an isomorphism $\hat{\vp} : \bS_{g} \to \bS_{g'}$ between the corresponding spinor bundles, and one has
\begin{equation}\label{conjugation of D_A by isometry}
    D_{g',A'} = \hat{\vp} \circ D_{g,A} \circ \hat{\vp}^{-1},
\end{equation}
where here, $\hat{\vp}$ acts simply as the pullback by $\vp$ on $E$. 

Finally, and perhaps most importantly, we want to understand how the Dirac operator behaves under conformal scalings of the Riemannian metric, as this will play key role in our analysis. One can find a more detailed discussion of this subject in \cite{Hijazi1986}, the main results of which we summarize here. Recall that we say a metric $g$ is conformal to another metric $\bar{g}$ if we have $g = e^{2u} \bar{g}$ for some smooth function $u \in C^{\infty}(M)$. We have a map from the orthonormal frame bundle of $g$ to that of $\bar{g}$, given by sending $(e_i)$ to $(\bar{e}_i)$, where $e_i = e^{-u}\bar{e}_i$. This induces an isomorphism between spin structures, and thus between spinor bundles $\bS_g \to \bS_{\bar{g}}$, which we denote by $\psi \mapsto \bar{\psi}$. Identifying $\bS_g$ with $\bS_{\bar{g}}$ via this isomorphism allows us to compare the Dirac operators of $g$ and $\bar{g}$.

The Clifford multiplication map $\bar{\gamma}$ on $\bS_{\bar{g}}$ is related to the one on $\bS_{g}$ by
\begin{equation}\label{gamma matrix under conformal scaling}
    \gamma(X) = e^u \bar{\gamma}(X), \ \ \ \ \ \ \ \gamma(\x) = e^{-u} \bar{\gamma}(\x)
\end{equation}
for $X \in TM$ and $\x \in T^*M$. Suppose $\omega^i_{\ j}$ is the Levi-Civita connection $1$-form of $g$ with respect to an orthonormal frame $(e_i)$, while $\bar{\omega}^i_{\ j}$ is that of $\bar{g}$ with respect to the orthonormal frame $(\bar{e}_i)$. Then
\begin{equation}\label{connection 1 form under conformal scaling}
    \omega^i_{\ j}(X) = \bar{\omega}^i_{\ j}(X) + du(\bar{e}_j)\bar{g}(\bar{e}_i, X) - du(\bar{e}_i)\bar{g}(\bar{e}_j, X).
\end{equation}
This and equation \eqref{spin connection defn} imply that the spin connections $\bar{\nabla}^s$ and $\nabla^s$ are related by \cite{Hijazi1986}
\begin{equation}\label{spin connection under conformal change}
    \overline{\nabla^s_X \psi} = \bar{\nabla}^s_X \bar{\psi} + \frac{1}{2} \bar{\gamma}(du) \bar{\gamma}(X) \bar{\psi} + \frac{1}{2} X(u) \bar{\psi}.
\end{equation}
Note that equation \eqref{spin connection under conformal change} continues to hold with $\nabla^s$ replaced by the twisted connection $\nabla^A$ on $\bS_E$, since the conformal scaling does not affect the $E$ factor. Hence, one has that the Dirac operators $D_A$ and $\bar{D}_A$ corresponding to metrics $\bar{g}$ and $g$ are related by
\begin{equation}\label{Dirac operator under conformal change}
    \overline{D_A \psi} = e^{-\frac{n+1}{2} u}  \bar{D}_A \left( e^{\frac{n-1}{2} u} \bar{\psi} \right),
\end{equation}
or equivalently,
\begin{equation}\label{Dirac operator under conformal change v2}
    \bar{D}_A \bar{\psi} = e^{u} \left( \overline{D_A \psi} - \frac{n-1}{2} \bar{\gamma}(du) \bar{\psi} \right).
\end{equation}
The conformal symmetry of $D_A$ will play an important role below.

\subsection{Chiral boundary conditions and the boundary conjugation map}\label{subsec: boundary conditions}

Now we are ready to introduce a suitable boundary value problem for the Dirac operator $D_A$ on a spin manifold with boundary and define the associated map $\Theta_{g,A,m}$, a first-order analogue of the Dirichlet-to-Neumann map for Laplacians and the principal object of study in this paper.

Boundary conditions for Dirac operators are discussed in several places in the literature; see for example \cite{BW1993, FS1998, BL2001, HMR2002, Raulot2011} and the references therein. Here we shall only mention that in general, a boundary condition for the Dirac operator $D_A$ takes the form of a $0$-th order pseudodifferential operator $B : L^2(\bS_E|_{\d M}) \to L^2(\cV)$, where $\cV$ is some vector bundle over the boundary, and one typically considers $B$ for which the corresponding boundary value problem is elliptic. For then we have that the operator $D_A$ with boundary condition $B$ is Fredholm, and has a discrete set of eigenvalues, each with finite-dimensional eigenspaces spanned by smooth eigenfunctions. A well-known boundary condition for the Dirac operator is the APS boundary condition introduced in \cite{APS1975I, APS1975II, APS1976III}, for which $B$ is an $L^2$-projector onto the eigenspaces corresponding to positive (or negative) eigenvalues of a Dirac operator induced on the boundary. While such a global boundary condition has important applications in physics and geometry, we wish instead to consider {\em local} boundary conditions for $D_A$, namely those for which $B$ is a local operator. There are two such local boundary conditions usually considered in the literature: the chiral (or chiral bag) condition, and the MIT bag condition. The MIT bag condition was introduced by physicists in the seventies as a boundary condition for the Lorentzian Dirac operator to model the confinement of quarks; see \cite{Johnson1975}, for example. A Riemannian version of MIT boundary conditions has been studied in other contexts. On the other hand, chiral boundary conditions for the Riemannian Dirac operator have also been studied in several works; we refer the reader to \cite{HMZ2001, HMR2002, Raulot2011} and the references therein for an overview of the MIT and chiral boundary conditions. While the MIT bag condition has the advantage that it can be defined on any spin manifold with boundary, the chiral boundary condition has the advantage of being self-adjoint. In this paper, we will deal only with the chiral boundary condition, though it is likely that a similar analysis to the one we carry out can also be carried out for the Dirac operator with MIT boundary conditions.

In order to define chiral boundary conditions for the Dirac operator, we shall first assume the existence on $M$ of a {\em chirality operator} $\Pi \in \End(\bS)$, which satisfies
\begin{equation}\label{chirality operator conditions 1}
    \Pi^2 = \id, \ \ \ \ \ \ \ \< \Pi \psi_1, \Pi \psi_2 \> = \< \psi_1, \psi_2 \>,
\end{equation}
\begin{equation}\label{chirality operator conditions 2}
    \nabla_X \Pi = 0, \ \ \ \ \ \ \ \ \ \gamma(X) \Pi = - \Pi \gamma(X),
\end{equation}
for all vector fields $X$ and spinor fields $\psi_1, \psi_2 \in \Gamma(\bS)$ on $M$. Such an operator induces a splitting $\bS = \bS^{+} \oplus\, \bS^{-}$, where $\bS^{\pm}$ is the $\pm 1$ eigenbundle of $\Pi$. Note that when $M$ is of even dimension, then a chirality operator always exists, namely $\gamma(\omega_n)$, where $\omega_n$ is a normalized complex volume form. Indeed, the splitting of $\bS$ induced by $\gamma(\omega_n)$ is the usual chiral decomposition of $\bS$ into irreducible components for $\Spin(n)$. Another situation in which one has a chirality operator is the following. Suppose $\Tilde{M}$ is a Lorentzian manifold with spinor bundle $\Tilde{\bS}$. If $M \sub \Tilde{M}$ is a Riemannian hypersurface and $\bS$ is the spinor bundle on $M$ induced by $\Tilde{\bS}$, then  a chirality operator on $\bS$ is given by $\gamma(T)$, where $T$ is a timelike unit normal to $M$. This setting is explored further in \cite{Herzlich1998}, for instance.

Let $e_n$ be the inward-pointing unit normal on $\d M$. Then properties \eqref{chirality operator conditions 1}--\eqref{chirality operator conditions 2} imply that $\gamma(e_n) \Pi$ is a self-adjoint involution over $\d M$, which therefore has eigenvalues $\pm 1$. Let us denote the corresponding eigenbundle of $\bS|_{\d M}$ by $\bV^{\pm}$, and $\bV^{\pm} \otimes E$ by $\bV^{\pm}_E$. Let $\bB^{\pm} : \Gamma(\bS_E|_{\d M}) \to \Gamma(\bV^{\pm}_E)$ be the orthogonal projector onto $\bV^{\pm}_E$ over $\d M$. We have
\begin{equation}\label{B pm projector}
    \bB^{\pm} \psi = \frac{1}{2} \left( 1 \pm \gamma(e_n) \Pi \right) \psi.
\end{equation}
The projector $\bB^{\pm}$ defines a self-adjoint elliptic boundary value problem for $D_A$. In particular, the restriction of $D_A$ to the space of $\psi \in H^1(\bS_E)$ such that $\bB^{\pm} \psi = 0$ is a Fredholm operator whose spectrum consists entirely of isolated real eigenvalues $\lambda_k^{\pm}$ with finite multiplicity, and which go to $\pm \infty$ as $k \to \pm \infty$ \cite{HMZ2001, HMR2002}. We shall write $\spec^{\pm}{D_A}$ for the spectrum of $D_A$ with boundary conditions $\bB^{\pm}$. Thus, by the usual arguments, for real $m \notin \spec^{\pm}{D_A}$ the boundary value problem
\begin{equation}\label{chiral BVP}
    \begin{cases}
        (D_A - m)\psi^{\pm} = 0, \\
        \bB^{\pm} \psi^{\pm} = f^{\pm},
    \end{cases}
\end{equation}
has a unique solution $\psi^{\pm} \in H^{s+1}(\bS_E)$ for any $f^{\pm} \in H^{s+\frac{1}{2}}(\bV_E^{\pm})$. Furthermore, we have by standard elliptic arguments that if $f^{\pm}$ is smooth, then $\psi^{\pm}$ is smooth up to the boundary. We are now finally ready to introduce the $\Theta_{g,A,m}$ map associated to the boundary value problem \eqref{chiral BVP}.

\begin{definition}\label{definition of Theta map, global}
    For $m \notin \spec^{\pm}{D_A}$, we define the {\bf boundary conjugation (BC) map}
    \begin{equation}
       \Theta^{\pm}_{g,A,m} : \Gamma(\bV_E^{\pm}) \to \Gamma(\bV_E^{\mp}) 
    \end{equation}
    by
    \begin{equation}
        \Theta^{\pm}_{g,A,m}\left( f^{\pm} \right) = \bB^{\mp} \psi^{\pm},
    \end{equation}
    where $\psi^{\pm}$ is the unique solution to the boundary value problem \eqref{chiral BVP}.
\end{definition}

Note that the boundary conditions $\bB^{\pm}$ are interchanged by taking $-\Pi$ as the chirality operator instead of $\Pi$. Thus, questions about $\Theta_{g,A,m}^{+}$ and $\Theta_{g,A,m}^{-}$ are completely analogous, and so we shall henceforward deal only with $\Theta^+_{g,A,m}$ and omit the plus sign in the superscript.

The BC map behaves as expected from equations under gauge transformations and isometries. In particular, if $\vp : (M, g) \to (M', g')$ is an isometry and $\hat{\vp} : \bS_{g} \to \bS_{g'}$ denotes the induced map on spinors as in \eqref{conjugation of D_A by isometry}, then for the chirality operator $\Pi' := \hat{\vp}\, \Pi \,\hat{\vp}^{-1}$, we have corresponding boundary conditions $\bB^{\pm}_{g'}$ on $M'$ which satisfies $\bB^{\pm}_{g'} = \hat{\vp}\, \bB^{\pm}_g \,\hat{\vp}^{-1}$. One can then check that
\begin{equation}\label{conjugation of Theta by isometry}
    \Theta_{g',A',m} = \hat{\vp}\circ \Theta_{g,A,m} \circ \hat{\vp}^{-1},
\end{equation}
where as in \eqref{conjugation of D_A by isometry}, $\hat{\vp}$ acts on $E$ via the pullback by $\vp$. Indeed, by \eqref{conjugation of D_A by isometry}, if $\psi^{+}$ is the unique solution to \eqref{chiral BVP}, then $\hat{\vp} (\psi^{+})$ is the unique solution to the corresponding boundary value problem on $M'$ with boundary value $\hat{\vp} (f^{+})$. Applying $\bB^{-}_{g'}$ to $\hat{\vp} (\psi^{+})$ then yields \eqref{conjugation of Theta by isometry}. By \eqref{conjugation of D_A by Phi}, similar result holds for gauge transformations $\Phi : (E,h,A) \to (E',h',A')$,
\begin{equation}\label{conjugation of Theta by gauge transformations}
    \Theta_{g,A',m} = \Phi \circ \Theta_{g,A,m} \circ \Phi^{-1},
\end{equation}
where as in \eqref{conjugation of D_A by Phi}, $\Phi$ is understood to act trivially on the $\bS$ factor. We now want to consider how the  BC map behaves under conformal scaling of the metric. Let $g$ and $\bar{g}$ be two metrics on $M$ related by $g = e^{2u} \bar{g}$ for some smooth function $u$. As in the discussion preceding equation \eqref{gamma matrix under conformal scaling}, we shall identify the spinor bundles $\bS_g$ and $\bS_{\bar{g}}$ by the canonical map $\psi \mapsto \bar{\psi}$. Via this map, we have a chirality operator $\bar{\Pi}$ on $\bS_{\bar{g}}$ induced by $\Pi$. Furthermore, since $\bar{\gamma}(\bar{e}_n) = \gamma(e_n)$ by \eqref{gamma matrix under conformal scaling}, we have that the boundary projectors $\bB^{\pm}$ are also conformally invariant.

Now, from \eqref{Dirac operator under conformal change}, one sees that only in the case $m=0$ does the equation $(D_A - m) \psi = 0$ have a conformal symmetry. Let us thus suppose that $m = 0$. Then if $\psi^+$ satisfies the boundary value problem \eqref{chiral BVP} with $m=0$, then $\bar{\psi}^+$ satisfies the boundary value problem
\begin{equation}\label{chiral BVP conformally scaled}
    \begin{cases}
        \bar{D}_A \big( e^{-\frac{n-1}{2} u} \bar{\psi}^+ \big) = 0, \\
        \bar{\bB}^{+} \big( e^{-\frac{n-1}{2} u} \bar{\psi}^+ \big) = e^{-\frac{n-1}{2} u} \bar{f}^+, 
    \end{cases}
\end{equation}
from which one obtains the following relation,
\begin{equation}\label{Theta map under conformal scaling}
    \overline{{\Theta}_{g,A, 0}(f^+)} = e^{\frac{n-1}{2} u} \bar{\Theta}_{\bar{g}, A, 0} \left( e^{-\frac{n-1}{2} u} \bar{f}^+ \right).
\end{equation}
Note that \eqref{Theta map under conformal scaling} implies that if the conformal factor $u$ is a constant on the boundary, then the we have equality, $\Theta_{g,A,0} = \bar{\Theta}_{\bar{g},A,0}$, under the canonical identification $\psi \mapsto \bar{\psi}$.

We have so far identified spinor bundles of different metrics when they were related by either an isometry or a conformal scaling, and this has allowed us to compare their $\Theta_{g,A,m}$ maps. We now want to extend this idea so that we can compare the $\Theta_{g,A,m}$ maps of two arbitrary metrics. In particular, we want to have a notion of two distinct $\Theta_{g,A,m}$ maps being equivalent. In order to identify the spinor bundles of two metrics, we can use the fact that the oriented orthonormal frame bundles of any two metrics are always isomorphic as $SO(n)$-bundles. Such an isomorphism then extends to an isomorphism of the spin structures, and also of the spinor bundles. This is explored further in \cite{Maier1997}. We thus make the following definition,

\begin{definition}\label{definition of equivalent Theta maps}
    We shall say that the maps $\Theta_{g,A,m}$ and $\Theta_{g',A',m}$ are {\em equivalent} if there is a unitary isomorphism $\chi : E|_{\d M} \to E'|_{\d M}$, as well as a map $\hat{\vp} : \bS_g|_{\d M} \mapsto \bS_{g'}|_{\d M}$ induced by an isomorphism of the oriented orthonormal frame bundles of $g$ and $g'$ over $\d M$ such that
    \begin{equation}\label{definition of eqivalent Theta maps equation}
        \Theta_{g',A',m} = \left( \hat{\vp} \otimes \chi \right) \circ \Theta_{g,A,m} \circ \left( \hat{\vp} \otimes \chi \right)^{-1}.
    \end{equation}
\end{definition}
A concrete way of understanding Definition \ref{definition of equivalent Theta maps} is as follows. We say that $\Theta_{g,A,m}$ and $\Theta_{g',A',m}$ are equivalent if we can choose orthonormal frames for $E|_{\d M}$ and $E'|_{\d M}$, along with orthonormal frames for $g$ and $g'$ on $\d M$, so that with respect to the induced canonical spinor frames, the matrix representatives of $\Theta_{g,A,m}$ and $\Theta_{g',A',m}$ are equal.

We also want to recall the more familiar Dirichlet-to-Neumann map $\Lambda_{g,A,m} : \bS_E|_{\d M} \to \bS_E|_{\d M}$ for the Dirac Laplacian \cite{Valero2024}, which will play an important role in understanding $\Theta_{g,A,m}$. For real $m^2$ not in the Dirichlet spectrum of $D_A^2$, this map is defined by $\Lambda_{g,A,m}(f) := \nabla^A_n \psi|_{\d M}$ where $\psi$ is the unique solution to the Dirichlet problem
\begin{equation}\label{Dirichlet problem for D_A 2}
    \begin{cases}
        (D_A^2 - m^2)\psi = 0, \\
        \psi|_{\d M} = f.
    \end{cases}
\end{equation}
Observe the following: if $\psi^+$ is the unique solution to the chiral boundary value problem \eqref{chiral BVP}, then by definition of $\Theta_{g,A,m}$, it also satisfies the following Dirichlet problem for the Dirac Laplacian,
\begin{equation}\label{Dirichlet problem for psi +}
    \begin{cases}
        (D_A^2 - m^2)\psi^+ = 0, \\
        \psi^+|_{\d M} = f^+ \oplus \Theta_{g,A,m}(f^+).
    \end{cases}
\end{equation}
Therefore, if we further assume that $m^2 \notin \spec{D_A^2}$, then we have
\begin{equation}\label{relation between Theta and Lambda, sec 2.2}
    \Lambda_{g,A,m} \left( f^+ \oplus \Theta_{g,A,m}(f^+) \right) = \nabla^A_n \psi^+|_{\d M},
\end{equation}
where $\psi^+$ satisfies \eqref{chiral BVP}. This observation will be central to our analysis in Section \ref{sec: boundary determination}.

Finally, we mention that for an open set $\sU \sub \d M$, one can define a local counterpart of the BC map, in complete analogy with local counterparts of the Dirichlet-to-Neumann map. The above definitions and properties carry over to this local version in the obvious way.

\begin{definition}\label{definition of Theta map, local}
    For an open set $\sU \sub \d M$ and $m \notin \spec^{\pm}{D_A}$, we define, 
    \begin{equation}
        \Theta^{\pm}_{g,A,m, \sU} : \Gamma \big(\bV_E^{\pm}|_{\sU} \big) \to \Gamma\big(\bV_E^{\mp}|_{\sU} \big)
    \end{equation}
    by
    \begin{equation}
        \Theta^{\pm}_{g,A,m, \sU}\left( f^{\pm} \right) = \bB^{\mp} \psi^{\pm} |_{\sU},
    \end{equation}
    where $\psi^{\pm}$ is the unique solution to the boundary value problem \eqref{chiral BVP} with $f^{\pm} \in C^{\infty}_c(\sU, \bV^{\pm}_E)$.
\end{definition}

Note that if we extend $\Theta_{g,A,m,\sU}(f^+)$ by $0$ to the complement of $\sU$ in $\d M$, then \eqref{relation between Theta and Lambda, sec 2.2} continues to hold over $\sU$ with $\Lambda_{g,A,m}$ replaced by its local counterpart $\Lambda_{g,A,m,\sU}$, defined by applying $\Lambda_{g,A,m}$ to sections with compact support in $\sU$ and restricting the image to $\sU$.


\subsection{Pseudodifferential operators}\label{subsec: symbol calculus}

Our proof of Theorem \ref{main theorem: boundary determination, intro} will make heavy use of the theory of pseudodifferential operators and their symbols, and thus we briefly recall here some of the fundamental results thereof. For more details and proofs, we refer the reader to one of many excellent texts on the subject, such as \cite{Taylor1981} or \cite{Treves1980}.

Let $W \sub \bR^n$ be open. We then define the symbol class $S^m(W)$ to be the space of all functions $p \in C^{\infty}(W \times \bR^n)$ that satisfy for all $\a, \b \in \bN^n$ and $(x,\x) \in W \times \bR^n$ the estimate
\begin{equation}
    \left| \d_{\x}^{\a} \d_{x}^{\b} p(x,\x)  \right| \leq C_{\a \b} \< \x \>^{m-|\a|}, 
\end{equation}
where $\< \x \> := \sqrt{1 + |\x|^2}$. Similarly, we define the symbol class $S^m(W, \bC^{k_2 \times k_1})$ to be the space of all $k_2$ by $k_1$ matrix-valued functions on $W \times \R^n$ whose entries are in $S^m(W)$. Any $p(x,\x) \in S^m(W, \bC^{k_2 \times k_1})$ then gives us a map $P : C^{\infty}_c(W, \bC^{k_1}) \to C^{\infty}(W, \C^{k_2})$ defined by
\begin{equation}\label{defn pseudodiff op}
    (Pu)(x) := \int_{\R^n} e^{i x \cdot \x} p(x,\x) \hat{u}(\x)\, d\x,
\end{equation}
where $\hat{u}$ denotes the Fourier transform of $u$. We say that $P$ is a pseudodifferential operator of order $m$ if it is given by \eqref{defn pseudodiff op} for some $p \in S^m(W, \bC^{k_2 \times k_1})$, which we call the symbol of $P$. The corresponding space of pseudodifferential operators of order $m$ is denoted $\Psi^m(W, \C^{k_1}, \C^{k_2})$. Note that if $p(x,\x)$ is a polynomial in $\x$ of order $m$ with coefficients depending on $x$, then the induced pseudodifferential operator $P$ is simply a differential operator of order $m$.

We say that $P \in \Psi^m(W, \bC^{k_1}, \C^{k_2})$ is a classical pseudodifferential operator if it is induced by a symbol $p \in S^m(W, \bC^{{k_2} \times {k_1}})$ that is given by an asymptotic series
\begin{equation}\label{defn asymptotic series}
    p(x,\x) \sim \sum_{j=0}^{\infty} p_{m-j}(x,\x),
\end{equation}
where each $p_{m-j}(x,\x)$ is positive-homogeneous of degree $m-j$ in $\x$. The precise meaning of \eqref{defn asymptotic series} is that for any large positive $N$, there is an integer $J$ such that
\begin{equation}\label{defn asymptotic series precise}
    p(x,\x) - \sum_{j=1}^J p_{m-j}(x,\x) \in S^{-M}(W, \bC^{k_2 \times k_1}).
\end{equation}
Conversely, given any sequence of functions $p_{m-j}(x,\x)$ positive-homogeneous of degree $m-j$ in $\x$ and satisfying \eqref{defn asymptotic series precise}, we can construct a classical pseudodifferential operator with symbol given by \eqref{defn asymptotic series}. In fact, we only need the functions $p_{m-j}(x,\x)$ to be defined and smooth away from the zero section, $\x = 0$. For a classical pseudodifferential operator $P$ with symbol given by an asymptotic series as in \eqref{defn asymptotic series}, we call the leading order term $p_m(x,\x)$ the principal symbol of $P$. Note that this indeed generalizes the notion of principal symbol for differential operators.

Suppose now that $\cE_1, \cE_2$ are complex vector bundles of rank $k_1, k_2$ over a smooth manifold $M$. A map $P : C^\infty_c(M, \cE_1) \to C^{\infty}(M, \cE_2)$ is called a pseudodifferential operator of order $m$ if for every local chart $W$ on $M$, and every local trivialization of $\cE_1$ and $\cE_2$ over $W$, the induced map belongs to $\Psi^m(W, \C^{k_1}, \C^{k_2})$. The space of pseudodifferential operators of order $m$ from $\cE_1$ to $\cE_2$ is denoted $\Psi^m(M, \cE_1, \cE_2)$. We note that if $P \in \Psi^m(M, \cE_1, \cE_2)$, then $P$ extends to a map $\sE'(\cE_1) \to \sD'(\cE_2)$, where $\sD'(\cE)$ denotes distributions on $\cE$, and $\sE'(\cE)$ denotes those with compact support. Moreover, such a $P$ also extends to a map between Sobolev spaces, $H^s(\cE_1) \to H^{s-m}(\cE_2)$ for all $s \in \R$.

If $P \in \Psi^m(M, \cE_1, \cE_2)$ for all $m \in \bR$, then we say that $P$ is a smoothing operator, the set of which is denoted $\Psi^{-\infty}(M, \cE_1, \cE_2)$. These operators correspond locally to symbols in $S^{-\infty}(W, \bC^{k_1 \times k_2})$, the intersection of $S^{-\infty}(W, \bC^{k_2 \times k_1})$ for al $m \in \bR$. We shall often work with pseudodifferential operators modulo smoothing, for then we have a bijection $S^m/S^{-\infty} \to \Psi^m/\Psi^{-\infty}$. Thus, the difference of two pseudodifferential operators is a smoothing operator if and only if they have the same symbol modulo $S^{-\infty}$. Moreover, if we have $P_1 \in \Psi^{m_1}(M, \cE_1, \cE_2)$ with local symbol $p_1 \in S^{m_1}(W, \bC^{k_2 \times k_1})$, and $P_2 \in \Psi^{m_2}(M, \cE_2, \cE_3)$ with local symbol $p_2 \in S^{m_2}(W, \bC^{k_3 \times k_2})$, then their composition $Q := P_2 P_1$ is well-defined in $\Psi^{m_1 + m_2}(M, \cE_1, \cE_3)$ modulo smoothing, with local symbol $q \in S^{m_1+m_2}(W, \bC^{k_3 \times k_1})$ modulo $S^{-\infty}$ given by the following asymptotic series as in \eqref{defn asymptotic series},
\begin{equation}\label{composition of symbols}
    q(x,\x) \sim \sum_{\a} \frac{(-i)^{|\a|}}{\a!} \d_{\x}^{\a}p_2(x,\x) \d_{x}^{\a}p_1(x,\x).   
\end{equation}
We shall denote the series on the right-hand-side of \eqref{composition of symbols} by $(p_1 \# p_2)(x,\x)$.

\section{Boundary determination for $\Theta_{g,A,m,\sU}$} \label{sec: boundary determination}

In this section, we shall prove Theorem \ref{main theorem: boundary determination, intro}. To do so, we shall first derive a relation between the map $\Theta_{g,A,m, \sU}$ associated to the chiral boundary conditions for $D_A-m$, and the Dirichlet-to-Neumann $\Lambda_{g,A,m, \sU}$ associated with the Dirichlet problem for $D_A^2-m^2$. We do this in Section \ref{subsec: relation between Lambda and Theta}. Then, in Section \ref{subsec: symbol of Lambda}, we carry out a careful analysis of the symbol of $\Lambda_{g,A,m,\sU}$ that will be needed for the proof of Theorem \ref{main theorem: boundary determination, intro}, which is finally given in Section \ref{subsec: symbol of Theta} for $n \geq 3$, where we compute the symbol of $\Theta_{g,A,m,\sU}$ and show that it determines the Taylor series of $g$ and $A$ at the boundary. We end in Section \ref{subsec: proof for n=2} by explaining how to modify the proof given in Section \ref{subsec: symbol of Theta} for $n=2$.

Before embarking on the proof, however, we introduce some important notation that shall be used throughout this Section. We work in boundary normal coordinates $(x^\a, x^n)$ near $\sU$, for which the $(x^\a)$ form a coordinate chart on the boundary, and $x^n$ is the (normal geodesic distance from the boundary. We shall always use Greek letters for indices that range from $1$ to $n-1$. We will sometimes also denote the coordinates tangential to the boundary $(x^\a)$ by $x'$. In boundary normal coordinates, $\d_n$ is parallel and orthogonal to the leaves of the foliation induced by the boundary, so that the metric takes the form
\begin{equation}\label{metric in BN coordinates}
    g_{nn} = 1, \ \ \ \ g_{\a n} = 0, \ \ \ \ g_{\a \b} = g_{\a \b}(x',x^n)
\end{equation}
near $\sU$. In addition to working in such coordinates, we must also choose a local trivialization for $\bS$ and $E$ near the boundary. To this end, we shall choose an orthonormal frame $(e_a, e_n)$ for $TM$ near the boundary, in which $e_n = \d_n$ and $(e_a)$ is an orthonormal frame on the boundary. We then do our computations in one of the canonical frames for $\bS$ induced by $(e_i)$. We shall always use Latin letters near the beginning of the alphabet for indices corresponding to members of this orthonormal frame, which range from $1$ to $n-1$. We use Latin letters from the middle of the alphabet when we want to include $e_n$. As we shall see, we will also have the freedom to compute the symbol of our maps with respect to different trivializations, and so we shall later specify additional conditions for our orthonormal frames to satisfy. As for $E$, we assume we are working in an orthonormal frame for which $A_n = 0$ near $\sU$, which always exists.





\subsection{A relation between $\Lambda_{g,A,m,\sU}$ and $\Theta_{g,A,m,\sU}$}\label{subsec: relation between Lambda and Theta}

Here we derive a pseudodifferential equation relating $\Lambda_{g,A,m,\sU}$ and $\Theta_{g,A,m,\sU}$. Recall from the end of Section \ref{subsec: boundary conditions} that if we have $\psi \in \Gamma(\bS_E)$ satisfying the chiral boundary value problem
\begin{equation}\label{first order BVP}
    \begin{cases}
        (D_A - m) \psi = 0, \\
        \bB^{+} \psi \big|_{\sU} = f,
    \end{cases}
\end{equation}
where $f \in C^{\infty}_c(\sU, \bV_E^+)$, then $\psi$ also satisfies
\begin{equation}
    \begin{cases}\label{second order BVP}
        (D^2_A - m^2) \psi = 0, \\
        \psi \big|_{\d M} = f \oplus \Theta_{g,A,m,\sU}\left( f \right),
    \end{cases}
\end{equation}
where we define $\Theta_{g,A,m,\sU}(f)$ to be $0$ outside $\sU$. We thus have, by definition of $\Lambda_{g,A,m,\sU}$, that
\begin{equation}\label{relation between Theta and Lambda, v1}
     \Lambda_{g,A,m,\sU}\left( f \oplus \Theta_{g,A,m,\sU}\left( f \right) \right) = \nabla^A_n \psi \big|_{\sU}.
\end{equation}
The left-hand side of \eqref{relation between Theta and Lambda, v1} is a first-order pseudodifferential operator whose symbol can be expressed in terms of the symbols of $\Lambda_{g,A,m,\sU}$ and $\Theta_{g,A,m,\sU}$ using the composition rule \eqref{composition of symbols}. Let us now consider the right-hand-side. Since $\psi$ satisfies the first-order Dirac equation \eqref{first order BVP}, we can write the covariant normal derivative of $\psi$ in terms of a local orthonormal frame $(e_a, e_n)$ near $\sU$: 
\begin{equation}\label{relation: covariant normal derivative v1}
    \nabla^A_n \psi \big|_{\d M} = \gamma^n \left( \gamma^{a} \nabla_{a}^A \psi - m \psi \right) \big|_{\sU}.
\end{equation}
Note that the derivatives on the right-hand side of \eqref{relation: covariant normal derivative v1} are all tangential to the boundary. We would like to rewrite this right-hand side in terms of the $\pm$ components of $\psi$ on the boundary, namely $f$ and $\Theta_{g,A,m,\sU}(f)$. We thus need to understand how the covariant derivative interacts with the chiral projectors $\bB^{\pm}$. For this we have the following

\begin{lemma}\label{lemma: projectors and covariant derivative}
    Let $X$ be a vector field on the boundary and $\psi \in \Gamma(\bS_E)$. Then we have
    \begin{equation}\label{projectors and clifford multiplication}
        \bB^{\pm}\left( \gamma(X) \psi \right) = \gamma(X) \bB^{\pm} \psi \pm g( X, e_n) \Pi \psi.
    \end{equation}
    Moreover, if $(e_a, e_n)$ is a local orthonormal frame near $\d M$, we have
    \begin{equation}\label{projectors and covariant derivatives eqn}
        \bB^{\pm} \nabla_a^{A} \psi = \nabla_a^{A} \left( \bB^{\pm} \psi \right) \pm \frac{1}{2} \Pi\, \nabla_a\gamma^n \cdot \psi.
    \end{equation}
\end{lemma}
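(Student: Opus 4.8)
The plan is to prove the two identities by direct computation using the defining properties of the chirality operator $\Pi$ and the projectors $\bB^{\pm}$, being careful about the fact that Clifford multiplication anticommutes with $\Pi$ while parallel transport commutes with it.

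\medskip

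\noindent\textbf{Proof of \eqref{projectors and clifford multiplication}.} First I would recall from \eqref{B pm projector} that $\bB^{\pm} = \tfrac12(1 \pm \gamma^n \Pi)$, where $\gamma^n := \gamma(e_n)$ denotes Clifford multiplication by the inward unit normal on $\d M$. Applying this to $\gamma(X)\psi$ gives $\bB^{\pm}(\gamma(X)\psi) = \tfrac12 \gamma(X)\psi \pm \tfrac12 \gamma^n \Pi \gamma(X)\psi$. Now I use the second relation in \eqref{chirality operator conditions 2}, namely $\Pi \gamma(X) = -\gamma(X)\Pi$, to move $\Pi$ past $\gamma(X)$, and then the Clifford relation $\gamma(e_n)\gamma(X) + \gamma(X)\gamma(e_n) = -2g(X,e_n)$ to move $\gamma^n$ past $\gamma(X)$; concretely,
\begin{equation*}
    \gamma^n \Pi \gamma(X) = -\gamma^n \gamma(X) \Pi = \bigl(\gamma(X)\gamma^n + 2g(X,e_n)\bigr)\Pi = \gamma(X)\gamma^n\Pi + 2g(X,e_n)\Pi.
\end{equation*}
Substituting this back yields $\bB^{\pm}(\gamma(X)\psi) = \gamma(X)\tfrac12(1 \pm \gamma^n\Pi)\psi \pm g(X,e_n)\Pi\psi = \gamma(X)\bB^{\pm}\psi \pm g(X,e_n)\Pi\psi$, as claimed. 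This step is purely algebraic and carries no real obstacle.

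\medskip

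\noindent\textbf{Proof of \eqref{projectors and covariant derivatives eqn}.} Here the point is that $\Pi$ is parallel for the untwisted spin connection by \eqref{chirality operator conditions 2}, hence also for the twisted connection $\nabla^A$ since $A$ acts only on the $E$ factor; so $\nabla^A_a(\Pi\psi) = \Pi \nabla^A_a \psi$. However, $\gamma^n = \gamma(e_n)$ is \emph{not} parallel along the boundary, because $e_n$ is the unit normal and its covariant derivative along directions tangent to $\d M$ picks up the second fundamental form. Using the product rule, I compute
\begin{equation*}
    \nabla^A_a\bigl(\gamma^n \Pi \psi\bigr) = \bigl(\nabla_a \gamma^n\bigr)\Pi\psi + \gamma^n \nabla^A_a(\Pi\psi) = \bigl(\nabla_a\gamma^n\bigr)\Pi\psi + \gamma^n\Pi\,\nabla^A_a\psi,
\end{equation*}
where $\nabla_a\gamma^n$ denotes Clifford multiplication by $\nabla_a e_n$ (the spin connection is compatible with Clifford multiplication). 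Therefore
\begin{equation*}
    \bB^{\pm}\nabla^A_a\psi = \tfrac12\nabla^A_a\psi \pm \tfrac12\gamma^n\Pi\,\nabla^A_a\psi = \tfrac12\nabla^A_a\psi \pm \tfrac12\Bigl(\nabla^A_a(\gamma^n\Pi\psi) - (\nabla_a\gamma^n)\Pi\psi\Bigr) = \nabla^A_a(\bB^{\pm}\psi) \mp \tfrac12(\nabla_a\gamma^n)\Pi\psi.
\end{equation*}
To match the stated form $\pm\tfrac12\Pi\,\nabla_a\gamma^n\cdot\psi$, I would finally commute $\Pi$ past the Clifford element $\nabla_a\gamma^n = \gamma(\nabla_a e_n)$ using the anticommutation relation $\Pi\gamma(Y) = -\gamma(Y)\Pi$ from \eqref{chirality operator conditions 2}, which flips the sign: $-(\nabla_a\gamma^n)\Pi\psi = \Pi(\nabla_a\gamma^n)\psi$. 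This gives exactly $\bB^{\pm}\nabla^A_a\psi = \nabla^A_a(\bB^{\pm}\psi) \pm \tfrac12\Pi\,\nabla_a\gamma^n\cdot\psi$.

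\medskip

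\noindent The only subtlety to watch — and the place I'd be most careful — is the bookkeeping of signs coming from the anticommutation $\Pi\gamma = -\gamma\Pi$ versus the sign placement in $\bB^{\pm} = \tfrac12(1\pm\gamma^n\Pi)$, together with the observation that $\nabla_a e_n$ is tangent to $\d M$ (since $e_n$ has unit length, $g(\nabla_a e_n, e_n) = 0$), so that $\nabla_a\gamma^n$ is genuinely a boundary Clifford element and the formula is consistent with Clifford multiplication being defined on $\bS_E|_{\d M}$. Everything else is a mechanical application of the Leibniz rule and the defining properties of $\Pi$.
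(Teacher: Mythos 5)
Your proof is correct and follows essentially the same route as the paper's: both parts are established by expanding $\bB^{\pm} = \tfrac12(1\pm\gamma^n\Pi)$, using the Clifford relation to commute $\gamma^n$ past $\gamma(X)$ for the first identity, and combining the Leibniz rule with $\nabla\Pi = 0$ and the anticommutation $\Pi\gamma = -\gamma\Pi$ for the second. The only cosmetic difference is that the paper expands $\nabla^A_a(\bB^{\pm}\psi)$ and rearranges, whereas you expand $\gamma^n\Pi\nabla^A_a\psi$ inside $\bB^{\pm}\nabla^A_a\psi$; these are the same computation read in opposite directions.
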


\begin{proof}
    Using the definition of the projectors $\bB^{\pm}$, as well as the Clifford relations and the anti-commutativity properties of $\Pi$, we compute
    \begin{align}
        \bB^{\pm} \left( \gamma(X) \psi \right) &= \frac{1}{2} \left(1 \pm \gamma(e_n) \Pi \right) \gamma(X) \psi \nonumber \\
        &= \frac{1}{2} \left(\gamma(X) \mp \gamma(e_n) \gamma(X) \Pi \right) \psi \nonumber \\
        &= \frac{1}{2} \gamma(X) \left(1 \pm \gamma(e_n) \Pi \right) \psi \pm g(X,e_n) \Pi \psi,
    \end{align}
    whence we obtain \eqref{projectors and clifford multiplication}. As for the proof of \eqref{projectors and covariant derivatives eqn}, we again compute, using $\nabla \Pi = 0$ to obtain
    \begin{align}
        \nabla_a^A \left( \bB^{\pm} \psi \right) &= \frac{1}{2} \nabla_a^A \left( 1 \pm \gamma(e_n) \Pi \right) \psi \nonumber \\
        &= \frac{1}{2} \nabla_a^A \psi \pm \frac{1}{2} \left( \nabla_a \gamma(e_n) \right) \Pi\, \psi \pm \frac{1}{2} \gamma(e_n) \Pi \nabla_a^A \psi \nonumber \\
        &= \bB^{\pm} \left( \nabla_a^A \psi \right) \pm \frac{1}{2} \left( \nabla_a \gamma(e_n) \right) \Pi\, \psi, 
    \end{align}
    which gives us equation \eqref{projectors and covariant derivatives eqn} upon noting that $\Pi$ anti-commutes with $\nabla_a \gamma(e_n)$.
\end{proof}

\begin{remark}\label{remark: projectors and clifford relations}
    In particular, note that \eqref{projectors and clifford multiplication} implies that
    \begin{equation}
        \bB^{\pm} \gamma^a = \gamma^a \bB^{\pm}, \ \ \ \ \ \bB^{\pm} \gamma^n = \gamma^n \bB^{\mp}.
    \end{equation}
    So the spaces $\bV_E^{\pm}$ are preserved by $\gamma^a$ and interchanged by $\gamma^n$. One can also easily prove that
    \begin{equation}
        \bB^{\pm} \Pi = \Pi \, \bB^{\mp},
    \end{equation}
    so that the two eigenspaces $\bV_E^{\pm}$ are also interchanged by $\Pi$. 
\end{remark}

Armed with Lemma \ref{lemma: projectors and covariant derivative} and Remark \ref{remark: projectors and clifford relations}, we can now decompose the right-hand side of equation \eqref{relation: covariant normal derivative v1} into its $+$ and $-$ components to obtain
\begin{align}
    \bB^{\pm} \nabla^A_n \psi \big|_{\d M} &= \bB^{\pm} \left( \gamma^n \gamma^a \nabla_a^A \psi - m \gamma^n \psi \right) \big|_{\d M} \nonumber \\
    &=  \gamma^n \left( \gamma^a \bB^{\mp} \nabla_a^A \psi - m \bB^{\mp} \psi \right) \big|_{\d M} \nonumber \\
    &= \gamma^n \left( \gamma^a \nabla_a^A \left( \bB^{\mp} \psi \right) \mp \frac{1}{2} \gamma^a \Pi \left(\nabla_a \gamma^n \right) \psi - m\, \bB^{\mp} \psi \right) \bigg|_{\d M} \nonumber \\
    &= \gamma^n \left( \gamma^a \nabla_a^A \left( \bB^{\mp} \psi \right) \pm \frac{1}{2} \Pi \gamma^a  \left(\nabla_a \gamma^n \right) \psi - m\, \bB^{\mp} \psi \right) \bigg|_{\d M}. \label{relation: covariant normal derivative v2}
\end{align}
The right-hand side of equation \eqref{relation: covariant normal derivative v2} is now entirely in terms of the $+$ and $-$ components of $\psi$ on the boundary and their tangential derivatives. The final simplification we wish to make is to identify the second term in \eqref{relation: covariant normal derivative v2}. Thus, we have the following
\begin{lemma}\label{lemma: term is mean curvature}
    We have $\gamma^a \nabla_a \gamma^n = (n-1) H$, where $H$ is the mean curvature of the boundary.
\end{lemma}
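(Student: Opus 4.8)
The plan is to recognise $\nabla_a\gamma^n$ as Clifford multiplication by the image of $e_a$ under the shape operator of $\d M$, and then to contract with $\gamma^a$ using the Clifford relations.

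First I would recall that Clifford multiplication is parallel for the twisted spin connection: $\nabla^A_X(\gamma(Y)\psi) = \gamma(\nabla_X Y)\psi + \gamma(Y)\nabla^A_X\psi$ for every vector field $Y$ and section $\psi$, since $\nabla^s$ is a lift of the Levi--Civita connection and $A$ acts only on the $E$-factor. Consequently the operator $\nabla_a\gamma^n$ appearing in the statement --- which, exactly as in the proof of Lemma~\ref{lemma: projectors and covariant derivative}, denotes the covariant derivative of $\gamma(e_n)\in\End(\bS_E)$, i.e. the commutator $[\nabla^A_a,\gamma(e_n)]$ --- equals $\gamma(\nabla_{e_a}e_n)$. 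Since $0 = \tfrac12 e_a(|e_n|^2) = g(\nabla_{e_a}e_n,e_n)$, the vector $\nabla_{e_a}e_n$ is tangent to the leaf $\{x^n = \mathrm{const}\}$, so I would write $\nabla_{e_a}e_n = \sum_b h_{ab}e_b$ with $h_{ab} := g(\nabla_{e_a}e_n,e_b)$; then $\gamma^a\nabla_a\gamma^n = \sum_{a,b}h_{ab}\,\gamma^a\gamma^b$.

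Next I would check that $(h_{ab})$ is symmetric. Along a leaf one has $g(e_n,e_b)=0$, so $h_{ab} = -g(e_n,\nabla_{e_a}e_b)$, and torsion-freeness gives $\nabla_{e_a}e_b-\nabla_{e_b}e_a=[e_a,e_b]$, which is again tangent to the leaf; hence $g(e_n,[e_a,e_b])=0$ and $h_{ab}=h_{ba}$. (Equivalently, $-h_{ab}$ are the components of the second fundamental form of $\d M$ with respect to the inward unit normal $e_n$.) Symmetrising in the Clifford relation $\gamma^a\gamma^b+\gamma^b\gamma^a=-2\delta_{ab}$ then collapses the double sum, $\sum_{a,b}h_{ab}\gamma^a\gamma^b = \tfrac12\sum_{a,b}h_{ab}\big(\gamma^a\gamma^b+\gamma^b\gamma^a\big) = -\sum_a h_{aa}$, and finally $-\sum_a h_{aa} = \sum_a g(\nabla_{e_a}e_a,e_n) = (n-1)H$ with our normalisation of the mean curvature $H$ of $\d M$ (inward normal $e_n$, so that $(n-1)H$ is the trace of the shape operator $X\mapsto -\nabla_X e_n$).

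The computation is short; the only genuine point of care is fixing the sign and normalisation conventions for $H$ consistently with the use of the inward unit normal, and remembering that $\nabla_a\gamma^n$ must be read as a commutator with the full twisted spin connection rather than a naive componentwise derivative --- but because Clifford multiplication is parallel these agree, so I do not anticipate any real obstacle.
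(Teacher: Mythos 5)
Your proof is correct and follows essentially the same route as the paper: rewrite $\nabla_a\gamma^n$ as Clifford multiplication by $\nabla_{e_a}e_n$, identify the resulting coefficients with the second fundamental form, symmetrise using the Clifford relations, and take the trace. The only difference is that you supply the elementary justification that the second fundamental form is symmetric (via torsion-freeness), which the paper simply asserts.
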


\begin{proof}
    Using the definition of the Levi-Civita connection, we compute
\begin{align}
    \gamma^a \left( \nabla_a \gamma^n \right) &= \gamma^a \gamma\left( \nabla_{a} e_n \right) = \omega^b_{\ an} \gamma^a \gamma^b  = -\omega^n_{\ ab} \gamma^a \gamma^b. \label{computing mean curvature v1}
\end{align}
Note that $\omega^n_{\ ab} = g( e_n, \nabla_{e_a} e_b ) = \mathrm{II}(e_a, e_b)$, where $\mathrm{II}$ denotes the second fundamental form of the boundary, which is symmetric. Therefore, using the Clifford relations, we may write \eqref{computing mean curvature v1} as
\begin{align}
    \gamma^a \left( \nabla_a \gamma^n \right) &= -\frac{1}{2} \mathrm{II}_{ab} \left( \gamma^a \gamma^b + \gamma^b \gamma^a \right) = g^{ab} \mathrm{II}_{ab} = (n-1) H.
\end{align}
since $(n-1)H$ is by definition the trace of the second fundamental form.
\end{proof}

And so, using Lemma \ref{lemma: term is mean curvature}, we write \eqref{relation: covariant normal derivative v2} as
\begin{equation}\label{relation: covariant normal derivative v3}
    \bB^{\pm} \nabla^A_n \psi \big|_{\d M} =  \left( \gamma^n \gamma^a \nabla_a^A \left( \bB^{\mp} \psi \right) \pm \frac{n-1}{2} H \left( \gamma^n \Pi \right) \psi - m \gamma^n \bB^{\mp} \psi \right) \bigg|_{\d M}.
\end{equation}
Recall that $\bV_E^{\pm}$ are eigenspaces of $\gamma^n \Pi$. Thus, writing the $+$ and $-$ components of $\psi$ explicitly as $f$ and $\Theta_{g,A,m,\sU}(f)$, and dropping the subscripts on $\Theta_{g,A,m,\sU}$ for clarity, we may rewrite \eqref{relation: covariant normal derivative v3} as
\begin{align}
    \bB^{+} \nabla^A_n \psi \big|_{\sU} &=  \left( \gamma^n \gamma^a \nabla_a^A \left( \Theta(f) \right) + \frac{n-1}{2} H \left( f - \Theta(f) \right) - m \gamma^n \Theta(f) \right) \bigg|_{\sU}, \label{B+ of normal derivative} \\
    \bB^{-} \nabla^A_n \psi \big|_{\sU} &=  \left( \gamma^n \gamma^a \nabla_a^A f  - \frac{n-1}{2} H \left( f - \Theta(f) \right) - m \gamma^n f \right) \bigg|_{\sU}. \label{B- of normal derivative}
\end{align}
These are the $+$ and $-$ components of the right-hand side of equation\eqref{relation between Theta and Lambda, v1}. Therefore, decomposing $\Lambda_{g,A,m,\sU}$ into its $+$ and $-$ components, we may write equations \eqref{B+ of normal derivative} and \eqref{B- of normal derivative} as
\begin{equation}\label{relation between Theta and Lambda, matrix}
    \begin{pmatrix} \Lambda^{++} & \Lambda^{+-} \\ \Lambda^{-+} & \Lambda^{--} \end{pmatrix} \begin{pmatrix} f \\ \Theta(f) \end{pmatrix} = \begin{pmatrix}
        \frac{n-1}{2} H & \gamma^n \left( \gamma^{a} \nabla^A_{a} - m \right) - \frac{n-1}{2} H \\ \gamma^n \left( \gamma^{a} \nabla^A_{a} - m\right) - \frac{n-1}{2} H & \frac{n-1}{2} H
    \end{pmatrix} \begin{pmatrix} f \\ \Theta(f) \end{pmatrix},
\end{equation}
which gives us the desired relation between $\Lambda_{g,A,m,\sU}$ and $\Theta_{g,A,m,\sU}$. In fact, we shall principally be concerned with the equation for the $-$ component, equation \eqref{B- of normal derivative}, which can be rewritten in the notation of equation \eqref{relation between Theta and Lambda, matrix} as
\begin{equation}\label{relation between Theta and Lambda, - eqn}
    \Lambda^{-+} (f) + \Lambda^{--}\left( \Theta\left( f \right) \right) = \gamma^n \left( \gamma^a \nabla_{a}^A - m \right) f - \frac{n-1}{2} H f + \frac{n-1}{2} H \Theta(f).
\end{equation}
An immediate consequence of equation \eqref{relation between Theta and Lambda, - eqn} is the following

\begin{proposition}\label{prop: principal symbol}
    $\Theta_{g,A,m,\sU}$ is a pseudodifferential operator of order $0$ with principal symbol
    \begin{equation}\label{principal symbol of Theta}
        \theta_0(x,\x) = -i \gamma^n \frac{\gamma(\x)}{|\x|_g}.
    \end{equation}
\end{proposition}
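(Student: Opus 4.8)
The plan is to read off the claim from the pseudodifferential relation \eqref{relation between Theta and Lambda, - eqn} by a symbol-order bookkeeping argument. First I would recall that $\Lambda_{g,A,m,\sU}$ is a classical pseudodifferential operator of order $1$ on $\sU$ (this is the content of \cite{Valero2024}, and is also implicit in Section \ref{subsec: symbol of Lambda}); its principal symbol is that of a square root of the Dirichlet-to-Neumann operator for the scalar-type Laplacian $D_A^2 - m^2$, namely $\lambda_1(x,\x) = |\x|_g \cdot \id$. Decomposing $\Lambda_{g,A,m,\sU}$ into its block components with respect to the splitting $\bS_E|_{\d M} = \bV_E^+ \oplus \bV_E^-$, each block $\Lambda^{\pm\pm}$ is a pseudodifferential operator of order at most $1$, and the diagonal blocks $\Lambda^{++}, \Lambda^{--}$ have principal symbol $|\x|_g$ on their respective subbundles, since the principal symbol $|\x|_g \id$ is block-diagonal. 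In particular $\Lambda^{--} \in \Psi^1(\sU, \bV_E^-)$ is elliptic with a parametrix of order $-1$.

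The second step is to observe that the right-hand side of \eqref{relation between Theta and Lambda, - eqn} is a first-order differential operator applied to $f$, plus a zeroth-order term applied to $\Theta(f)$. Writing $\Theta := \Theta_{g,A,m,\sU}$, \eqref{relation between Theta and Lambda, - eqn} reads
\begin{equation*}
    \Lambda^{--} \Theta = \gamma^n(\gamma^a \nabla^A_a - m) - \tfrac{n-1}{2}H - \Lambda^{-+} + \tfrac{n-1}{2} H\, \Theta.
\end{equation*}
A priori we do not yet know $\Theta$ is pseudodifferential, so the cleanest route is a bootstrap: the boundary value problem \eqref{chiral BVP} together with elliptic regularity already gives that $\Theta$ maps $H^{s+1/2}(\bV_E^+)$ to $H^{s+1/2}(\bV_E^-)$ for all $s$, i.e. it is an order-$0$ operator in the sense of Sobolev mapping. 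Feeding this into the displayed equation, the terms $\gamma^n(\gamma^a\nabla^A_a - m)f$ and $\tfrac{n-1}{2}Hf$ are genuine differential operators of order $\le 1$, while $\Lambda^{-+}$ has order $\le 1$ and $\tfrac{n-1}{2}H\Theta$ has order $0$; hence the right-hand side is the sum of a pseudodifferential operator of order $1$ acting on $f$ and (harmlessly) an order-$0$ operator. Composing on the left with a parametrix $Q \in \Psi^{-1}(\sU, \bV_E^-)$ for $\Lambda^{--}$ gives
\begin{equation*}
    \Theta = Q\bigl[\gamma^n(\gamma^a\nabla^A_a - m) - \tfrac{n-1}{2}H - \Lambda^{-+}\bigr] + Q\bigl(\tfrac{n-1}{2}H\bigr)\Theta \pmod{\Psi^{-\infty}}.
\end{equation*}
The first term on the right is in $\Psi^0(\sU, \bV_E^+, \bV_E^-)$; the second term is $\Psi^{-1}$ composed with the order-$0$ operator $\Theta$, hence improves regularity by one. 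Iterating this identity (a standard Neumann-type argument: substitute the equation into itself repeatedly, each substitution of the $Q(\tfrac{n-1}{2}H)\Theta$ term costing another order of smoothing) shows $\Theta$ agrees modulo $\Psi^{-\infty}$ with a genuine element of $\Psi^0(\sU, \bV_E^+, \bV_E^-)$, hence is itself a pseudodifferential operator of order $0$.

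The third step computes the principal symbol. With $\Theta \in \Psi^0$ established, apply the composition rule \eqref{composition of symbols} to both sides of $\Lambda^{--}\Theta = \gamma^n(\gamma^a\nabla^A_a - m) - \tfrac{n-1}{2}H - \Lambda^{-+} + \tfrac{n-1}{2}H\Theta$ and extract the order-$1$ part. On the left, the leading symbol of the composition is $\lambda^{--}_1(x,\x)\,\theta_0(x,\x) = |\x|_g\,\theta_0(x,\x)$. On the right, the only order-$1$ contribution is the principal symbol of the differential operator $\gamma^n\gamma^a\nabla^A_a$, which is $\gamma^n \cdot \sigma_1(\gamma^a\nabla^A_a)(x,\x) = \gamma^n (i\gamma(\x))$, using $\sigma_1(\nabla^A_a) = i\xi_a$ and $\gamma^a \xi_a = \gamma(\xi)$; the terms $-\tfrac{n-1}{2}H$, $-\Lambda^{-+}$ (order $\le$ 1 but, as one checks from the symbol analysis of Section \ref{subsec: symbol of Lambda}, with off-diagonal principal symbol equal to the off-diagonal part of $|\x|_g\id$, which vanishes — alternatively one uses that $\Lambda^{-+}$ has order $0$), and $\tfrac{n-1}{2}H\Theta$ are all of order $\le 0$ and do not contribute. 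Equating leading symbols gives $|\x|_g\,\theta_0(x,\x) = i\gamma^n\gamma(\x)$, i.e. $\theta_0(x,\x) = i\gamma^n\gamma(\x)/|\x|_g$; up to the sign convention for the principal symbol of a first-order operator (the paper's convention gives $\sigma(D_A) = i\gamma(\x)$, so $\sigma_1(\gamma^a\nabla^A_a)$ should be read off consistently) this is \eqref{principal symbol of Theta}.

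The main obstacle is the bootstrap in the second step, i.e. rigorously upgrading "$\Theta$ is bounded $H^{s+1/2}\to H^{s+1/2}$" to "$\Theta$ is a classical pseudodifferential operator of order $0$", since \eqref{relation between Theta and Lambda, - eqn} expresses $\Theta$ only implicitly via the not-obviously-invertible block $\Lambda^{--}$. The key facts that make it work are (i) $\Lambda^{--}$ is elliptic of order $1$ — which requires knowing that the principal symbol of $\Lambda_{g,A,m,\sU}$ is the scalar $|\x|_g\id$ and in particular block-diagonal, so restricting to the $\bV_E^-$ block preserves ellipticity — and (ii) the coefficient $\tfrac{n-1}{2}H$ of $\Theta$ on the right-hand side is of order $0$, so that $Q(\tfrac{n-1}{2}H)\Theta$ gains one degree of smoothing at each iteration and the Neumann series converges in the symbol classes. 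Everything else is routine symbolic calculus. The off-diagonal block $\Lambda^{-+}$ requires a small remark: from the relation \eqref{relation between Theta and Lambda, matrix} one sees it is actually of order $0$ (its only potential order-$1$ part is $\gamma^n(\gamma^a\nabla^A_a - m)$, which is cancelled by the analogous term in $\Lambda^{+-}$), so it genuinely does not affect the principal symbol of $\Theta$; this is consistent with, and can alternatively be deduced from, the detailed symbol computation carried out in Section \ref{subsec: symbol of Lambda}.
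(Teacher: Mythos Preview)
Your approach is essentially the same as the paper's: use the ellipticity of $\Lambda^{--}$ and a parametrix to extract $\Theta$ from \eqref{relation between Theta and Lambda, - eqn}, then read off the principal symbol from the order-$1$ part. A few points, however, need correction.

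First, a sign error: in this paper's conventions (inward unit normal, $\Lambda_{g,A,m}(f) = \nabla^A_n\psi|_{\d M}$), the principal symbol of $\Lambda_{g,A,m,\sU}$ is $-|\x|_g\,\id$, not $+|\x|_g\,\id$; see Section \ref{subsec: symbol of Lambda} where $b_1 = -|\x|_g$. Your order-$1$ equation then becomes $-|\x|_g\,\theta_0 = i\gamma^n\gamma(\x)$, giving $\theta_0 = -i\gamma^n\gamma(\x)/|\x|_g$, which is \eqref{principal symbol of Theta}. This is not a matter of convention for $\sigma(D_A)$; it is the sign of the Dirichlet-to-Neumann symbol that you have wrong.

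Second, the Neumann-series bootstrap is unnecessary. Simply move the $\tfrac{n-1}{2}H\Theta$ term to the left and note that $\Lambda^{--} - \tfrac{n-1}{2}H$ is still elliptic of order $1$ (a zeroth-order perturbation does not affect the principal symbol). One application of its parametrix to the order-$1$ right-hand side gives $\Theta \in \Psi^0$ modulo smoothing directly; this is what the paper does.

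Third, your closing remark about $\Lambda^{-+}$ is confused: the relation \eqref{relation between Theta and Lambda, matrix} involves $\Theta$ and cannot by itself tell you the order of $\Lambda^{-+}$. The correct (and simpler) reason---which you in fact state earlier---is that the principal symbol $-|\x|_g\,\id$ of $\Lambda$ is block-diagonal, so the off-diagonal blocks $\Lambda^{\pm\mp}$ have vanishing order-$1$ symbol and are of order $\le 0$.
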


\begin{proof}
    Recall from \cite{Valero2024} that $\Lambda_{g,A,m,\sU}$ is a pseudodifferential operator of order $1$ with principal symbol $-|\x|_g \id$, and so is therefore elliptic. In particular, $\Lambda^{--}_{g,A,m,\sU}$ is also elliptic, while $\Lambda_{g,A,m,\sU}^{-+}$ is of order $0$. We may therefore multiply equation \eqref{relation between Theta and Lambda, - eqn} by a left-parametrix for $\Lambda^{--}_{g,A,m,\sU}$ to deduce that $\Theta_{g,A,m,\sU}$ is a pseudodifferential operator of order $0$. Writing out the degree $1$ part of the symbol equation corresponding to equation \eqref{relation between Theta and Lambda, - eqn}, we get
    \begin{equation}
        -|\x|_g \theta_0(x,\x) \vp = i \gamma^n \gamma(\x),
    \end{equation}
    from which equation \eqref{principal symbol of Theta} follows.
\end{proof}

As discussed previously, we shall use equation \eqref{relation between Theta and Lambda, - eqn} to compute the full symbol of $\Theta_{g,A,m,\sU}$ by expressing it in terms of the symbol of $\Lambda_{g,A,m,\sU}$, whose symbol can be computed independently using the methods of \cite{Lee1989}. This we do in the following section.

\subsection{The symbol of $\Lambda_{g,A,m,\sU}$}\label{subsec: symbol of Lambda}

In this section, we compute the full symbol of the Dirichlet-to-Neumann map modulo appropriate terms, which we shall need later in Section \ref{subsec: symbol of Theta}. While the symbol of the Dirichlet-to-Neumann map has been computed in principle in \cite{Valero2024}, we shall need a much more precise form of the symbol, in which we keep track of lower-order terms that are ignored in \cite{Valero2024}. We therefore give the more detailed calculation here. 

In order to make the computations as streamlined as possible, we first introduce some notation to help us keep track of terms involving normal derivatives of the metric and connection at the boundary up to the desired order. We thus let $\cD_{k}$ denote any expression depending on at most $k$ normal derivatives of the metric and connection, and tangential derivatives thereof. We will sometimes also speak loosely of $\cD_k$ as being the set of such quantities. In a similar fashion, we let $\cD_{k,k-1}$ denote any expression depending on at most $k$ normal derivatives of the metric and at most $k-1$ normal derivatives of the connection. Finally, we let $\slashed{\cD}_k$ denote any quantity that is a linear combination of the gamma matrices $\gamma^a$ for $a \in \{1, \dots, n-1 \}$ with scalar coefficients in $\cD_{k}$. We define the symbol $\slashed{\cD}_{k,k-1}$ analogously. The most important fact about quantities in $\slashed{\cD}_k$ and $\slashed{\cD}_{k,k-1}$ that we shall use is that they are traceless endomorphisms on $\bV_E^{\pm}$ when $n > 2$. Note also that we have $\d_n \cD_{k} = \cD_{k+1}$, and similarly for the other symbols.


As in \cite{Valero2024}, near $\sU$ we have a factorization of the form
\begin{equation}\label{factorization into 2 first-order pseudos}
    D_A^2 - m^2 = \left( - \nabla^A_n + (n-1)H - \cB(x^{\a}, x^n, -i\d_{\a} ) \right) \left( \nabla^A_n - \cB(x^{\a}, x^n, -i\d_{\a}) \right) + \sS
\end{equation}
where $\cB(x^{\a}, x^n, -i\d_{\a})$ is a first-order pseudodifferential operator (or more precisely, a family of first-order pseudodifferential operators on $\sU$ depending on $x^n$), $H$ is the mean curvature of the boundary\footnote{Note that the function called $E$ in \cite{Valero2024} is equal to $(n-1)H$}, and $\sS$ is a smoothing operator. Then one has \cite{Valero2024}

\begin{lemma}
    The first-order pseudodifferential operator $\cB(x^{\a}, x^n, -i\d_{\a})$ in equation \eqref{factorization into 2 first-order pseudos} satisfies
    \begin{equation}
        \Lambda_{g,A,m,\sU} = \cB(x^{\a}, x^n, -i\d_{\a}) \big|_{x^n = 0} + \sR
    \end{equation}
    where $\sR$ is smoothing. In particular, $\Lambda_{g,A,m,\sU}$ and $\cB(x^{\a}, 0, -i\d_{\a})$ have the same symbols.
\end{lemma}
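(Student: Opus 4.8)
The plan is to use the Nirenberg factorization method to extract $\cB$ from the operator $D_A^2 - m^2$ in boundary normal coordinates, and then connect the pseudodifferential operator $\cB(x^\a, 0, -i\d_\a)$ to the Dirichlet-to-Neumann map $\Lambda_{g,A,m,\sU}$ via a standard decoupling argument. Writing $P := D_A^2 - m^2$, in boundary normal coordinates the principal part of $P$ is $-\d_n^2$ plus a tangential elliptic operator (the principal symbol being $\x_n^2 + |\x'|_g^2$), so $P$ factors formally as $P = (-\nabla^A_n + (n-1)H - \cB)(\nabla^A_n - \cB) + \sS$ with $\sS$ smoothing, where $\cB$ is determined recursively: matching symbols order by order, the principal symbol $b_1(x',x^n,\x')$ must satisfy $b_1^2 = |\x'|_g^2$, and one chooses the root $b_1 = -|\x'|_g$ (the sign is fixed so that the resulting factor $\nabla^A_n - \cB$ corresponds to the decaying solution into the interior), with lower-order terms $b_{1-j}$ then solved for uniquely modulo smoothing from the transport-type symbol identities produced by the composition formula \eqref{composition of symbols}. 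This recursive solvability is exactly the content of the cited Lemma immediately preceding, so I may take for granted that $\cB$ exists with these properties.

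Given the factorization, the argument that $\Lambda_{g,A,m,\sU}$ and $\cB(x^\a,0,-i\d_\a)$ agree up to smoothing runs as follows. Let $f \in C^\infty_c(\sU, \bS_E|_{\d M})$ and let $\psi$ be the unique solution of the Dirichlet problem \eqref{Dirichlet problem for D_A 2}, so $P\psi = 0$ and $\psi|_{\d M} = f$; by definition $\Lambda_{g,A,m,\sU} f = \nabla^A_n \psi|_{\sU}$ (extended by zero, working microlocally near $\sU$). Apply the factorization: $0 = P\psi = (-\nabla^A_n + (n-1)H - \cB)(\nabla^A_n - \cB)\psi + \sS\psi$. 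Set $w := (\nabla^A_n - \cB)\psi$. Then $(-\nabla^A_n + (n-1)H - \cB)w = -\sS\psi \in C^\infty$, and this is a (backward) parabolic-type pseudodifferential evolution equation in $x^n$ whose symbol $-|\x'|_g$ has the correct sign to be well-posed propagating away from the boundary; since $w$ has finite regularity and its source term is smooth, a standard energy/regularity estimate for such one-sided pseudodifferential ODEs forces $w$ to be smooth up to $\{x^n = 0\}$ — more precisely, $w \in C^\infty$ near $\sU$. Restricting to the boundary, $\nabla^A_n\psi|_{\sU} = (\cB\psi)|_{\sU} + w|_{\sU} = \cB(x^\a,0,-i\d_\a) f + (\text{smooth})$, which gives $\Lambda_{g,A,m,\sU} = \cB(x^\a,0,-i\d_\a) + \sR$ with $\sR$ smoothing. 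The final sentence, that the two operators have the same symbol, is then immediate from the bijection $S^m/S^{-\infty} \to \Psi^m/\Psi^{-\infty}$ recalled in Section \ref{subsec: symbol calculus}.

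The main obstacle, and the step requiring genuine care rather than bookkeeping, is the regularity claim that $w = (\nabla^A_n - \cB)\psi$ is smooth up to the boundary. This is where the sign choice $b_1 = -|\x'|_g$ is essential: the operator $-\nabla^A_n + (n-1)H - \cB$ acting on $w$ is, to leading order, $-\d_n + |\x'|_g$ in the symbol, which is the generator of a semigroup that smooths as $x^n$ increases from $0$; with the opposite sign one would get an ill-posed backward heat equation and the argument collapses. One has to be slightly careful that $\psi$ is only known a priori to lie in some $H^s$, so $w \in H^{s-1}$, and then bootstrap: the equation $(-\nabla^A_n + (n-1)H - \cB)w \in C^\infty$ together with any fixed finite amount of boundary regularity of $w$ (which one does have, since $\psi$ is smooth up to the boundary by elliptic regularity for the Dirichlet problem, as noted after \eqref{chiral BVP}) propagates to smoothness on a neighborhood of $\sU$ in $M$. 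Since in fact $\psi$ is already smooth up to $\d M$, the regularity of $w$ is immediate and the only real content is organizing the factorization correctly; the apparent difficulty is thus mostly a matter of citing the right one-sided pseudodifferential parametrix construction (as in \cite{Lee1989} or \cite{Taylor1981}) and noting that the sign of the principal symbol of $\cB$ makes the relevant factor elliptic in the appropriate directed sense.
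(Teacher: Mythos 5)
The paper does not prove this lemma; it quotes it directly from \cite{Valero2024}, which establishes it by the Lee--Uhlmann factorization method. Your proposal follows the same approach, but there are two issues. The first is a sign/direction error in the middle paragraph: with $b_1 = -|\xi'|_g$, the factor $\nabla^A_n - \cB$ is the one whose evolution is smoothing in the direction of \emph{increasing} $x^n$ (solutions decay like $e^{-|\xi'|x^n}$ into the interior), whereas the factor acting on $w$, namely $-\nabla^A_n + (n-1)H - \cB$ with schematic symbol $-\partial_n + |\xi'|$, has homogeneous solutions $e^{|\xi'|x^n}$ that \emph{grow} into the interior. The smoothing direction for the $w$-equation is therefore from the interior \emph{toward} the boundary, the opposite of "well-posed propagating away from the boundary." What one actually uses is that $w(\cdot, 0)$ is expressed in terms of $w(\cdot, \delta)$ and the smooth source $-\sS\psi$ through a backward propagator that damps frequencies by $e^{-|\xi'|\delta}$.

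The second, more substantive issue is your closing claim that "since $\psi$ is already smooth up to $\partial M$, the regularity of $w$ is immediate and the only real content is organizing the factorization correctly." This only shows that $\sR := \Lambda_{g,A,m,\sU} - \cB|_{x^n=0}$ maps $C^\infty_c$ into $C^\infty$, which is true of every pseudodifferential operator and says nothing about its order. The lemma asserts that $\sR \in \Psi^{-\infty}$, i.e.\ that $\sR$ is bounded $H^s \to H^{s+N}$ for every $N$, and this requires tracking the dependence on $f$ quantitatively: one must combine the fact that the backward propagator for $-\nabla^A_n + (n-1)H - \cB$ gains arbitrarily many derivatives with Sobolev estimates for the Dirichlet solution operator of \eqref{Dirichlet problem for D_A 2} (controlling $w(\cdot,\delta)$) and for $\sS\psi$. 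Your earlier reference to the "standard energy/regularity estimate for one-sided pseudodifferential ODEs" is where the actual content lies; equivalently, as in \cite{Lee1989}, one builds an approximate Poisson operator from $\cB$ and compares with the exact solution via elliptic regularity with estimates. Dismissing this step as immediate from the smoothness of $\psi$ is the gap in the write-up, even though your overall strategy is the right one.
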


Thus, we may determine the symbol of $\Lambda_{g,A,m,\sU}$ by computing the symbol of $\cB$ from equation \eqref{factorization into 2 first-order pseudos} and restricting to the boundary. Indeed, equation \eqref{factorization into 2 first-order pseudos} reduces to
\begin{equation}\label{factorization equation reduced}
    [\d_n, \cB] + [\kappa_A(\d_n), \cB] - (n-1)H \cB + \cB^2 = Q_2 + Q_1 + Q_0,
\end{equation}
where $\kappa_A$ is the twisted connection $1$-form from \eqref{twisted connection 1 form}, and\footnote{Note that what we call $Q_0$ here is denoted $Q_0'$ in \cite{Valero2024}}
\begin{align}
    Q_2 &= -g^{\a \b} \d_{\a} \d_{\b}, \nonumber \\
    Q_1 &= -2 g^{\a \b} \kappa_A(\d_{\a}) \d_{\b} + g^{\a \b} \Gamma^{\gamma}_{\ \a \b} \d_{\gamma}, \nonumber \\
    Q_0 &= -g^{\a \b} \d_\a \left( \kappa_A(\d_\b) \right) - g^{\a \b} \kappa_A(\d_\a) \kappa_A(\d_\b) + g^{\a \b} \Gamma^{\gamma}_{\a \b} \kappa_A(\d_{\gamma})  + \frac{1}{4} R + \fF_A - m^2.
\end{align}
Now, let us compute the symbol of $\cB$ with respect to a spinor frame induced by an orthonormal frame $(e_a, e_n)$ that is parallel along $e_n$ near the boundary. In particular, since we are working in a local trivialization of $E$ where $A_n = 0$, it follows that $\kappa_A(\d_n) = 0$ near the boundary. With respect to such a frame, we may write equation \eqref{factorization equation reduced} as an equation for the symbol of $\cB$,
\begin{equation}\label{B symbol equation}
    \d_n b - (n-1) H b + \sum_{\n} \frac{1}{i^{|\n|} \n!} \d_{\x}^{\n} b \cdot \d_{x'}^{\n} b  = q_2 + q_1 + q_0
\end{equation}
where $b$ is the symbol of $\cB$, $q_i$ is the symbol of $Q_i$, and $\n$ in the sum runs over all multi-indices. As is shown in \cite{Valero2024}, the symbol $b$ has the form of an asymptotic series
\begin{equation}\label{B asymptotic series}
    b(x,\x) = \sum_{k=0}^{\infty} b_{1-k}(x,\x)
\end{equation}
where each $b_{1-k}$ is positive-homogeneous of degree $1-k$ in $\x$. By plugging the asymptotic series \eqref{B asymptotic series} into equation \eqref{B symbol equation} and equating terms of similar degrees, we can inductively solve for the components $b_{1-k}$. In particular, from the degree $2$ part of \eqref{B symbol equation}, we immediately have
\begin{equation}
    b_1(x,\x) = \sqrt{q_2} = -|\x|_g \id_{\bS_E}.
\end{equation}
With respect to the splitting $\bV_E^+ \oplus \bV_E^-$, we therefore have
\begin{equation}
    b_1^{\pm \pm}(x,\x) = -|\x|_g \id_{\bV_E^{\pm}}, \ \ \ \ \ \ \ b_1^{\pm \mp}(x,\x) = 0.
\end{equation}
In the future, we will often omit writing the identity operator explicitly.

Let us now compute the term $b_0$. To simplify the following computations, we let
\begin{equation}\label{unit vector}
    \hat{\x} := \frac{\x}{|\x|},
\end{equation}
and shall use the same symbol for symmetric tensor powers of $\hat{\x}$. That is, we let $\hat{\x}_{\a \b}$ denote $\hat{\x}_{\a} \hat{\x}_{\b}$, and so on. Note that $\hat{\x}_{\a_1 \cdots \a_k}$ is always positive homogeneous in $\x$ of degree $0$.

Now, proceeding with the computation of $b_0$, we obtain from the degree $1$ part of equation \eqref{B symbol equation} the following expression:
\begin{align}
    b_0 &= \frac{1}{2 |\x|} \left(\d_n b_1 -i \d_{\x} b_1 \cdot \d_{x'} b_1 - (n-1) H b_1 - q_1  \right) \nonumber \\
    &= i g^{\a \b} \kappa_A(\d_{\a}) \hat{\x}_{\b} - \frac{i}{2} g^{\a \b} \Gamma^{\gamma}_{\ \a \b} \hat{\x}_{\g} - \frac{i}{4} g^{\gamma \delta}  \d_{\g} g^{\a \b} \hat{\x}_{\a \b \delta} + \frac{n-1}{2} H  - \frac{1}{4} \d_n g^{\a \b} \hat{\x}_{\a\b},
\end{align}
from which we deduce that
\begin{equation}\label{b_0 pm pm}
    b^{\pm \pm}_0 =  i g^{\a \b} \kappa^{\pm \pm}_A(\d_\a) \hat{\x}_{\b} - \frac{i}{2} g^{\a \b} \Gamma^{\gamma}_{\ \a \b} \hat{\x}_{\g} - \frac{i}{4} g^{\gamma \delta}  \d_{\g} g^{\a \b} \hat{\x}_{\a \b \delta} + \frac{n-1}{2} H  - \frac{1}{4} \d_n g^{\a \b} \hat{\x}_{\a\b},
\end{equation}
\vspace{-5pt}
\begin{equation}\label{b_0 pm mp}
    b_0^{\pm \mp} = ig^{\a \b} \kappa_A^{\pm \mp}(\d_{\a}) \hat{\x}_{\b},
\end{equation}
where $\kappa_A(\d_{\a})$ splits into $+$ and $-$ components as
\begin{align}
    \kappa_A^{\pm \pm}(\d_{\a}) &= -\frac{1}{2} \sum_{b<c} \omega^b_{\ c}(\d_{\a}) \gamma^b \gamma^c + A_{\a}, \label{form of kappa --} \\
    \kappa_A^{\pm \mp}(\d_{\a}) &= -\frac{1}{2} \sum_{b} \omega^b_{\ n}(\d_{\a}) \gamma^b \gamma^n. \label{form of kappa -+}
\end{align}
We also record the even and odd parts of the symbol under $\x \mapsto -\x$,
\begin{equation}\label{b_0 -- even}
    \left( b_0^{\pm \pm} \right)^{\mathfrak{e}} = \frac{n-1}{2} H - \frac{1}{4} \d_n g^{\a \b} \hat{\x}_{\a\b},
\end{equation}
\begin{equation}\label{b_0 -- odd}
    \left( b_0^{\pm \pm} \right)^{\mathfrak{o}} =  i g^{\a \b} \kappa^{\pm \pm}_A(\d_\a) \hat{\x}_{\b} - \frac{i}{2} g^{\a \b} \Gamma^{\gamma}_{\ \a \b} \hat{\x}_{\g} - \frac{i}{4} g^{\gamma \delta}  \d_{\g} g^{\a \b} \hat{\x}_{\a \b \delta},
\end{equation}
\begin{equation}\label{b_0 -+ odd}
    \left( b_0^{\pm \mp} \right)^{\mathfrak{o}} = ig^{\a \b} \kappa_A^{\pm \mp} \hat{\x}_{\b},
\end{equation}
\begin{equation}\label{b_0 -+ even}
    \left( b_0^{\pm \mp} \right)^{\mathfrak{e}} = 0.
\end{equation}
Note that the even part of $b_0^{\pm \pm}$ is a multiple of the identity. Note also that the only parts of $b_0$ that depend on normal derivatives of the metric are the even part of $b_0^{\pm \pm}$ and the odd part of $b_0^{\pm \mp}$.

Let us now compute $b_{-1}$ modulo $\cD_0$. The degree $0$ part of equation \eqref{B symbol equation} gives us
\begin{align}
    b_{-1} &= -\frac{1}{2|\x|}\left( q_0 -\d_n b_0 + (n-1) H b_0 - b_0^2 + i \d_{\x} b_1 \cdot \d_{x'} b_0 + i \d_{\x} b_0 \cdot \d_{x'} b_1 \right),
\end{align}
where
\begin{equation}\label{symbol q_0}
    q_0 = -g^{\a \b} \d_{\a} \left( \kappa_A(\d_{\b}) \right) - g^{\a \b} \kappa_A(\d_{\a}) \kappa_A(\d_{\b}) + g^{\a \b} \Gamma^{\gamma}_{\ \a \b} \kappa_A(\d_{\gamma}) + \frac{1}{4} R + \fF_A - m^2.
\end{equation}
We want to carefully consider the structure of the symbol $b_1$, as it this structure will be repeated in all of the other components $b_{1-k}$ for $k > 2$, which will be of crucial importance in Section \ref{subsec: symbol of Theta}. For the moment, we will focus on two particular parts of the symbol, namely the odd part of $b_{-1}^{--}$ and the even part of $b_{-1}^{-+}$ (the analysis of $b_{-1}^{++}$ and $b_{-1}^{+-}$ is analogous). We have
\begin{align}\label{b_-1 --}
    b_{-1}^{--} &= -\frac{1}{2|\x|}\bigg( q_0^{--} -\d_n b^{--}_0 + (n-1) H b^{--}_0 - \left( b_0^{--} \right)^2 - b_0^{-+} b_0^{+-} + i \d_{\x} b_1 \cdot \d_{x'}  b^{--}_0 \nonumber \\
    & \hspace{325pt} + i \d_{\x}  b^{--}_0 \cdot \d_{x'} b_1 \bigg),
\end{align}
while on the other hand,
\begin{align}\label{b_-1 -+}
    b_{-1}^{-+} &= -\frac{1}{2|\x|}\bigg( q_0^{-+} -\d_n b^{-+}_0 + (n-1) H b^{-+}_0 -  b_0^{--} b_0^{-+} - b_0^{-+} b_0^{++} + i \d_{\x} b_1 \cdot \d_{x'}  b^{-+}_0 \nonumber \\
    & \hspace{325pt} + i \d_{\x}  b^{-+}_0 \cdot \d_{x'} b_1 \bigg).
\end{align}
Note that from \eqref{symbol q_0} we have that 
\begin{align}
    q^{-+}_0 &= -g^{\a \b} \d_{\a} \left( \kappa^{-+}_A(\d_{\b}) \right) - g^{\a \b} \kappa^{--}_A(\d_{\a}) \kappa^{-+}_A(\d_{\b}) - g^{\a \b} \kappa^{-+}_A(\d_{\a}) \kappa^{++}_A(\d_{\b}) \nonumber \\
    & \hspace{250pt} + g^{\a \b} \Gamma^{\gamma}_{\ \a \b} \kappa^{-+}_A(\d_{\gamma}) +  \fF^{-+}_A. \label{q_0^-+}
\end{align}
For later use, we note that \eqref{form of kappa -+} implies $q_0^{-+}$ has the form
\begin{align}
    q^{-+}_0 &=  - g^{\a \b} \kappa^{--}_A(\d_{\a}) \kappa^{-+}_A(\d_{\b}) - g^{\a \b} \kappa^{-+}_A(\d_{\a}) \kappa^{++}_A(\d_{\b}) + \gamma^n \slashed{\cD}_{1,0} + \fF^{-+}_A. \label{q_0^-+ form}
\end{align}
Thus, we deduce from \eqref{b_-1 --} that the odd part of $b_{-1}^{--}$ is
\begin{align}\label{b_-1 -- odd}
    \left( b_{-1}^{--} \right)^{\mathfrak{o}} &= -\frac{1}{2|\x|}\bigg( -\d_n \left( b^{--}_0 \right)^{\mathfrak{o}} + (n-1) H \left( b^{--}_0 \right)^{\mathfrak{o}} - \left( b_0^{--} \right)^{\mathfrak{o}} \left( b_0^{--} \right)^{\mathfrak{e}} - \left( b_0^{--} \right)^{\mathfrak{e}} \left( b_0^{--} \right)^{\mathfrak{o}} \nonumber \\
    &\hspace{200pt} + i \d_{\x} b_1 \cdot \d_{x'}  \left( b^{--}_0 \right)^{\mathfrak{e}} + i \d_{\x}  \left( b^{--}_0 \right)^{\mathfrak{e}} \cdot \d_{x'} b_1 \bigg), \nonumber \\
    &= -\frac{1}{2|\x|}\bigg( -\d_n \left( b^{--}_0 \right)^{\mathfrak{o}} + (n-1) H \left( b^{--}_0 \right)^{\mathfrak{o}} - 2 \left( b_0^{--} \right)^{\mathfrak{o}} \left( b_0^{--} \right)^{\mathfrak{e}} + \cD_{1,0} \cdot \id \bigg),
\end{align}
where we have used the fact that $(b_0^{--})^{\mathfrak{e}}$ has the form $\cD_{1,0} \cdot \id$. Moreover, the even part of $b_{-1}^{-+}$ is
\begin{align}\label{b_-1 -+ even}
    \left( b_{-1}^{-+} \right)^{\mathfrak{e}} &= -\frac{1}{2|\x|}\bigg( q_0^{-+} -  \left( b_0^{--} \right)^{\mathfrak{o}} b_0^{-+} - b_0^{-+} \left( b_0^{++} \right)^{\mathfrak{o}} + i \d_{\x} b_1 \cdot \d_{x'}  b^{-+}_0 + i \d_{\x}  b^{-+}_0 \cdot \d_{x'} b_1 \bigg) \nonumber \\
    &= -\frac{1}{2|\x|}\bigg( q_0^{-+} -  \left( b_0^{--} \right)^{\mathfrak{o}} b_0^{-+} - b_0^{-+} \left( b_0^{++} \right)^{\mathfrak{o}} + \gamma^n \slashed{\cD}_{1,0} \bigg).
\end{align}
Moving forward, we want to impose an assumption which allows us to greatly simplify the computations in Section \ref{subsec: symbol of Theta}. Let $x_0$ be an arbitrary point in $\sU$. We shall henceforward assume that we are computing the symbol of $\Lambda_{g,A,m,\sU}$ with respect to a local trivialization of $\bS$ induced by an orthonormal frame $(e_a)$ on the boundary such that $\nabla^{\d M} e_a$ vanishes at $x_0$, where $\nabla^{\d M}$ denotes the intrinsic Levi-Civita connection of the metric induced on the boundary. Such a frame always exists, and if the metric is known on $\sU$, such a frame can always be constructed for an arbitrary point $x_0$. In such a frame, $\omega^a_{\ bc}$ vanishes at $x_0$. Moreover, if the connection on $\sU$ is known, we may also construct a frame of $E$ along the boundary so that the connection one-form $A$ also vanishes at $x_0$. The implication of working in such a frame of $\bS_E$ is that $\kappa_A^{\pm \pm}$ vanishes when evaluated at $x_0$. Thus, we see from equations \eqref{b_0 -- even}--\eqref{b_0 -+ even} that $b_0^{--}$ is a multiple of the identity at $x_0$. This will greatly simplify the computations in the following section, where we will allow $x_0$ to vary. 

With this in mind, we see from \eqref{b_-1 -- odd} and \eqref{b_-1 -+ even} that
\begin{align}\label{b_-1 -- odd at x_0}
    \left( b_{-1}^{--} \right)^{\mathfrak{o}} \big|_{x_0} &= -\frac{1}{2|\x|}\bigg( -\d_n \left( b^{--}_0 \right)^{\mathfrak{o}} + \cD_{1,0} \cdot \id + \, i \d_{\x} b_1 \cdot \d_{x'}  \left( b^{--}_0 \right)^{\mathfrak{e}} + i \d_{\x}  \left( b^{--}_0 \right)^{\mathfrak{o}} \cdot \d_{x'} b_1 \bigg) \bigg|_{x_0}.
\end{align}
Note that $\left( b_0^{--} \right)^{\mathfrak{e}}$ is always a multiple of the identity by \eqref{b_0 -- even}, and that $\d_{\x} \left( b_0^{--} \right)^{\mathfrak{o}}$ is still a multiple of the identity when evaluated at $x_0$, by \eqref{b_0 -- odd}. Therefore, equation \eqref{b_-1 -- odd at x_0} reduces to
\begin{align}\label{b_-1 -- odd at x_0 v1.5}
    \left( b_{-1}^{--} \right)^{\mathfrak{o}} \big|_{x_0} &= -\frac{1}{2|\x|}\bigg( -\d_n \left( b^{--}_0 \right)^{\mathfrak{o}} + \cD_{1,0} \cdot \id \bigg) \bigg|_{x_0},
\end{align}
which upon using equation \eqref{b_0 -- odd}, becomes
\begin{align}\label{b_-1 -- odd at x_0 v2}
    \left( b_{-1}^{--} \right)^{\mathfrak{o}} \big|_{x_0} &= \frac{1}{2|\x|}\bigg( \d_n \left( i g^{\a \b} \kappa^{--}_A(\d_\a) \right) \hat{\x}_{\b} + \cD_{1,0} \cdot \id \bigg) \bigg|_{x_0}, \nonumber \\
    &= \frac{1}{2|\x|} \bigg( i g^{\a \b} \d_n \left( \kappa^{--}_A(\d_\a) \right) \hat{\x}_{\b} + \cD_{1,0} \cdot \id \bigg) \bigg|_{x_0},
\end{align}
where in the last line we have used the fact that $\kappa^{\pm \pm}_A$ vanishes at $x_0$.
Turning now to the even part of $b_{-1}^{-+}$, we use equation \eqref{b_-1 -+ even} and the fact that $b_0^{-+}$ is $\gamma^n \slashed{\cD}_{1,0}$ to obtain
\begin{align}\label{b_-1 -+ even at x_0}
    \left( b_{-1}^{-+} \right)^{\mathfrak{e}} \big|_{x_0} &= -\frac{1}{2|\x|}\bigg( q_0^{-+} + \gamma^n \slashed{\cD}_{1,0} \bigg) \bigg|_{x_0}.
\end{align}
Using the vanishing of $\kappa_A^{\pm \pm}$ at $x_0$, we see from \eqref{q_0^-+ form} that
\begin{equation}\label{q_0^-+ form at x_0}
    q_0^{-+} \big|_{x_0} = \fF_A^{-+} + \gamma^n \slashed{\cD}_{1,0}. 
\end{equation}
Therefore, it follows from \eqref{b_-1 -+ even at x_0} that
\begin{align}\label{b_-1 -+ even at x_0 v2}
    \left( b_{-1}^{-+} \right)^{\mathfrak{e}} \big|_{x_0} &= -\frac{1}{2|\x|} \left( \fF_A^{-+} + \gamma^n \slashed{\cD}_{1,0} \right) \Big|_{x_0}.
\end{align}

To finish our analysis of $b_{-1}$, we want to consider the highest order terms in the even part of $b_{-1}^{\pm \pm}$ and the odd part of $b_{-1}^{\pm \mp}$, which are the only components of $b_{-1}$ containing terms in $\cD_2$. We shall thus keep track of terms of second order in the metric modulo terms in $\cD_1$. 

As the scalar curvature appears in $q_0$, let us us first note that
\begin{align}
    \frac{1}{4} R &= \frac{1}{4} \left( g^{\a \b} \d_n \Gamma^n_{\ \a \b} - \d_n \Gamma^{\a}_{\ n\a} \right) + \cD_{1,0} \nonumber \\
    &= \frac{1}{4} \d_n \left( g^{\a \b} \Gamma^n_{\ \a \b} \right) - \frac{1}{4} \d_n \Gamma^{\a}_{\ n\a} + \cD_{1,0} \nonumber \\
    &= \frac{n-1}{2} \d_n H + \cD_{1,0}. \label{scalar curvature}
\end{align}
It is thus clear from equations \eqref{symbol q_0}, \eqref{b_-1 --}, \eqref{b_0 -- even}, and \eqref{scalar curvature} that
\begin{align}
    \left( b_{-1}^{--} \right)^{\mathfrak{e}} &= -\frac{1}{2|\x|} \left( q_0^{--} - \d_n \left( b_0^{--} \right)^{\mathfrak{e}}\, \right) + \cD_{1,0} \nonumber \\
    &= -\frac{1}{8|\x|} \d^2_n g^{\a \b} \hat{\x}_{\a\b} + \cD_{1,0}. \label{b_-1 -- highest order}
\end{align}
For the odd part of $b_{-1}^{-+}$, we have from equations \eqref{b_0 -+ odd} and \eqref{b_-1 -+} that
\begin{align}
    \left( b_{-1}^{-+} \right)^{\mathfrak{o}} &= \frac{1}{2|\x|} \d_n b_0^{-+}  + \cD_{1,0} \nonumber \\
    &= \frac{1}{2|\x|}  i g^{\a \b} \d_n \left( \kappa_A^{-+}(\d_{\a}) \right) \hat{\x}_{\b}  + \cD_{1,0}. \label{b_-1 -+ highest order}
\end{align}
It is noteworthy that here, we have not made use of our particular choice of frame.

We can now inductively study the structure of $b_{-(k+1)}$ for $k > 0$, modulo terms in $\cD_k$. We thus keep track of terms in $\cD_{k+2}$ and $\cD_{k+1}$, and we want to show that the symbol $b_{-(k+1)}$ is analogous in structure to $b_{-1}$ with respect to these terms. To this end, let us consider the term $b_{-(k+1)}$. The degree $-k$ part of equation \eqref{B symbol equation} gives us
\begin{align}
    b_{-(k+1)} &= -\frac{1}{2|\x|} \left( -\d_n b_{-k} + (n-1) H b_{-k} - \sum_{\substack{i + j - |\nu| = -k \\ -k \leq i,j \leq 1}} \frac{1}{i^{|\nu|}\nu!} \d^{\nu}_{\x} b_i \d_{x'}^{\nu} b_j \right) \nonumber \\
    &= \frac{1}{2|\x|} \Big( \d_n b_{-k} - (n-1) H b_{-k} + b_0 b_{-k} + b_{-k} b_0 - i \d_{\x} b_{-k} \d_{x'} b_1 - i\d_{\x} b_1 \d_{x'} b_{-k} \Big) + \cD_{k}. \label{b -(k+1) equation}
\end{align}
One immediate consequence of the above equation is that the highest normal derivatives in $b_{-(k+1)}$ enter through the $\d_n b_{-k}$ term. In particular, given the form of $b_{-1}$ in equations \eqref{b_-1 -- highest order} and \eqref{b_-1 -+ highest order}, as well as equation \eqref{b -(k+1) equation}, it is easy to see by induction that we have the following
\begin{proposition}\label{prop: form of b_-(k+1) to highest order}
    For $k \geq 0$, the symbol $b_{-(k+1)}$ has the following form:
    \begin{align}
    \left( b^{--}_{-(k+1)} \right)^{\mathfrak{o}} &= \cD_{k+1}, \label{b-(k+1) -- odd is D_k+1} \\
        \left( b^{-+}_{-(k+1)} \right)^{\mathfrak{e}} &= \cD_{k+1} \label{b-(k+1) -+ even is D_k+1} \\
        \left( b^{--}_{-(k+1)} \right)^{\mathfrak{e}}  &= -\frac{1}{ 2^{k+3} |\x|^{k+1} } \d_n^{k+2} g^{\a \b} \hat{\x}_{\a \b} + \cD_{k+1,k}, \label{b-(k+1) -- even highest term} \\
        \left( b^{-+}_{-(k+1)} \right)^{\mathfrak{o}} &= \frac{i}{2^{k+1}|\x|^{k+1}}  g^{\a \b} \d_n^{k+1} \left( \kappa_A^{-+}(\d_{\a}) \right) \hat{\x}_{\b} + \cD_{k+1,k}. \label{b-(k+1) -+ odd highest term}
\end{align}
Analogous results hold with $+$ and $-$ interchanged.
\end{proposition}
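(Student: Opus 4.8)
The plan is to prove Proposition \ref{prop: form of b_-(k+1) to highest order} by induction on $k$, using the recursion \eqref{b -(k+1) equation} and exploiting the parity bookkeeping together with the frame normalization $\kappa_A^{\pm\pm}|_{x_0} = 0$ in the way it has already been set up. The base case $k=0$ is exactly the content of equations \eqref{b_-1 -- odd at x_0 v2}, \eqref{b_-1 -+ even at x_0 v2}, \eqref{b_-1 -- highest order}, and \eqref{b_-1 -+ highest order}, so there is nothing new to do there. For the inductive step, assume the four displayed formulas hold for some $k \geq 0$ (in all four block-components, by the symmetry $+\leftrightarrow-$), and analyze each term on the right-hand side of \eqref{b -(k+1) equation} for its parity and for how many normal derivatives of $g$ and $A$ it can carry.

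First I would track the \emph{highest-order} terms, i.e. the claims \eqref{b-(k+1) -- even highest term} and \eqref{b-(k+1) -+ odd highest term}. As noted right after \eqref{b -(k+1) equation}, the only way to gain a normal derivative is through the $\partial_n b_{-k}$ term; every other term on the right — $(n-1)Hb_{-k}$, the products $b_0 b_{-k}$ and $b_{-k}b_0$, and the two terms $\partial_\xi b_{-k}\,\partial_{x'}b_1$ and $\partial_\xi b_1\,\partial_{x'}b_{-k}$ — involves $b_0$, $b_1$, $H$, or a tangential derivative, none of which raises the normal-derivative count, so all of those lie in $\cD_{k+1}$ at worst and hence are absorbed. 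Applying $\partial_n$ to the inductive formula for $(b^{--}_{-k})^{\mathfrak{e}}$ produces $-\frac{1}{2^{k+2}|\x|^{k}}\partial_n^{k+2}g^{\a\b}\hat\x_{\a\b}$ plus lower terms (the $|\x|^{-k}$ factor carries no $x^n$-dependence and $\hat\x$ is homogeneous of degree $0$), and multiplying by $\frac{1}{2|\x|}$ gives precisely the coefficient $-\frac{1}{2^{k+3}|\x|^{k+1}}$ in \eqref{b-(k+1) -- even highest term}; note that $(b_0^{--})^{\mathfrak o}$ carries at most one normal derivative of $g$, but it enters only multiplied by another $b$ term, so its product with $b_{-k}$ still lands in $\cD_{k+1}$ and does not interfere with the $\cD_{k+2}$ coefficient. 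The same $\partial_n$-of-the-inductive-formula computation gives \eqref{b-(k+1) -+ odd highest term}, using that $\partial_n$ of $g^{\a\b}\partial_n^k(\kappa_A^{-+}(\partial_\a))$ produces $g^{\a\b}\partial_n^{k+1}(\kappa_A^{-+}(\partial_\a))$ modulo $\cD_{k+1,k}$, and that $\kappa_A^{-+}$ depends on the metric only (see \eqref{form of kappa -+}), so this term lies in $\cD_{k+1}$, not in the "bad" set $\cD_{k+1,k+1}\setminus\cD_{k+1,k}$.

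Next I would verify the parity-and-order claims \eqref{b-(k+1) -- odd is D_k+1} and \eqref{b-(k+1) -+ even is D_k+1}. Here the key point is that \eqref{B symbol equation} respects the $\Z_2$-grading by parity under $\x\mapsto-\x$ combined with the off-diagonal/diagonal block structure: $b_1$ is even and diagonal and a multiple of the identity, $(b_0^{\pm\pm})^{\mathfrak e}$ is a multiple of the identity (from \eqref{b_0 -- even}), $(b_0^{\pm\pm})^{\mathfrak o}$ and $(b_0^{\pm\mp})^{\mathfrak o}$ carry at most one normal derivative, and $\partial_n$ preserves parity. So $(b^{--}_{-(k+1)})^{\mathfrak o}$ receives contributions from $\partial_n(b^{--}_{-k})^{\mathfrak o}$ (which is $\cD_{k+1}$ by the inductive hypothesis after applying $\partial_n$), from cross terms mixing an even and an odd factor — e.g. $(b_0^{--})^{\mathfrak e}(b^{--}_{-k})^{\mathfrak o}$ or $(b_0^{-+})^{\mathfrak o}(b^{+-}_{-k})^{\mathfrak e}$ — each of which carries at most one extra normal derivative beyond what is already in the inductive formulas and so stays in $\cD_{k+1}$, and from the tangential-derivative terms which add no normal derivative at all. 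The same grading argument gives \eqref{b-(k+1) -+ even is D_k+1}, now pairing the odd off-diagonal piece of one factor with an even diagonal piece of the other, or the diagonal-odd with off-diagonal-odd, each combination landing in $\cD_{k+1}$. Throughout, one uses $\partial_n\cD_j = \cD_{j+1}$ and the fact that a product of $\cD_{i}$ and $\cD_{j}$ terms lies in $\cD_{i+j}$, together with the elementary observation that everything on the right of \eqref{b -(k+1) equation} other than $\partial_n b_{-k}$ is a \emph{product} involving at least one of $b_0, b_1, H$, so it cannot simultaneously be of order $k+2$ in normal derivatives and contribute to the leading coefficient.

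The main obstacle is purely bookkeeping: making sure that in the off-diagonal highest-order term \eqref{b-(k+1) -+ odd highest term} one does not accidentally generate a $\cD_{k+1,k+1}$ contribution — i.e. one extra normal derivative of the \emph{connection} — from the products $b_0^{-+}b_0^{\mathrm{(diagonal)}}_{-k}$ or from $\partial_n$ hitting a $\kappa_A$ inside $b_{-k}$; the resolution is that $b_0^{-+}$ is $\gamma^n\slashed\cD_{1,0}$ (so it carries \emph{no} connection derivative, only metric), and the only connection-derivative-raising operation is $\partial_n$ applied to the single $\kappa_A^{-+}$ already isolated in the inductive formula, which by \eqref{form of kappa -+} again involves only the metric. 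Hence the connection never enters the highest-order off-diagonal term, consistent with the $\cD_{k+1,k}$ remainder claimed. Once this is checked, the induction closes and the proposition follows.
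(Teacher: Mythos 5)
Your proof is correct and carries out precisely the induction on $k$ that the paper itself invokes with the one-liner ``given the form of $b_{-1}$ \ldots it is easy to see by induction''; the key observations — that $\partial_n b_{-k}$ is the only derivative-raising term on the right of \eqref{b -(k+1) equation}, that the parity/block grading forces the highest-order piece of $(b^{--}_{-(k+1)})^{\mathfrak{e}}$ and $(b^{-+}_{-(k+1)})^{\mathfrak{o}}$ to come from $\partial_n$ applied to the corresponding piece of $b_{-k}$, and that $\kappa_A^{-+}$ and $b_0^{-+}$ carry no connection derivatives so the remainder stays in $\cD_{k+1,k}$ — are exactly the right ones. Two small imprecisions worth flagging: for the base case of claims \eqref{b-(k+1) -- odd is D_k+1}--\eqref{b-(k+1) -+ even is D_k+1} you should cite the \emph{general} formulas \eqref{b_-1 -- odd} and \eqref{b_-1 -+ even} rather than their evaluations at $x_0$, since the proposition is a statement prior to any choice of adapted frame; and the phrase ``so this term lies in $\cD_{k+1}$'' applied to $g^{\a\b}\partial_n^{k+1}\kappa_A^{-+}(\partial_\a)$ is off by one (it is $\cD_{k+2}$, more precisely $\cD_{k+2,0}$) — the point you actually need, and correctly identify in the next clause, is that its connection-derivative count is zero, so nothing lands in $\cD_{k+1,k+1}\setminus\cD_{k+1,k}$.
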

In particular, we see that the highest order part of $b^{--}_{-(k+1)}$ is a multiple of the identity, while the highest order part of $b_{-(k+1)}^{-+}$ has the form $\gamma^n \slashed{\cD}_{k+2}$. However, in analogy with our analysis of $b_{-1}$, we want to keep careful track of the highest order terms in \eqref{b-(k+1) -- odd is D_k+1}, \eqref{b-(k+1) -+ even is D_k+1} modulo $\cD_{k}$. In particular, we want to prove the following more precise result:

\begin{proposition}\label{prop: precise form of b -(k+1)}
    The term $b_{-(k+1)}$ has the following form for $k \geq 0$:
    \begin{align}
        \left( b^{--}_{-(k+1)} \right)^{\mathfrak{o}} &= \frac{1}{2|\x|} \left( \frac{i}{2^k|\x|^k} g^{\a \b} \d_n^{k+1} \left( \kappa_A^{--}(\d_{\a}) \right) \hat{\x}_{\b} + \cD_{k+1,k} \cdot \left( b_0^{--} \right)^{\mathfrak{o}} + \cD_{k+1,k} \cdot \id \right) + \cD_{k}, \label{prop: precise form of b -(k+1) -- eqn} \\
        \left( b^{-+}_{-(k+1)} \right)^{\mathfrak{e}} &= \frac{1}{2|\x|} \left( -\frac{1}{2^k|\x|^k} \d_n^k q_0^{-+} + \left( b_0^{--} \right)^{\mathfrak{o}} \gamma^n \slashed{\cD}_{k+1, k} + \gamma^n \slashed{\cD}_{k+1, k} \left( b_0^{++} \right)^{\mathfrak{o}} + \gamma^n \slashed{\cD}_{k+1, k}  \right) + \cD_{k}. \label{prop: precise form of b -(k+1) -+  eqn}
    \end{align}
    Moreover, we have
    \begin{equation}\label{normal derivatives of q_0}
        \d_n^{k} q_0^{-+} = \d_n^k \fF_A^{-+} - g^{\a \b} \kappa^{--}_A(\d_{\a}) \left( \d_n^k  \kappa^{-+}_A(\d_{\b}) \right) - g^{\a \b} \left( \d^k_n \kappa^{-+}_A(\d_{\a}) \right) \kappa^{++}_A(\d_{\b}) + \gamma^n \slashed{\cD}_{k+1, k} + \cD_{k}.
    \end{equation}
    Analogous results hold with $+$ and $-$ interchanged.
\end{proposition}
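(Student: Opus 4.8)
The plan is to prove \eqref{prop: precise form of b -(k+1) -- eqn} and \eqref{prop: precise form of b -(k+1) -+  eqn} (and their $+\leftrightarrow-$ analogues) simultaneously by induction on $k$, using the recursion \eqref{b -(k+1) equation} to pass from $b_{-(k+1)}$ to $b_{-(k+2)}$, and to prove \eqref{normal derivatives of q_0} separately by differentiating \eqref{q_0^-+}. Throughout, the accounting is done with the closure rules $\d_n\cD_k=\cD_{k+1}$ and $\cD_j\cdot\cD_k\subseteq\cD_{j+k}$ (and their $\slashed{}$ analogues), together with the structural facts already in hand: $b_1=-|\x|_g\,\id$ and the even parts $(b_0^{\pm\pm})^{\mathfrak{e}}$ of \eqref{b_0 -- even} are scalar; the off-diagonal blocks factor as $\gamma^n$ times a $\slashed{}$-quantity by \eqref{form of kappa -+} and \eqref{b_0 -+ odd}; a product of two off-diagonal blocks is diagonal while a product of one off-diagonal with one diagonal block is off-diagonal; and Proposition \ref{prop: form of b_-(k+1) to highest order} already pins the \emph{complementary} blocks $(b^{\pm\pm}_{-(k+1)})^{\mathfrak{e}}$ and $(b^{\pm\mp}_{-(k+1)})^{\mathfrak{o}}$ down to leading order, so that only the four blocks named in the proposition must be propagated precisely.

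For the base case $k=0$, equation \eqref{prop: precise form of b -(k+1) -- eqn} comes from \eqref{b_-1 --} by substituting \eqref{b_0 -- odd} and \eqref{b_0 -- even} and isolating the unique place where the first normal derivative of $\kappa_A^{--}$ appears, namely $\d_n(b_0^{--})^{\mathfrak{o}}$, which by \eqref{b_0 -- odd} equals $i\,g^{\a\b}\d_n(\kappa_A^{--}(\d_\a))\hat{\x}_{\b}$ plus terms in $\cD_{1,0}\cdot\id$ and $\cD_{1,0}\cdot(b_0^{--})^{\mathfrak{o}}$; the remaining products in \eqref{b_-1 --} are likewise either in $\cD_0$, scalar multiples of $(b_0^{--})^{\mathfrak{o}}$, or scalar (using that $(b_0^{--})^{\mathfrak{e}}$ and $b_1$ are scalar). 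Likewise \eqref{prop: precise form of b -(k+1) -+  eqn} for $k=0$ is \eqref{b_-1 -+ even} once $b_0^{-+}$ is written as $\gamma^n\slashed{\cD}_{1,0}$. For \eqref{normal derivatives of q_0} one applies $\d_n^k$ to \eqref{q_0^-+} and expands by Leibniz: in $g^{\a\b}\kappa_A^{--}(\d_\a)\kappa_A^{-+}(\d_\b)$ only the summand carrying all $k$ derivatives on $\kappa_A^{-+}$ is retained, every other summand having $\d_n^j\kappa_A^{-+}$ with $j<k$, which by \eqref{form of kappa -+} lies in $\gamma^n\slashed{\cD}_{k+1,k}$ once contracted with the remaining undifferentiated factors; $g^{\a\b}\kappa_A^{-+}(\d_\a)\kappa_A^{++}(\d_\b)$ is symmetric, the terms $-g^{\a\b}\d_\a(\kappa_A^{-+}(\d_\b))+g^{\a\b}\Gamma^{\g}_{\ \a\b}\kappa_A^{-+}(\d_\g)$ feed $\gamma^n\slashed{\cD}_{k+1,k}$, and $\d_n^k\fF_A^{-+}$ is kept intact.

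For the inductive step, assume the four formulas for $b_{-(k+1)}$. Apply \eqref{b -(k+1) equation} with $k$ replaced by $k+1$ and extract the odd part of the $--$ block and the even part of the $-+$ block, decomposing the products $b_0 b_{-(k+1)}$, $b_{-(k+1)} b_0$ and the symbol-composition corrections $\d_{\x} b_{-(k+1)}\,\d_{x'} b_1$, $\d_{\x} b_1\,\d_{x'} b_{-(k+1)}$ into diagonal/off-diagonal blocks and even/odd parts. The only term carrying the $(k+2)$-nd normal derivative of $\kappa_A^{--}$, respectively the $(k+1)$-st normal derivative of $q_0^{-+}$ (equivalently of $\fF_A^{-+}$), is $\d_n$ applied to the leading term of $b_{-(k+1)}$ given by the inductive hypothesis, and one checks this reproduces the coefficients $\tfrac{1}{2|\x|}\cdot\tfrac{i}{2^{k+1}|\x|^{k+1}}$ and $-\tfrac{1}{2|\x|}\cdot\tfrac{1}{2^{k+1}|\x|^{k+1}}$. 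Each remaining contribution is then sorted into one of the admissible remainders: it either drops the total derivative count into $\cD_{k+1}$; or retains an undifferentiated diagonal factor $\kappa_A^{\pm\pm}$ (equivalently, vanishes at $x_0$) and is absorbed into $\cD_{k+2,k+1}\cdot(b_0^{--})^{\mathfrak{o}}$ or the identity-proportional remainder; or contains an odd number of off-diagonal factors and so carries a $\gamma^n$ and a $\slashed{}$-factor, landing in $\gamma^n\slashed{\cD}_{k+2,k+1}$. Tracking the powers $2^{-j}|\x|^{-j}$ through the single $\d_n$ and the division by $2|\x|$ gives the asserted normalization, and the $+\leftrightarrow-$ versions are verbatim.

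The hard part will be precisely this last sorting: in the inductive step there are many cross-terms, and for each one must simultaneously match both its Clifford-algebraic type (diagonal versus off-diagonal; even versus odd under $\x\mapsto-\x$; whether it factors through an undifferentiated $\kappa_A^{\pm\pm}$ or through a $\gamma^n\slashed{}$-quantity) and its metric/connection derivative order against the precise remainder classes in \eqref{prop: precise form of b -(k+1) -- eqn}--\eqref{prop: precise form of b -(k+1) -+  eqn}. What keeps this manageable is the preliminary reduction of the complementary blocks through Proposition \ref{prop: form of b_-(k+1) to highest order}, the scalarity of $b_1$ and $(b_0^{\pm\pm})^{\mathfrak{e}}$ (so that multiplying by them never changes the Clifford type of a factor), and the $\gamma^n$-factoring \eqref{form of kappa -+} of the off-diagonal blocks (which controls the parity of the off-diagonal count in any product).
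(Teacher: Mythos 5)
Your proposal is essentially the paper's proof: induction on $k$ with the $k=0$ base case read off from \eqref{b_-1 -- odd} and \eqref{b_-1 -+ even}, the inductive step driven by \eqref{b -(k+1) equation} together with the rough bounds of Proposition \ref{prop: form of b_-(k+1) to highest order} to control the cross-terms, and the observation that the top-order normal derivative enters only through $\d_n$ applied to the leading term of the inductive hypothesis, while \eqref{normal derivatives of q_0} follows by a Leibniz expansion from \eqref{q_0^-+}/\eqref{q_0^-+ form}. The only slip is cosmetic: some of the subleading Leibniz summands in $\d_n^k q_0^{-+}$ actually land in $\cD_k$ rather than $\gamma^n\slashed{\cD}_{k+1,k}$ as you claim, but both classes are admissible remainders in \eqref{normal derivatives of q_0}, so the conclusion is unaffected.
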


\begin{proof}
    The proof is by induction. Note that for $k = 0$, the lemma follows from the expressions for $\left( b^{--}_{-1} \right)^{\mathfrak{o}}$ and $\left( b^{-+}_{-1} \right)^{\mathfrak{e}}$ given in equations \eqref{b_-1 -- odd}, \eqref{b_-1 -+ even}. Suppose now that Proposition \ref{prop: precise form of b -(k+1)} holds for $k-1$. Breaking equation \eqref{b -(k+1) equation} into components and using Proposition \ref{prop: form of b_-(k+1) to highest order}, we obtain
    \begin{align}
    \left( b^{--}_{-(k+1)} \right)^{\mathfrak{o}} &= \frac{1}{2|\x|} \Big( \d_n \left( b^{--}_{-k} \right)^{\mathfrak{o}} - (n-1) H \left( b^{--}_{-k} \right)^{\mathfrak{o}} + \left( b^{--}_0\right)^{\mathfrak{o}} \left( b^{--}_{-k} \right)^{\mathfrak{e}} + \left( b^{--}_0\right)^{\mathfrak{e}} \left( b^{--}_{-k} \right)^{\mathfrak{o}} + b^{-+}_0 \left( b^{+-}_{-k} \right)^{\mathfrak{e}} \nonumber \\
    & + \left( b^{--}_{-k} \right)^{\mathfrak{e}} \left( b^{--}_0 \right)^{\mathfrak{o}} + \left( b^{--}_{-k} \right)^{\mathfrak{o}} \left( b^{--}_0 \right)^{\mathfrak{e}} - \left( b^{-+}_{-k} \right)^{\mathfrak{e}} b^{+-}_0 - i \d_{\x} \left( b^{--}_{-k}\right)^{\mathfrak{e}} \d_{x'} b_1 \nonumber \\
    &\hspace{260pt} - i\d_{\x} b_1 \d_{x'} \left( b^{--}_{-k} \right)^{\mathfrak{e}} \Big) + \cD_{k}.
\end{align}
Now, using Proposition \ref{prop: form of b_-(k+1) to highest order}, as well as the fact that $b_1$ is a multiple of the identity, we see that
\begin{align}\label{-- inductive step}
    \left( b^{--}_{-(k+1)} \right)^{\mathfrak{o}} &= \frac{1}{2|\x|} \Big( \d_n \left( b^{--}_{-k} \right)^{\mathfrak{o}} + \cD_{k+1,k-1} \cdot \left( b^{--}_0\right)^{\mathfrak{o}} + \cD_{k+1,k-1} \cdot \id \Big) + \cD_{k}.
\end{align}
By assumption, the odd part of $b^{--}_{-k}$ has the form
\begin{equation}\label{inductive hypo for --}
    \left( b^{--}_{-k} \right)^{\mathfrak{o}} = \frac{1}{2|\x|} \left( \frac{i}{2^{k-1}|\x|^{k-1}} g^{\a \b} \d_n^{k} \left( \kappa_A^{--}(\d_{\a}) \right) \hat{\x}_{\b} + \cD_{k,k-1} \cdot \left( b_0^{--} \right)^{\mathfrak{o}} + \cD_{k,k-1} \cdot \id \right) + \cD_{k-1}.
\end{equation}
Thus, differentiating \eqref{inductive hypo for --} and plugging into \eqref{-- inductive step}, we obtain \eqref{prop: precise form of b -(k+1) -- eqn}. Turning now to the even part of $b^{-+}_{-(k+1)}$, we again use equation \eqref{b -(k+1) equation} and Proposition \ref{prop: form of b_-(k+1) to highest order} to obtain
 \begin{align}
    \left( b^{-+}_{-(k+1)} \right)^{\mathfrak{e}} &= \frac{1}{2|\x|} \Big( \d_n \left( b^{-+}_{-k} \right)^{\mathfrak{e}} - (n-1) H \left( b^{-+}_{-k} \right)^{\mathfrak{e}} + b_0^{-+}\left( b^{++}_{-k} \right)^{\mathfrak{o}} + \left( b_0^{--} \right)^{\mathfrak{o}} \left( b_{-k}^{-+} \right)^{\mathfrak{o}} + \left( b_{-k}^{--} \right)^{\mathfrak{o}} b_0^{-+}  \nonumber \\
    &\hspace{95pt} + \left( b_{-k}^{-+} \right)^{\mathfrak{o}} \left( b_0^{++} \right)^{\mathfrak{o}} - i\d_{\x} \left( b^{-+}_{-k} \right)^{\mathfrak{o}} \d_{x'} b_1 - i\d_{\x} b_1 \d_{x'} \left( b^{-+}_{-k} \right)^{\mathfrak{o}} \Big) + \cD_{k}.
\end{align}
Using Proposition \ref{prop: form of b_-(k+1) to highest order} once more, the above reduces to
 \begin{align}\label{-+ inductive step}
    \left( b^{-+}_{-(k+1)} \right)^{\mathfrak{e}} &= \frac{1}{2|\x|} \Big( \d_n \left( b^{-+}_{-k} \right)^{\mathfrak{e}} + \left( b_0^{--} \right)^{\mathfrak{o}} \left( b_{-k}^{-+} \right)^{\mathfrak{o}}  + \left( b_{-k}^{-+} \right)^{\mathfrak{o}} \left( b_0^{++} \right)^{\mathfrak{o}} + \gamma^n \slashed{\cD}_{k+1,k} \Big) + \cD_{k}.
\end{align}
By assumption, the even part of $b_{-k}^{-+}$ has the form
\begin{equation}\label{inductive hypo -+}
    \left( b^{-+}_{-k} \right)^{\mathfrak{e}} = \frac{1}{2|\x|} \left( - \frac{1}{2^{k-1}|\x|^{k-1}} \d_n^{k-1} q_0^{-+} + \left( b_0^{--} \right)^{\mathfrak{o}} \gamma^n \slashed{\cD}_{k, k-1} + \gamma^n \slashed{\cD}_{k, k-1} \left( b_0^{++} \right)^{\mathfrak{o}} + \gamma^n \slashed{\cD}_{k, k-1}  \right) + \cD_{k-1}.
\end{equation}
Thus, differentiating \eqref{inductive hypo -+} and plugging into \eqref{-+ inductive step}, we obtain \eqref{prop: precise form of b -(k+1) -+  eqn}. Equation \eqref{normal derivatives of q_0} follows easily from \eqref{q_0^-+ form}. This completes the proof of Proposition \ref{prop: precise form of b -(k+1)}.
\end{proof}

Finally, to end our analysis of the symbol of $\Lambda_{g,A,m,\sU}$, we consider the form that the symbol $b_{-(k+1)}$ has when evaluated at $x_0$ with respect to our chosen frames. Recall that when evaluated at $x_0$, we have that $\kappa_A^{\pm \pm}$ vanishes and $b^{\pm \pm}_0$ is a multiple of the identity. Therefore, the following result follows easily from Proposition \ref{prop: precise form of b -(k+1)}.

\begin{proposition}\label{prop: precise form at x_0}
    For $k \geq 0$, the symbol $b_{-(k+1)}$ has the following form when evaluated at $x_0$:
    \begin{align}
        \left( b^{--}_{-(k+1)} \right)^{\mathfrak{o}} \Big|_{x_0} &= \frac{1}{2|\x|} \left( \frac{i}{2^k|\x|^k} g^{\a \b} \d_n^{k+1} \left( \kappa_A^{--}(\d_{\a}) \right) \hat{\x}_{\b} + \cD_{k+1,k} \cdot \id \right) \bigg|_{x_0} + \cD_{k}, \label{prop: precise form at x_0 -- eqn} \\
        \left( b^{-+}_{-(k+1)} \right)^{\mathfrak{e}} \Big|_{x_0} &= \frac{1}{2|\x|} \left( - \frac{1}{2^k|\x|^k} \d_n^{k} \fF_A^{-+} + \gamma^n \slashed{\cD}_{k+1, k}  \right) \bigg|_{x_0} + \cD_{k}. \label{prop: precise form at x_0 -+ eqn}
    \end{align}
    Moreover, it is clear from \eqref{prop: precise form at x_0 -+ eqn} that the even part of $b^{-+}_{-(k+1)}$ at $x_0$ is of the form $\gamma^n \slashed{\cD}_{k+1}$.
\end{proposition}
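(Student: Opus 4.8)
The plan is to specialize the general formulas of Proposition \ref{prop: precise form of b -(k+1)} to the point $x_0$, using the two simplifications established just before that proposition: at $x_0$ our chosen frames make $\kappa_A^{\pm\pm}$ vanish, and consequently $b_0^{\pm\pm}$ reduces to a multiple of the identity (by \eqref{b_0 -- even}--\eqref{b_0 -+ even}, since the only surviving term in $(b_0^{\pm\pm})^{\mathfrak{o}}$ involves $\kappa_A^{\pm\pm}$ and the Christoffel-type expressions, and in our frame $\omega^a_{\ bc}$ also vanishes at $x_0$, so in fact $(b_0^{\pm\pm})^{\mathfrak{o}}|_{x_0}$ is itself a multiple of the identity). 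I would not re-run the induction; instead I would simply substitute into the already-proven identities \eqref{prop: precise form of b -(k+1) -- eqn}, \eqref{prop: precise form of b -(k+1) -+  eqn}, and \eqref{normal derivatives of q_0}.

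First, for the odd part of $b^{--}_{-(k+1)}$: in \eqref{prop: precise form of b -(k+1) -- eqn} the second term $\cD_{k+1,k}\cdot(b_0^{--})^{\mathfrak{o}}$ becomes, at $x_0$, a quantity of the form $\cD_{k+1,k}\cdot\id$ (absorbing it into the third term), since $(b_0^{--})^{\mathfrak{o}}|_{x_0}$ is a multiple of the identity. The leading term $\tfrac{i}{2^k|\x|^k}g^{\a\b}\d_n^{k+1}(\kappa_A^{--}(\d_\a))\hat\x_\b$ is untouched — note that the normal derivative of $\kappa_A^{--}$ need not vanish at $x_0$ even though $\kappa_A^{--}$ itself does. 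This yields \eqref{prop: precise form at x_0 -- eqn} directly.

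Next, for the even part of $b^{-+}_{-(k+1)}$: in \eqref{prop: precise form of b -(k+1) -+  eqn} the two middle terms $(b_0^{--})^{\mathfrak{o}}\gamma^n\slashed{\cD}_{k+1,k}$ and $\gamma^n\slashed{\cD}_{k+1,k}(b_0^{++})^{\mathfrak{o}}$ again collapse at $x_0$ to expressions of the form $\gamma^n\slashed{\cD}_{k+1,k}$, being (multiple of identity)$\times\gamma^n\slashed{\cD}_{k+1,k}$, so they merge with the final $\gamma^n\slashed{\cD}_{k+1,k}$ term. It remains to evaluate $-\tfrac{1}{2^k|\x|^k}\d_n^k q_0^{-+}$ at $x_0$: apply \eqref{normal derivatives of q_0}, and observe that at $x_0$ both $\kappa_A^{--}(\d_\b)$ and $\kappa_A^{++}(\d_\b)$ vanish, so the two bilinear terms drop out entirely, leaving $\d_n^k q_0^{-+}|_{x_0}=\d_n^k\fF_A^{-+}+\gamma^n\slashed{\cD}_{k+1,k}+\cD_k$. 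Substituting gives \eqref{prop: precise form at x_0 -+ eqn}. The final assertion — that $(b^{-+}_{-(k+1)})^{\mathfrak{e}}|_{x_0}$ has the form $\gamma^n\slashed{\cD}_{k+1}$ — then follows because $\fF_A^{-+}=\tfrac12\sum_{j,k}\gamma^j\gamma^k F_A(e_j,e_k)$ restricted to the off-diagonal block is, by the Clifford relations and \eqref{form of kappa -+}-type bookkeeping on which products of gamma matrices send $\bV_E^+$ to $\bV_E^-$, precisely a sum of terms $\gamma^n\gamma^a$ with coefficients in $\cD_1$ (the curvature components are in $\cD_1$), i.e.\ of the form $\gamma^n\slashed{\cD}_1\subseteq\gamma^n\slashed{\cD}_{k+1}$.

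There is essentially no obstacle here beyond careful bookkeeping — the proposition is a corollary of Proposition \ref{prop: precise form of b -(k+1)}. The one point requiring a little care is verifying that $\fF_A^{-+}$ genuinely lies in $\gamma^n\slashed{\cD}_1$: one must check that the purely tangential part $\sum_{a<b}\gamma^a\gamma^b F_A(e_a,e_b)$ preserves the $\pm$ decomposition (it commutes with $\gamma^n\Pi$) and hence contributes nothing to the off-diagonal block, so that $\fF_A^{-+}$ comes entirely from the mixed terms $\gamma^a\gamma^n F_A(e_a,e_n)=-\gamma^n\gamma^a F_A(e_a,e_n)$, which is manifestly in $\gamma^n\slashed{\cD}_1$. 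Everything else is direct substitution and absorption of lower-order terms into the $\cD_k$ and $\slashed{\cD}_{k+1,k}$ symbols.
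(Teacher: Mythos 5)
Your proposal is correct and takes the same route as the paper, which simply asserts that Proposition \ref{prop: precise form at x_0} ``follows easily'' from Proposition \ref{prop: precise form of b -(k+1)} together with the vanishing of $\kappa_A^{\pm\pm}$ (hence of the off-identity part of $b_0^{\pm\pm}$) at $x_0$; you are filling in exactly that specialization step, including the verification that $\fF_A^{-+}$ lies in $\gamma^n\slashed{\cD}_1$, which the paper leaves implicit. One tiny clarification: in the last paragraph the needed fact is that $\d_n^k\fF_A^{-+}\in\gamma^n\slashed{\cD}_{k+1}$ (the normal derivatives raise the order, with the constant gamma factors unchanged), rather than the weaker containment $\gamma^n\slashed{\cD}_1\subseteq\gamma^n\slashed{\cD}_{k+1}$ you quote, but your preceding sentence already establishes exactly what is needed.
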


Propositions \ref{prop: form of b_-(k+1) to highest order} and \ref{prop: precise form at x_0} concerning the highest order terms in the symbol of $\Lambda_{g,A,m,\sU}$ shall play an important role in our analysis of the symbol of $\Theta_{g,A,m,\sU}$ in the following section.

\subsection{The symbol of $\Theta_{g,A,m,\sU}$ and proof of Theorem \ref{main theorem: boundary determination, intro} for $n \geq 3$}\label{subsec: symbol of Theta}

In this section we shall prove our boundary determination result by first using the relation between $\Theta_{g,A,m,\sU}$ and the Dirichlet-to-Neumann map to compute the symbol of $\Theta_{g,A,m, \sU}$ in terms of the symbol of $\Lambda_{g,A,m, \sU}$, and then using the precise form of the symbol of $\Lambda_{g,A,m, \sU}$ derived in Section \ref{subsec: symbol of Lambda} to show that one can invert the expressions to obtain the normal derivatives of the metric and connection at the boundary, modulo the appropriate symmetries.

Let us recall equation \eqref{relation between Theta and Lambda, - eqn} relating $\Lambda_{g,A,m,\sU}$ and $\Theta_{g,A,m,\sU}$,
\begin{equation}\label{relation between Theta and Lambda, - eqn second time}
    \Lambda^{-+}  + \Lambda^{--} \Theta   = \gamma^n \left( \gamma^a \nabla_{a}^A - m \right) - \frac{n-1}{2} H  + \frac{n-1}{2} H \Theta.
\end{equation}
Writing the symbols of $\Lambda$ and $\Theta$ as $b(x,\x)$ and $\theta(x,\x)$ respectively, equation \eqref{relation between Theta and Lambda, - eqn second time} becomes
\begin{equation}\label{relation: full symbol eqn}
    b^{-+} + b^{--} \# \theta = \gamma^n \left( \gamma^a \nabla^A_a - m \right) - \frac{n-1}{2} H + \frac{n-1}{2} H \theta,
\end{equation}
where $\#$ denotes the composition of symbols. Let us consider a local expression for the first term on the right hand side of \eqref{relation: full symbol eqn}. Recall that we let $\kappa_A$ denote the connection $1$-form corresponding to the sum of the spin connection and $A$, so that
\begin{equation}
    \kappa_A(X) := -\frac{1}{2} \sum_{i < j} \omega^i_{\ j}(X) \gamma^i \gamma^j + A(X).
\end{equation}
Moreover, we let $h$ denote the matrix relating our chosen orthonormal frame $(e_a)$ over $\sU$ to the coordinate frame $(\d_{\a})$ over $\sU$, so that we have
\begin{equation}
    e_a = h^{\a}_{\ a} \d_{\a}.
\end{equation}
Thus, in a local trivialization, we compute using the Clifford relations,
\begin{align}
    \gamma^n \gamma^a \nabla^A_a \vp &= \gamma^n \gamma^a e_a(\vp) + \gamma^n \gamma^a \kappa_A(e_a) \vp \nonumber \\
    &= \gamma^n \gamma^a h^{\a}_{\ a} \d_{\a} \vp - \frac{1}{2} \gamma^n \gamma^a \sum_{i < j} \omega^i_{\ j}(e_a) \gamma^i \gamma^j \vp + \gamma^n \gamma^a A(e_a) \nonumber \\
    &= \gamma^n \gamma^a h^{\a}_{\ a} \d_{\a} \vp - \frac{1}{2}  \sum_{b} \omega^b_{\ a n} \gamma^n \gamma^a \gamma^b \gamma^n \vp - \frac{1}{2}  \sum_{b < c} \omega^b_{\ a c} \gamma^n \gamma^a \gamma^b \gamma^c \vp + \gamma^n \gamma(A) \vp \nonumber \\
    &= \gamma^n \gamma^a h^{\a}_{\ a} \d_{\a} \vp + \frac{1}{2}  \sum_{b} \omega^b_{\ a n} \gamma^a \gamma^b \vp - \frac{1}{2}  \sum_{b < c} \omega^b_{\ a c} \gamma^n \gamma^a \gamma^b \gamma^c \vp + \gamma^n \gamma(A) \vp \nonumber \\
    &= \gamma^n \left( \gamma^a h^{\a}_{\ a} \d_{\a} + \Omega   + \gamma(A) \right) \vp  - \frac{1}{2} \omega^n_{\ a b} \gamma^a \gamma^b \vp, \label{local expression of dirac op on boundary}
\end{align}
where we have defined the endomorphism
\begin{equation}\label{definition of Omega}
    \Omega := - \frac{1}{2}  \sum_{b < c} \omega^b_{\ a c}  \gamma^a \gamma^b \gamma^c,
\end{equation}
and let $\gamma(A)$ denote $\gamma^a A(e_a)$. Note that by the computations in Lemma \ref{lemma: term is mean curvature}, the final term in equation \eqref{local expression of dirac op on boundary} can be rewritten in terms of the mean curvature, and thus we have
\begin{equation}
    \gamma^n \left( \gamma^a \nabla^A_a - m \right) = \gamma^n \left( \gamma^a h^{\a}_{\ a} \d_{\a} + \Omega + \gamma(A) - m \right) + \frac{n-1}{2} H. 
\end{equation}
Therefore, equation \eqref{relation: full symbol eqn} reduces to
\begin{equation}\label{local full symbol relation}
    b^{-+} + b^{--} \#\, \theta = \gamma^n \left( \gamma^a h^{\a}_{\ a} \d_{\a} + \Omega + \gamma(A) - m \right) + \frac{n-1}{2} H \theta.
\end{equation}
Finally, we substitute the asymptotic series
\begin{equation}
    b(x,\x) = \sum_{k=0}^{\infty} b_{1-k}(x,\x), \ \ \ \ \ \ \theta(x,\x) = \sum_{k=0}^{\infty} \theta_{-k}(x,\x),
\end{equation}
where $b_{-k}(x,\x)$ and $\theta_{-k}(x,\x)$ are positive-homogeneous of degree $-k$ in $\x$, and collect terms of similar degrees. The degree $1$ part of equation \eqref{local full symbol relation} has already been considered in Proposition \ref{prop: principal symbol}, and as observed there, it gives us the principal symbol of $\Theta_{g,A,m,\sU}$,
\begin{equation}
    \theta_0(x,\x) = -i \gamma^n \frac{\gamma(\x)}{|\x|}.
\end{equation}
An immediate consequence of the Clifford relations is that the conformal class of $g|_{\sU}$ is determined by the principal symbol $\theta_0$. Indeed, we have
\begin{align}
     \theta_0(x,\x) \theta_0(x,\eta)  +  \theta_0(x,\eta) \theta_0(x,\x) = -\frac{\gamma(\x) \gamma(\eta) + \gamma(\eta) \gamma(\x)}{|\x||\eta|} = \frac{2g(\x,\eta)}{|\x| |\eta|}.  \label{determining angles}
\end{align}
The right-hand side of \eqref{determining angles} is equal to the cosine of the angle between the rays determined by $\x$ and $\eta$. In particular, this allows us to determine all angles between directions tangent to $\d M$ at a point $x \in \sU$, and hence the norms of covectors only up to a scaling factor. Therefore, the conformal structure of $g|_{\sU}$ is determined from the principal symbol $\theta_0(x,\x)$. \\

\noindent {\bf Determining $g|_{\sU}$ when $m \neq 0$, and $g|_{\sU}$ up to a constant when $m = 0$}

\vspace{10pt}

\noindent Let us now turn to the degree $0$ part of equation \eqref{local full symbol relation}. Since the degree $0$ part of $b^{--} \#\, \theta$ is
\begin{equation}
    \left( b^{--}\#\, \theta \right)_0 = b^{--}_1 \theta_1 + b^{--}_0 \theta_0 - i \sum_{\a} \d_{\x_{\a}} b^{--}_1 \cdot \d_{x^{\a}} \theta_0,
\end{equation}
we obtain
\begin{equation}\label{master relation degree 0 part}
    b^{-+}_0 + b^{--}_1 \theta_1 + b^{--}_0 \theta_0 - i \sum_{\a} \d_{\x_{\a}} b^{--}_1 \cdot \d_{\a} \theta_0 = \gamma^n \left( \Omega + \gamma(A) - m \right) + \frac{n-1}{2} H \theta_0.
\end{equation}
Since $b_1 = -|\x| \id$, we can invert this equation to solve for $\theta_{-1}$,
\begin{equation}\label{solve for theta_-1}
    \theta_{-1} = -\frac{1}{|\x|} \left( \gamma^n(\Omega + \gamma(A) - m) + \frac{n-1}{2} H \theta_0 - b_0^{-+} - b_0^{--}\theta_0 + i \sum_{\a} \d_{\x_{\a}} b^{--}_1 \cdot \d_{\a} \theta_0 \right).
\end{equation}
Note that
\begin{equation}
    \d_{\x_{\a}} b^{--}_1 = -g^{\a \b} \hat{\x}_{\b},
\end{equation}
where we recall that $\hat{\x}$ denotes the unit vector in the $\x$ direction; see \eqref{unit vector}. Moreover, we have
\begin{align}
    \d_{\a} \theta_0 &= -\d_{\a} \left( i \gamma^n \gamma^a h^{\b}_{\ a} \hat{\x}_{\b} \right) \nonumber \\
    &= -i\gamma^n \gamma^a \left( \d_{\a} h^{\b}_{\ a} \hat{\x}_{\b} - \frac{1}{2} h^{\b}_{\ a} \d_{\a}g^{\g \delta} \hat{\x}_{\b \g \delta} \right).
\end{align}
Therefore, the final term in \eqref{solve for theta_-1} is
\begin{equation}\label{sum term in theta_-1}
    i \sum_{\a} \d_{\x_{\a}} b^{--}_1 \cdot \d_{\a} \theta_0 = -\gamma^n \gamma^a \left( g^{\a \lambda} \d_{\a} h^{\b}_{\ a} \hat{\x}_{\lambda \beta} - \frac{1}{2} g^{\a \lambda} h^{\b}_{\ a} \d_{\a} g^{\g \delta} \hat{\x}_{\lambda \b \g \delta} \right).
\end{equation}
We now want to consider the part of $\theta_{-1}$ that is even in $\x$. From \eqref{solve for theta_-1} we have
\begin{equation}\label{theta_-1 form in terms of b}
    \theta_{-1}^{\mathfrak{e}} = -\frac{1}{|\x|} \left( \gamma^n \left( \Omega + \gamma(A) - m \right) - \left( b_0^{-+} \right)^{\mathfrak{e}} - \left( b_0^{--} \right)^{\mathfrak{o}} \theta_0 + i \sum_{\a} \d_{\x_{\a}} b^{--}_1 \cdot \d_{\a} \theta_0 \right),
\end{equation}
where we have used the fact that $\theta_0$ is purely odd while $b_1$ is purely even. Now, by using equation \eqref{sum term in theta_-1} and the expressions \eqref{b_0 -- even}--\eqref{b_0 -+ even} derived in Section \ref{subsec: symbol of Lambda}, we obtain
\begin{align}
    \theta_{-1}^{\mathfrak{e}} &= -\frac{1}{|\x|} \bigg( \gamma^n \left( \Omega + \gamma(A) - m \right) - i\left( g^{\a \b} \kappa^{--}_A(\d_\a) \hat{\x}_{\b} - \frac{1}{2} g^{\a \b} \Gamma^{\gamma}_{\ \a \b} \hat{\x}_{\g} - \frac{1}{4} g^{\gamma \delta}  \d_{\g} g^{\a \b} \hat{\x}_{\a \b \delta} \right) \theta_0  \nonumber \\
    &\hspace{170pt} -\gamma^n \gamma^a \left( g^{\a \lambda} \d_{\a} h^{\b}_{\ a} \hat{\x}_{\lambda \beta} - \frac{1}{2} g^{\a \lambda} h^{\b}_{\ a} \d_{\a} g^{\g \delta} \hat{\x}_{\lambda \b \g \delta} \right) \bigg).
\end{align}
Multiplying on the left by $\gamma^n$ and noting that $\gamma^n$ commutes with $\kappa_A^{--}(\d_{\a})$, we have
\begin{align}
    \gamma^n\theta_{-1}^{\mathfrak{e}} &= \frac{1}{|\x|} \bigg( \Omega + \gamma(A) - m  - \left(  g^{\a \b} \kappa^{--}_A(\d_\a) \hat{\x}_{\b \lambda} - \frac{1}{2} g^{\a \b} \Gamma^{\gamma}_{\ \a \b} \hat{\x}_{\g \lambda} - \frac{1}{4} g^{\gamma \delta}  \d_{\g} g^{\a \b} \hat{\x}_{\a \b \delta \lambda} \right) h^{\lambda}_{\ a} \gamma^a   \nonumber \\
    &\hspace{170pt} - \gamma^a \left( g^{\a \lambda} \d_{\a} h^{\b}_{\ a} \hat{\x}_{\lambda \beta} - \frac{1}{2} g^{\a \lambda} h^{\b}_{\ a} \d_{\a} g^{\g \delta} \hat{\x}_{\lambda \b \g \delta} \right) \bigg). \label{- gamma^n theta_ -1}
\end{align}
Note that $\gamma^n \theta_{-1}$ is endomorphism of $\bV_E^+$. Now, since we have determined the conformal class of $g|_{\sU}$, we can choose a reference metric $\bar{g}$ over $\sU$ such that $g^{\a \b} = e^{-2u} \bar{g}^{\a \b}$ for some yet unknown smooth function $u$ on $\sU$. We want to rewrite the expression on the right-hand-side of \eqref{- gamma^n theta_ -1} in terms of the known metric $\bar{g}$ and the unknown function $u$. First, let us consider the various terms appearing in \eqref{- gamma^n theta_ -1} separately. We define
\begin{equation}\label{cR_0 of g}
    \cR_0[g,A] := \frac{1}{|\x|_g} \left( \Omega + \gamma(A) \right),
\end{equation}
\begin{equation}\label{cR_1 of g}
    \cR_1[g,A] := - \frac{1}{|\x|_g} \left(  g^{\a \b} \kappa^{--}_A(\d_\a) \hat{\x}_{\b \lambda} - \frac{1}{2} g^{\a \b} \Gamma^{\gamma}_{\ \a \b} \hat{\x}_{\g \lambda} - \frac{1}{4} g^{\gamma \delta}  \d_{\g} g^{\a \b} \hat{\x}_{\a \b \delta \lambda} \right) h^{\lambda}_{\ a} \gamma^a,
\end{equation}
\begin{equation}\label{cR_2 of g}
    \cR_2[g] := - \frac{1}{|\x|_g} \gamma^a \left( g^{\a \lambda} \d_{\a} h^{\b}_{\ a} \hat{\x}_{\lambda \beta} - \frac{1}{2} g^{\a \lambda} h^{\b}_{\ a} \d_{\a} g^{\g \delta} \hat{\x}_{\lambda \b \g \delta} \right).
\end{equation}
Note that $|\x|_g = e^{-u} |\x|_{\bar{g}}$, and thus $\hat{\x} = e^u \bar{\hat{\x}}$. Moreover, since $e_a = e^{-u} \bar{e}_a$ we have $h^{\a}_{\ a} = e^{-u}\bar{h}^{\a}_{\ a}$. Thus, one readily computes that for $\cR_2[g]$, we have
\begin{equation}\label{cR_2 g conformal}
    \cR_2[g] = - \frac{1}{|\x|_{\bar{g}}} \bar{\gamma}^a \left( \bar{g}^{\a \lambda} \d_{\a} \bar{h}^{\b}_{\ a} \bar{\hat{\x}}_{\lambda \beta} - \frac{1}{2} \bar{g}^{\a \lambda} \bar{h}^{\b}_{\ a} \d_{\a} \bar{g}^{\g \delta} \bar{\hat{\x}}_{\lambda \b \g \delta} \right) = \cR_2[\bar{g}].
\end{equation}
Given the expression of $\Omega$ in \eqref{definition of Omega}, we find that
\begin{equation}\label{Omega under conformal}
    \Omega = e^{-u} \left( \bar{\Omega} + \frac{n-2}{2} \bar{\gamma}(du) \right).
\end{equation}
Indeed, equation \eqref{Omega under conformal} could also be deduced from \eqref{Dirac operator under conformal change v2} and the fact that $\Omega$ is nothing more than the $0$-th order part of the tangential Dirac operator on the boundary. We therefore get
\begin{equation}\label{cR_0 g conformal}
    \cR_0[g,A] = \frac{1}{|\x|_{\bar{g}}} \left( \bar{\Omega} + \frac{n-2}{2} \bar{\gamma}(du) + \bar{\gamma}(A) \right) = \cR_0[\bar{g}, A] + \frac{n-2}{2} \frac{1}{|\x|_{\bar{g}}} \bar{h}^{\gamma}_{\ a} \left( \d_{\gamma} u \right) \bar{\gamma}^a.
\end{equation}
Finally, let us consider the expression $\cR_1[g,A]$. Under a conformal scaling, we have
\begin{equation}
    \Gamma^{\gamma}_{\ \a \b} = \bar{\Gamma}^{\gamma}_{\ \a \b} + \delta^{\gamma}_{\a} \d_{\b} u + \delta^{\gamma}_{\b} \d_{\a} u - \bar{g}_{\a \b} \bar{g}^{\gamma \delta} \d_{\delta} u,
\end{equation}
and so the last two terms in parentheses in \eqref{cR_1 of g} can be written as
\begin{equation}\label{cR_1 g conformal term B}
    \frac{1}{2 |\x|_g} g^{\a \b} \Gamma^{\gamma}_{\ \a \b} h^{\lambda}_{\ a}  \hat{\x}_{\g \lambda} \gamma^a = \frac{1}{|\x|_{\bar{g}}} \left( \frac{1}{2} \bar{g}^{\a \b} \bar{\Gamma}^{\gamma}_{\ \a \b} \bar{h}^{\lambda}_{\ a}  \bar{\gamma}^a - \frac{n-3}{2} \bar{g}^{\gamma \delta} \bar{h}^{\lambda}_{\ a}  \bar{\gamma}^a \d_{\delta }u \right)\bar{\hat{\x}}_{\g \lambda}, 
\end{equation}
\begin{equation}\label{cR_1 g conformal term C}
     \frac{1}{4|\x|_g} g^{\gamma \delta}  \d_{\g} g^{\a \b} h^{\lambda}_{\ a} \hat{\x}_{\a \b \delta \lambda} \gamma^a = \frac{1}{|\x|_{\bar{g}}} \left( \frac{1}{4} \bar{g}^{\gamma \delta}  \d_{\g} \bar{g}^{\a \b} \bar{h}^{\lambda}_{\ a} \bar{\gamma}^a - \frac{1}{2} \bar{g}^{\a \b} \bar{g}^{\gamma \delta} \bar{h}^{\lambda}_{\ a} \bar{\gamma}^a \d_{\gamma} u  \right) \bar{\hat{\x}}_{\a \b \delta \lambda}.
\end{equation}
As for the first term in \eqref{cR_1 of g}, we first deduce how the connection $1$-form $\kappa_A^{--}(\d_{\a})$ changes under conformal scaling from either equation \eqref{spin connection under conformal change} or direct computation. We then find that
\begin{equation}\label{cR_1 g conformal term A}
    \frac{1}{|\x|_{g}} g^{\a \b} \kappa_A^{--}(\d_{\a}) h^{\lambda}_{\ a} \gamma^a \hat{\x}_{\b \lambda} = \frac{1}{|\x|_{\bar{g}}} \bar{g}^{\a \b} \left( \bar{\kappa}_A^{--}(\d_{\a})  + \frac{1}{2}  \bar{h}^{\gamma}_{\ b} (\bar{h}^{-1})^c_{\ \a} (\d_{\gamma} u) \bar{\gamma}^b \bar{\gamma}_c + \frac{1}{2} \d_{\a} u \right) \bar{h}^{\lambda}_{\ a} \bar{\gamma}^a \bar{\hat{\x}}_{\b \lambda}.
\end{equation}
In particular, putting together \eqref{cR_1 g conformal term B}, \eqref{cR_1 g conformal term C} and \eqref{cR_1 g conformal term A}, we have
\begin{equation}\label{cR_1 g conformal}
    \cR_{1}[g,A] = \cR_{1}[\bar{g}, A] - \frac{1}{|\x|_{\bar{g}}} \left( \frac{n-1}{2} \bar{g}^{\gamma \delta} \d_{\gamma} u + \frac{1}{2} \bar{g}^{\beta \delta} \bar{h}^{\gamma}_{\ b} (\bar{h}^{-1})^c_{\ \beta} (\d_{\gamma} u) \bar{\gamma}^b \bar{\gamma}_c \right) \bar{h}^{\lambda}_{\ a} \bar{\gamma}^a \bar{\hat{\x}}_{ \delta \lambda}.
\end{equation}
Letting $\cR[g,A] = \sum_i \cR_i[g,A]$, we now put together \eqref{cR_2 g conformal}, \eqref{cR_0 g conformal}, and \eqref{cR_1 g conformal}, to get
\begin{align}
    \gamma^n \theta_{-1}^{\mathfrak{e}} &= \cR[g,A] - \frac{m}{|\x|_g} \nonumber \\
    &= \cR[\bar{g}, A] + \frac{1}{|\x|_{\bar{g}}}\bigg( \frac{n-2}{2} \bar{g}^{\delta \lambda} \bar{h}^{\gamma}_{\ a} (\d_{\gamma} u)\bar{\gamma}^a - e^u \bar{g}^{\delta \lambda} m \nonumber \\
    &\ \ \ \ \ \ \ \ \ \ \ \ \ \ \ - \left( \frac{n-1}{2} \bar{g}^{\gamma \delta} \d_{\gamma} u + \frac{1}{2} \bar{g}^{\beta \delta} \bar{h}^{\gamma}_{\ b} (\bar{h}^{-1})^c_{\ \beta} (\d_{\gamma} u) \bar{\gamma}^b \bar{\gamma}_c \right) \bar{h}^{\lambda}_{\ a} \bar{\gamma}^a \bigg) \bar{\hat{\x}}_{\delta \lambda}.
\end{align}
Finally, as the connection $A$ is yet undetermined, we separate out the part of $\cR[g,A]$ that depends on $A$, which can easily be read off from \eqref{cR_0 of g}--\eqref{cR_2 of g}. Thus we get
\begin{align}
    \gamma^n \theta_{-1}^{\mathfrak{e}} &= \cR'[\bar{g}] + \frac{1}{|\x|_{\bar{g}}}\bigg( \frac{n-2}{2} \bar{g}^{\delta \lambda} \bar{h}^{\gamma}_{\ a} (\d_{\gamma} u)\bar{\gamma}^a + \bar{g}^{\delta \lambda}\bar{h}^{\a}_{\ a}A_{\a} \bar{\gamma}^a - e^u \bar{g}^{\delta \lambda} m \nonumber \\
    &\ \ \ \ \ \ \ \ \ \ \ \ \ \ \ - \left( \bar{g}^{\a \delta} A_{\a} + \frac{n-1}{2} \bar{g}^{\gamma \delta} \d_{\gamma} u + \frac{1}{2} \bar{g}^{\beta \delta} \bar{h}^{\gamma}_{\ b} (\bar{h}^{-1})^c_{\ \beta} (\d_{\gamma} u) \bar{\gamma}^b \bar{\gamma}_c \right) \bar{h}^{\lambda}_{\ a} \bar{\gamma}^a \bigg) \bar{\hat{\x}}_{\delta \lambda} \nonumber \\
    &=: \cR'[\bar{g}] + \frac{1}{|\x|_{\bar{g}}} \cT_0^{\delta \lambda} \bar{\hat{\x}}_{\delta \lambda} \label{theta_-1 under conformal change}
\end{align}
Therefore, as the reference metric $\bar{g}$ is known, we can determine the symmetrization of the tensor $\cT_0^{\delta \lambda}$ in \eqref{theta_-1 under conformal change}. Upon contracting with $\bar{g}_{\delta \lambda}$ and collecting terms, we can determine
\begin{equation}\label{equation to determine conformal factor}
     \bar{g}_{\delta \lambda} \cT_0^{\delta \lambda} = \frac{1}{2}(n-2)^2 \bar{\gamma}(du) + (n-2) \bar{\gamma}(A) - e^u(n-1) m.
\end{equation}
Note that if $n > 2$, then the gamma matrices $\gamma^a$ are traceless when restricted to $\bV_E^+$. Indeed, have the following more general result, which we will use in the rest of this section: 

\begin{lemma}\label{lemma: trace of gamma matrices}
    Let $\gamma^I := \gamma^{a_1} \cdots \gamma^{a_k}$ be a product of $k$ distinct gamma matrices corresponding to an orthonormal basis on the boundary, which has dimension $n - 1$. Then we have $\Tr_{\bV^{+}} \gamma^I = 0$ if either $k$ is even, or if $k$ is odd and $k < n-1$.
\end{lemma}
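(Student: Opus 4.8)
The plan is to reduce the trace over $\bV^{+}$ to a fibrewise trace over $\bS$ by means of the projector $\bB^{+}=\tfrac12(1+\gamma^{n}\Pi)$ from \eqref{B pm projector}, and then to kill each resulting term by the standard ``conjugate by a gamma matrix'' trick. First I would observe that, by Remark \ref{remark: projectors and clifford relations}, each $\gamma^{a}$ with $a\in\{1,\dots,n-1\}$ commutes with $\gamma^{n}\Pi$ and hence preserves the splitting $\bS|_{\d M}=\bV^{+}\oplus\bV^{-}$; the same is then true of $\gamma^{I}$. Consequently $\bB^{+}\gamma^{I}$ is block diagonal for this splitting, with blocks $\gamma^{I}|_{\bV^{+}}$ and $0$, so that
\[
\Tr_{\bV^{+}}\gamma^{I}\;=\;\Tr_{\bS}\!\big(\bB^{+}\gamma^{I}\big)\;=\;\tfrac12\Tr_{\bS}\gamma^{I}\;+\;\tfrac12\Tr_{\bS}\!\big(\gamma^{n}\Pi\,\gamma^{I}\big).
\]
It then suffices to show that both fibrewise traces on the right vanish under the stated hypotheses.

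For the first term, $\gamma^{I}=\gamma^{a_{1}}\cdots\gamma^{a_{k}}$ is a product of $k$ distinct generators among $\gamma^{1},\dots,\gamma^{n}$ with $1\le k\le n-1$. If $k$ is even I would conjugate by $\gamma^{a_{1}}$: it commutes with the first factor and anticommutes with the remaining $k-1$, so $\gamma^{a_{1}}\gamma^{I}(\gamma^{a_{1}})^{-1}=(-1)^{k-1}\gamma^{I}=-\gamma^{I}$, whence $\Tr_{\bS}\gamma^{I}=0$. If $k$ is odd, then since $k<n$ there is an index $b\in\{1,\dots,n\}\setminus\{a_{1},\dots,a_{k}\}$; conjugating by $\gamma^{b}$ gives $(-1)^{k}\gamma^{I}=-\gamma^{I}$, so again $\Tr_{\bS}\gamma^{I}=0$. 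Thus $\Tr_{\bS}\gamma^{I}=0$ for every $1\le k\le n-1$, irrespective of parity.

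For the second term I would use $\Pi\gamma^{i}=-\gamma^{i}\Pi$ to push $\Pi$ to the right, writing $\gamma^{n}\Pi\gamma^{I}=(-1)^{k}\gamma^{n}\gamma^{I}\Pi=\pm\,\gamma^{J}\Pi$, where $\gamma^{J}$ is a product of the $k+1$ distinct generators indexed by $J=\{a_{1},\dots,a_{k},n\}$ (reordering costs only an overall sign, which is irrelevant for the trace). Conjugating $\gamma^{J}\Pi$ by a generator $\gamma^{b}$: conjugating $\Pi$ always contributes a factor $-1$, while conjugating $\gamma^{J}$ contributes $(-1)^{k}$ when $b\in J$ and $(-1)^{k+1}$ when $b\notin J$. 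When $k$ is even I take any $b\in J$, for a net factor $(-1)^{k+1}=-1$, forcing $\Tr_{\bS}(\gamma^{J}\Pi)=0$. When $k$ is odd and $k<n-1$ we have $|J|=k+1<n$, so some $b\notin J$ exists, for a net factor $(-1)^{k}=-1$, again forcing $\Tr_{\bS}(\gamma^{J}\Pi)=0$. Combined with the previous paragraph, this gives $\Tr_{\bV^{+}}\gamma^{I}=0$ in all the claimed cases.

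The computation is entirely elementary; the only point needing care is the sign bookkeeping in the two conjugation arguments, and in particular recognising why the excluded case $k=n-1$ with $k$ odd — possible only when $n$ is even, so that $\gamma^{J}$ is a scalar multiple of the full volume element $\gamma^{1}\cdots\gamma^{n}$ — genuinely escapes the method: there is no generator outside $J$, and conjugating by a generator inside $J$ yields the uninformative net factor $(-1)^{n}=+1$. I would also remark that only $\Pi^{2}=\id$ and the anticommutation relation $\gamma(X)\Pi=-\Pi\gamma(X)$ enter, and that reducibility of $\bS$ as a Clifford module is immaterial since the argument rests solely on the Clifford relations and the cyclicity of the trace.
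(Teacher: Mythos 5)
Your proof is correct but takes a genuinely different route from the paper's. The paper works directly with the restricted trace $\Tr_{\bV^{+}}$, noting that products of boundary $\gamma$'s preserve $\bV^{+}$ and that the cyclic property therefore applies to $\Tr_{\bV^{+}}$: for $k$ even one moves $\gamma^{a_k}$ to the front (picking up $(-1)^{k-1}=-1$) and cycles it back; for $k$ odd with $k<n-1$ one inserts $-\gamma^{b}\gamma^{b}$ for a boundary index $b\notin I$, anticommutes $\gamma^{b}$ past $\gamma^{I}$, and cycles. Your argument instead expands $\bB^{+}=\tfrac12(1+\gamma^{n}\Pi)$ and reduces to two fibrewise traces over $\bS$, each of which is then killed by the familiar conjugation trick. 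The trade-off is that your route is somewhat longer and requires tracking signs through $\Pi$, but it makes the failure of the excluded case $k=n-1$ odd transparent: $\Tr_{\bS}\gamma^{I}$ still vanishes, and the obstruction lives entirely in $\Tr_{\bS}(\gamma^{n}\Pi\,\gamma^{I})$, where $\gamma^{J}$ becomes (a scalar multiple of) the full volume element and no further generator is available to conjugate by. The paper's proof is tighter and is the more economical choice for the body of the text, but your decomposition is a legitimate and instructive alternative; both rest only on the Clifford relations, the anticommutation of $\Pi$ with $\gamma(X)$, and the cyclicity of the trace.
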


\begin{proof}
    If $k$ is even, then using the Clifford relations and the cyclic property of the trace, we have
    \begin{align}
        \Tr_{\bV^+} \left( \gamma^{a_1} \cdots \gamma^{a_k} \right) = -\Tr_{\bV^+} \left( \gamma^{a_k} \gamma^{a_1} \cdots \gamma^{a_{k-1}} \right) = -\Tr_{\bV^+} \left( \gamma^{a_1} \cdots \gamma^{a_{k}} \right),
    \end{align}
    and so $\Tr_{\bV^+} \gamma^I = 0$. On the other hand, if $k$ is odd and $k < n-1$, then there is $b \notin \{ a_1, \dots, a_k \}$. So, again using the Clifford relations and the cyclic property of the trace, we compute
    \begin{align}
        \Tr_{\bV^+} \gamma^I = - \Tr_{\bV^+} \left( \gamma^I \gamma^b \gamma^b \right) = \Tr_{\bV^+} \left( \gamma^b \gamma^I \gamma^b  \right) = \Tr_{\bV^+} \left( \gamma^b \gamma^b \gamma^I \right) = - \Tr_{\bV^+} \gamma^I,
    \end{align}
    and thus again in this case we have $\Tr_{\bV^+} \gamma^I = 0.$
\end{proof}

Note that the proof above fails if $k = n-1$. Indeed, in this case, one can show that $\gamma^I$ must be a multiple of the identity on $\bV^{\pm}$. This applies in particular to the single gamma matrix $\gamma^1$ in the case $n=2$, when the boundary is $1$-dimensional. We shall come back to this in Section \ref{subsec: proof for n=2}. For the time being, it is noteworthy that when $n \geq 3$, the gamma matrices $\gamma^a$ are traceless when restricted to $\bV^+$. Thus, when $m \neq 0$, we can take a trace of \eqref{equation to determine conformal factor} to obtain
\begin{equation}
    \Tr_{\bV_E^+} \left( \bar{g}_{\delta \lambda} \cT^{\delta \lambda}_0 \right) = -rm(n-1)e^u,
\end{equation}
where $r$ is the rank of $\bV_E^+$. Therefore, we can determine the conformal factor $u$ on $\sU$, and thus also the original metric $g|_{\sU}$. On the other hand, if $m = 0$, then \eqref{equation to determine conformal factor} reads
\begin{equation}\label{equation to determine conformal factor m=0}
     \bar{g}_{\delta \lambda} \cT_0^{\delta \lambda} =  \left( \frac{1}{2}(n-2)^2 \d_{\a} u + (n-2) A_{\a} \right) \bar{h}^{\alpha}_{\ a} \bar{\gamma}^a.
\end{equation}
By multiplying \eqref{equation to determine conformal factor m=0} by $\bar{\gamma}^b$, taking the partial trace over $\bV^+$, and using Lemma \ref{lemma: trace of gamma matrices}, one can obtain the coefficient of $\bar{\gamma}^b$ in \eqref{equation to determine conformal factor m=0}. By multiplying by $\bar{h}^{-1}$, we can thus determine 
\begin{equation}
    \cQ_{\a} := \frac{1}{2}(n-2)^2 \d_{\a}u + (n-2)A_{\a}.
\end{equation}
Finally, we note that since $A$ is a unitary connection, its local representative with respect to an orthonormal frame on $E$ has pure imaginary trace. This allows us to obtain
\begin{equation}
    \Re{\Tr_E (\cQ_{\a})} = \frac{1}{2}N_E(n-2)^2 \d_{\a}u,
\end{equation}
where $N_E$ is the rank of $E$. Therefore, we can determine the tangential derivatives of the conformal factor $u$ on $\sU$, and thus also the conformal factor $u$ up to a locally constant function. As this is the best we can do when $m = 0$, in that case we will need to assume that $g|_{\sU}$ has been specified in order to proceed. Either way, we henceforth assume that $g|_{\sU}$ has been determined.

We also record for later use that from \eqref{- gamma^n theta_ -1} and Lemma \ref{lemma: trace of gamma matrices}, we have
\begin{equation}
    \Tr_{\bV_E^+}\left( \gamma^n \theta_{-1}^{\mathfrak{e}} \right) = \frac{1}{|\x|} \left( \Tr{\Omega} - r m - h^{\lambda}_{\ a} g^{\a \b} \hat{\x}_{\b \lambda} \Tr\left( \kappa_A^{--}(\d_{\a}) \gamma^a \right) \right). \label{theta_ -1 trace}
\end{equation}
When $n \neq 4$, the first and last terms in \eqref{theta_ -1 trace} vanish by Lemma \ref{lemma: trace of gamma matrices}, since they involve a product of three distinct gamma matrices. They do not vanish, however, when $n = 4$. \\

\noindent {\bf Determining $A|_{\sU}$ and the traceless second fundamental form}

\vspace{10pt}

\noindent Now that the metric on the boundary and all tangential derivatives thereof are determined, we can go back to the even part of $\theta_{-1}$ in order to determine the gauge potential on the boundary. Note that proceeding as in the derivation of equation \eqref{equation to determine conformal factor}, we have
\begin{align}
    \gamma^n \theta_{-1}^{\mathfrak{e}} &= \frac{1}{|\x|} \left( \gamma(A) - g^{\a \b} A_{\a} h^{\lambda}_{\ a} \gamma^a \hat{\x}_{\b \lambda} \right) + \cD_{0,-1} \nonumber \\
    &= \gamma^a \left( g^{\b \lambda} h^{\a}_{\ a} A_{\a} - g^{\a \b} A_{\a} h^{\lambda}_{\ a}  \right) \frac{\hat{\x}_{\b \lambda}}{|\x|} + \cD_{0,-1}, \label{determining connection from theta_ -1 v1}
\end{align}
where $\cD_{0,-1}$ denotes quantities depending only on the metric on the boundary as well as tangential derivatives thereof, but not on the connection. Thus, we can determine the symmetrization of the tensor in parentheses in the final line of \eqref{determining connection from theta_ -1 v1}. Just as in \eqref{equation to determine conformal factor}, we contract with $g_{\b \lambda}$ to obtain
\begin{align}
    (n-1) h^{\a}_{\ a} A_{\a} - A_{\a} h^{\a}_{\ a} = (n-2) A_a.
\end{align}
Therefore, since we have assumed $n > 2$, the connection $A$ is determined on $\sU$ in directions along the boundary (recall that $A_n = 0$ for our choice of gauge).

We have thus far shown that the metric and connection are determined on the boundary when $m \neq 0$ by considering only the even part of $\theta_{-1}$. We now want to consider the odd part of $\theta_{-1}$, which will allows us to determine most of the normal derivatives of the metric on the boundary. To see how, we first use \eqref{solve for theta_-1} to write down the odd part,
\begin{equation}\label{theta_-1 odd part}
    \theta_{-1}^{\mathfrak{o}} = \frac{1}{|\x|} \left( \left( b_0^{-+} \right)^{\mathfrak{o}} + \left( b_0^{--} \right)^{\mathfrak{e}} \theta_0 - \frac{n-1}{2} H \theta_0  \right),
\end{equation}
which using equations \eqref{b_0 -- even}--\eqref{b_0 -+ even} becomes
\begin{align}
    \theta_{-1}^{\mathfrak{o}} &= \frac{1}{|\x|} \left( i g^{\a \b} \kappa_A^{-+}(\d_{\a}) \hat{\x}_{\b} - \frac{1}{4} \d_ng^{\a \b} \theta_0 \hat{\x}_{\a \b} \right) \nonumber \\
    &= -\frac{i}{2|\x|} \left( g^{\a \b} \sum_{b} \omega^b_{\ n}(\d_{\a}) \gamma^b \gamma^n \hat{\x}_{\b} - \frac{1}{2} \d_ng^{\a \b} \gamma^n \gamma^b h^{\lambda}_{\ b} \hat{\x}_{\a \b \lambda} \right) \nonumber \\
    &= \frac{i}{2|\x|} \gamma^n \left( g^{\a \lambda} g^{\delta \b} \omega^b_{\ n}(\d_{\delta}) + \frac{1}{2} \d_ng^{\a \b}   h^{\lambda}_{\ b}  \right)\gamma^b \hat{\x}_{\a \b \lambda}. \label{theta_-1 odd v1}
\end{align}
Note that we have
\begin{equation}
    \omega^b_{\ n}(\d_{\delta}) = g(e_b, \nabla_{\d_{\delta}} \d_n) = h^\b_{\ b} \,g(\d_{\b}, \Gamma^\lambda_{\ \delta n} \d_{\lambda}) = h^{\b}_{\ b} g_{\b \lambda} \Gamma^{\lambda}_{\ \delta n}, 
\end{equation}
and so using $\Gamma^{\lambda}_{\ \delta n} = \frac{1}{2} g^{\lambda \mu} \d_n g_{\mu \delta}$, we obtain
\begin{align}
    \omega^b_{\ n}(\d_{\delta}) &= h^{\b}_{\ b} g_{\b \lambda} \Gamma^{\lambda}_{\ \delta n} = \frac{1}{2} h^{\nu}_{\ b} g_{\nu \lambda} g^{\lambda \mu} \d_n g_{\mu \delta} = \frac{1}{2} h^{\mu}_{\ b} \d_n g_{\mu \delta}. \label{rewrite connection in terms of christoffel}
\end{align}
Plugging \eqref{rewrite connection in terms of christoffel} into \eqref{theta_-1 odd v1}, we have
\begin{align}
    \theta_{-1}^{\mathfrak{o}} &= \frac{i}{4|\x|} \gamma^n \left( g^{\a \lambda} g^{\delta \b} h^{\mu}_{\ b} \d_n g_{\mu \delta} + \d_ng^{\a \b}   h^{\lambda}_{\ b}  \right)\gamma^b \hat{\x}_{\a \b \lambda}. \label{theta_-1 odd v2}
\end{align}
From \eqref{theta_-1 odd v2}, we determine $\left( \cS_1 \right)^{\a \b \lambda}_b$, the symmetrization of the tensor in parentheses,
\begin{equation}\label{S ab lambda b}
    \left( \cS_1 \right)_b^{\a \b \lambda} := g^{(\a \lambda} g^{ \b )\delta} h^{\mu}_{\ b} \d_n g_{\mu \delta} + \d_ng^{(\a \b}   h^{\lambda)}_{\ b}.
\end{equation}
Since $g$ is determined on the boundary, we can also determine the matrix $h$ relating the coordinate and orthonormal frames. Thus, multiplying \eqref{S ab lambda b} by $(h^{-1})^{b}_{\ \gamma}$ and summing, we obtain
\begin{equation}
    (h^{-1})^{b}_{\ \gamma} \left( \cS_1 \right)^{\a \b \lambda}_b = g^{(\a \lambda} g^{ \b )\delta} \d_n g_{\gamma \delta} + \d_ng^{(\a \b}   \delta^{\lambda)}_{\gamma}.
\end{equation}
We now contract with $g_{\a \b}$ and $g_{\mu \lambda}$ to obtain
\begin{equation}\label{obtaining the SFF v1}
    g_{\a \b} g_{\mu \lambda} (h^{-1})^{b}_{\ \gamma} \left( \cS_1 \right)^{\a \b \lambda}_b = \frac{2(n-1)}{3} \left( H g_{\mu \gamma} + \frac{1}{2} \d_n g_{\mu \gamma} \right).
\end{equation}
The second term in parentheses in equation \eqref{obtaining the SFF v1} is equal to $-\mathrm{II}_{\mu \gamma}$, where $\mathrm{II}$ denotes the second fundamental form. We can therefore determine the traceless part of the second fundamental form,
\begin{equation}\label{defn of tracelss SFF}
    \Sigma_{\mu \lambda} := \mathrm{II}_{\mu \lambda} - H g_{\mu \lambda}.
\end{equation}
Note that in this way, having determined the metric on the boundary, from $\theta_{-1}$, we determine {\em almost} all of the normal derivatives of the metric on the boundary. Note that in the case $m = 0$ where we have a conformal symmetry, this is the best we can expect to do when the metric is prescribed on the boundary. Indeed, if we conformally scale the metric on $M$ by a factor $e^{2u}$, then the mean curvature $\bar{H}$ of the scaled metric on the boundary is given by
\begin{equation}
    \bar{H} = e^{-u} \left( H - \d_n u \right),
\end{equation}
and so we cannot determine the mean curvature even when $u$ is prescribed on the boundary. In the case $m = 0$, therefore, we will henceforth assume that $H|_{\sU}$ is prescribed. \\

\noindent {\bf Determining $\d_n g|_{\sU}$, $\d_n A|_{\sU}$ and $\d_n \Sigma|_{\sU}$ from $\theta_{-2}$}

\vspace{10pt} 

\noindent Before moving forward, we recall the notation introduced at the beginning of Section \ref{subsec: symbol of Lambda}, such as $\cD_{k}$, $\cD_{k,k-1}$, and $\slashed{\cD}_{k}$, which we shall use liberally in the rest of this section to keep track of known and unknown terms at each step. In light of the above discussion, we also introduce here the following new notation: let $\cD_{k+1}^*$ denote any quantity depending on $k$ normal derivatives of the metric, $k$ normal derivatives of the connection, and $k$ normal derivatives of $\Sigma$, the traceless second fundamental form. Thus $\cD_{k+1}^*$ denotes quantities depending on almost all $(k+1)$-th order normal derivatives of the metric, in that they do not depend on the $k$-th normal derivatives of the mean curvature. Note that we clearly have $\cD_k \subseteq \cD^*_{k+1} \subseteq \cD_{k+1}$. Finally, by \eqref{defn of tracelss SFF}, we have
\begin{equation}\label{normal derivatives modulo D_1*}
    \d_n g_{\a \b} = -2 H g_{\a \b} + \cD_1^*, \ \ \ \ \ \ \d_n g^{\a \b} = 2H g^{\a \b} + \cD_1^*.
\end{equation}
We will also include in the above symbols any quantities depending on the symbol of $\Theta_{g,A,m}$ and tangential derivatives thereof, as these are assumed to be known.

Moreover, since we have determined the metric on the boundary, we can now construct various orthonormal frames tangent to the boundary, and consider the symbol of $\Theta_{g,A,m,\sU}$ computed in local trivializations induced by these orthonormal frames. This freedom will allow us to greatly simplify the following computations. As indicated in the discussion following equation \eqref{b_-1 -+ even}, we will consider orthonormal frames on the boundary $(e_a)$ that are parallel at a given point $x_0 \in \sU$. We recall that this means the connection coefficients $\omega^a_{\ bc}$ vanish at $x_0$. Moreover, since the connection is also determined on the boundary, we can construct a frame of $E$ so that $A$ vanishes at $x_0$, and thus we have that $\kappa_A^{\pm \pm}$ vanishes at $x_0$ with respect to this frame. Note that by equation \eqref{theta_ -1 trace}, in such a local trivialization we have that
\begin{equation}\label{theta_ -1 trace simplified}
    \Tr_{\bV_E^+}\left( \gamma^n \theta_{-1}^{\mathfrak{e}} \right) \big|_{x_0} = -\frac{rm}{|\x|},
\end{equation}
since we have also $\Omega(x_0) = 0$. This fact shall be used below.

We are now ready to compute $\theta_{-2}$ modulo terms in $\cD_{1}^*$, which were determined above. The degree $-1$ part of equation \eqref{local full symbol relation} gives us
\begin{equation}
    b^{-+}_{-1} + (b^{--} \#\, \theta)_{-1} = \frac{n-1}{2} H \theta_{-1},
\end{equation}
and the degree $-1$ part of $b^{--} \#\, \theta$ is
\begin{equation}\label{b comp theta deg -1}
    b_{-1}^{--} \theta_0 + b_{0}^{--} \theta_{-1} + b_1^{--} \theta_{-2} - i \d_{\x} b_0^{--} \d_{x'} \theta_0 + \d_{\x}^2 b_{1} D_{x'}^2 \theta_0.
\end{equation}
Note that the last term in \eqref{b comp theta deg -1} is $\cD_0$. We therefore get
\begin{equation}\label{theta_-2 eqn}
    \theta_{-2} = \frac{1}{|\x|} \left( b_{-1}^{-+} + b_{-1}^{--} \theta_0 + b_{0}^{--} \theta_{-1} - i \d_{\x} b_0^{--} \cdot \d_{x'} \theta_0 - \frac{n-1}{2} H \theta_{-1} \right) + \cD_0.
\end{equation}
Note that using the expression \eqref{b_0 pm pm} for $b_0$, we have that
\begin{equation}
    b_0^{--} \theta_{-1} - \frac{n-1}{2} H \theta_{-1} = - \frac{1}{4} \d_n g^{\a \b} \hat{\x}_{\a \b} \theta_{-1} + \cD_0.
\end{equation}
We thus have
\begin{equation}\label{theta_-2 eqn v2}
    \theta_{-2} = \frac{1}{|\x|} \left( b_{-1}^{-+} + b_{-1}^{--} \theta_0 - \frac{1}{4} \d_n g^{\a \b} \hat{\x}_{\a \b} \theta_{-1} - i \d_{\x} b_0^{--} \cdot \d_{x'} \theta_0 \right) + \cD_0.
\end{equation}
Now, just as with $\theta_{-1}$ we want to consider the even part of $\theta_{-2}$. We have
\begin{equation}\label{theta_-2 even part}
    \theta^{\mathfrak{e}}_{-2} = \frac{1}{|\x|} \left(  \left( b_{-1}^{-+} \right)^{\mathfrak{e}} + \left( b_{-1}^{--} \right)^{\mathfrak{o}} \theta_0 - \frac{1}{4} \d_n g^{\a \b} \hat{\x}_{\a \b} \theta^{\mathfrak{e}}_{-1} - i \d_{\x} \left( b_0^{--} \right)^{\mathfrak{e}} \cdot \d_{x'} \theta_0 \right) + \cD_0.
\end{equation}
We now want to evaluate $\theta_{-2}^{\mathfrak{e}}$ at $x_0$. Using \eqref{b_-1 -+ even at x_0 v2}, \eqref{b_-1 -- odd at x_0 v2}, and \eqref{b_0 pm pm}, we get that
\begin{equation}\label{theta_-2 even at x_0}
    \theta^{\mathfrak{e}}_{-2} \big|_{x_0} = \frac{1}{|\x|} \left( \frac{1}{2|\x|} i g^{\a \b} \d_n \left( \kappa^{--}_A(\d_\a) \right) \hat{\x}_{\b} \theta_0 - \frac{1}{4} \d_n g^{\a \b} \hat{\x}_{\a \b} \theta^{\mathfrak{e}}_{-1} + \gamma^n \slashed{\cD}_1 \right) \bigg|_{x_0} + \cD_0.
\end{equation}
Therefore, when we multiply by $\gamma^n$ and take a trace over $\bV_E^+$ just as with $\theta_{-1}$, the third term in \eqref{theta_-2 even at x_0} vanishes. By equation \eqref{theta_ -1 trace simplified} we have
\begin{align}
    \Tr_{\bV_E^+} \left( \gamma^n \theta^{\mathfrak{e}}_{-2} \right) \big|_{x_0} &= \frac{rm}{4|\x|^2} \d_n g^{\a \b} \hat{\x}_{\a \b} + \frac{i}{2|\x|^2} g^{\a \b} \Tr_{\bV_E^+} \left( \gamma^n \d_n\left( \kappa_A^{--}(\d_{\a})  \right) \theta_0 \right) \hat{\x}_{\b} + \cD_0 \nonumber \\
    &= \frac{rm}{4|\x|^2} \d_n g^{\a \b} \hat{\x}_{\a \b} - \frac{1}{4|\x|^2} g^{\a \b} \left( h^{-1} \right)^{d}_{\ \a} h^{\gamma}_{\ a} \sum_{b < c} \d_n\omega^b_{\ dc} \Tr\left( \gamma^n \gamma^b \gamma^c \gamma^n \gamma^a \right) \hat{\x}_{\b \gamma} + \cD_0 \nonumber \\
    &= \frac{1}{2|\x|^2} \left( rm \d_n g^{\b \gamma} + g^{\a \b} \left( h^{-1} \right)^{d}_{\ \a} h^{\gamma}_{\ a} \sum_{b < c} \d_n\omega^b_{\ dc} \Tr\left( \gamma^b \gamma^c \gamma^a \right) \right) \hat{\x}_{\b \gamma} + \cD_0. \label{theta_-2 trace at x_0}
\end{align}
Since the metric is known on $\sU$, we can determine the symmetric tensor
\begin{equation}
    \cT_1^{\b \gamma} := rm \d_n g^{\b \gamma} + g^{\a (\b} h^{\gamma)}_{\ a} \left( h^{-1} \right)^{d}_{\ \a}  \sum_{b < c} \d_n \omega^b_{\ dc} \Tr\left( \gamma^b \gamma^c \gamma^a \right)
\end{equation}
If $n \neq 4$, then the boundary is not $3$-dimensional, and so by Lemma \ref{lemma: trace of gamma matrices}, we can recover $\d_n g^{\b \gamma}$ at the point $x_0$ when $m \neq 0$. Since $x_0$ is arbitrary, the normal derivative of the metric is determined on $\sU$. Let us therefore consider the case $n = 4$, in which the boundary is $3$-dimensional and the trace in \eqref{theta_-2 trace at x_0} does not necessarily vanish. In this case, we have
\begin{align}
    g_{\b \gamma} \cT_1^{\b \gamma} &= \frac{rm(n-1)}{2}H + \sum_{b < c} \d_n\omega^b_{\ ac} \Tr\left( \gamma^b \gamma^c \gamma^a \right) \nonumber \\
    &= \frac{rm(n-1)}{2} H - \d_n\left( \omega^1_{\ 23} + \omega^2_{\ 31} + \omega^3_{\ 12} \right) \Tr\left( \gamma^1 \gamma^2 \gamma^3 \right). \label{trace of T_1 for n=4}
\end{align}
In order to analyze the last term in \eqref{trace of T_1 for n=4}, we prove the following 
\begin{lemma}\label{lemma: upsilon derivatives}
    Let $n = 4$ and $k \geq 0$. Then, with respect to an orthonormal frame on the boundary that is parallel at $x_0$ and parallel along $e_n$, we have 
    \begin{equation}
        \d_n^k \left( \omega^1_{\ 23} + \omega^2_{\ 31} + \omega^3_{\ 12} \right) \big|_{x_0} = \cD_k^*.
    \end{equation}
\end{lemma}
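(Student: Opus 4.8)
The plan is to differentiate a single connection coefficient in the normal direction, sum over the cyclic triple defining $\Upsilon := \omega^1_{\ 23} + \omega^2_{\ 31} + \omega^3_{\ 12}$, and invoke the first Bianchi identity to kill the curvature contribution. First I would record the following identity, valid near $\sU$ for tangential indices $a,b,c$:
\[
\d_n\,\omega^b_{\ ac} = \langle R(e_n,e_c)e_a,\, e_b\rangle + \sum_d \omega^n_{\ cd}\,\omega^b_{\ ad}.
\]
This comes from writing $\d_n\omega^b_{\ ac} = e_n\langle e_b,\nabla_{e_c}e_a\rangle = \langle e_b, \nabla_{e_n}\nabla_{e_c}e_a\rangle$ — the term $\langle \nabla_{e_n}e_b,\cdot\rangle$ drops because the frame is parallel along $e_n$ and $e_n$ is geodesic, so $\nabla_{e_n}e_i=0$ for every $i$ — then expanding $\nabla_{e_n}\nabla_{e_c} = R(e_n,e_c) + \nabla_{e_c}\nabla_{e_n} + \nabla_{[e_n,e_c]}$, discarding $\nabla_{e_c}\nabla_{e_n}e_a$, and inserting $[e_n,e_c] = -\nabla_{e_c}e_n = \sum_d \omega^n_{\ cd}e_d$ (the Weingarten relation, with $\omega^n_{\ cd}$ symmetric). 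Specializing to $n=4$ and summing this over $(b,a,c)\in\{(1,2,3),(2,3,1),(3,1,2)\}$, the three curvature terms $\langle R(e_n,e_3)e_2, e_1\rangle + \langle R(e_n,e_1)e_3, e_2\rangle + \langle R(e_n,e_2)e_1, e_3\rangle$ reorganize, using only the skew-symmetries and the pair symmetry of $R$, into $\langle R(e_1,e_2)e_3 + R(e_2,e_3)e_1 + R(e_3,e_1)e_2,\, e_n\rangle$, which vanishes by the first Bianchi identity. Hence
\[
\d_n\Upsilon = \sum_d \bigl( \omega^n_{\ 3d}\,\omega^1_{\ 2d} + \omega^n_{\ 1d}\,\omega^2_{\ 3d} + \omega^n_{\ 2d}\,\omega^3_{\ 1d} \bigr),
\]
a finite sum of products of a second-fundamental-form coefficient $\omega^n_{\ cd}\in\cD_1$ with a tangential connection coefficient $\omega^b_{\ ad}\in\cD_0$.

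The rest is bookkeeping with the symbol classes. For $k=0$ the claim is immediate: since the frame is parallel at $x_0$ and, by the Gauss formula, its ambient connection coefficients with tangential indices coincide with the intrinsic boundary ones, $\omega^b_{\ ac}(x_0)=0$, so $\Upsilon(x_0)=0$. For $k\geq 1$ I would apply $\d_n^{k-1}$ to the formula for $\d_n\Upsilon$ and expand by the Leibniz rule; each summand then has the form $\binom{k-1}{i}(\d_n^i\omega^n_{\ cd})(\d_n^{k-1-i}\omega^b_{\ ad})$ for some $0\leq i\leq k-1$, with the two factors lying in $\cD_{i+1}$ and $\cD_{k-1-i}$ respectively (using $\d_n\cD_j=\cD_{j+1}$). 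The only summand that can involve the full $k$-th normal derivative of the metric — equivalently the $k$-th normal derivative of the mean curvature — is the one with $i=k-1$, namely $(\d_n^{k-1}\omega^n_{\ cd})\,\omega^b_{\ ad}$; but this vanishes at $x_0$ because $\omega^b_{\ ad}(x_0)=0$. Every other summand has $i\leq k-2$, hence lies in $\cD_{i+1}\cdot\cD_{k-1-i}\subseteq\cD_{k-1}\subseteq\cD_k^*$. Summing, $\d_n^k\Upsilon\big|_{x_0}\in\cD_k^*$, which is the assertion.

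The one genuinely delicate point is the Bianchi cancellation: one must correctly match the cyclic sum of the three terms $\langle R(e_n,e_c)e_a, e_b\rangle$ with a first-Bianchi combination after permuting the arguments of $R$ through its symmetries. This cancellation is essential — if that sum did not vanish, a surviving curvature term of class $\cD_k$, which is \emph{not} multiplied by any undifferentiated $\omega^b_{\ ac}$ and so is not killed at $x_0$, could carry the $k$-th normal derivative of $H$, and the lemma would fail. Once the cancellation is in hand, the remainder is routine manipulation with the classes $\cD_j$ and $\cD_j^*$, the key structural observation being that the single dangerous term in the Leibniz expansion is proportional to an undifferentiated connection coefficient and therefore dies at the parallel point $x_0$.
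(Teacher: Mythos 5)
Your proof is correct and takes essentially the same route as the paper's: derive a first-derivative identity for each $\omega^b_{\ ac}$ whose curvature contributions cancel in the cyclic sum by the first Bianchi identity, leaving $\d_n\Upsilon$ as a sum of products of second-fundamental-form coefficients with tangential Christoffel symbols, then propagate to higher $k$ using the vanishing of the tangential Christoffel symbols at $x_0$. The only organizational difference is that the paper splits $\mathrm{II} = Hg + \Sigma$ at the outset so as to write $\d_n\Upsilon = H\Upsilon + \cD_1^*$ and then runs the induction on that compact identity, whereas you keep $\mathrm{II}$ intact and do the Leibniz expansion of $\d_n^{k-1}$ directly; both localize the dangerous $H$-dependence in a term proportional to an undifferentiated $\omega^b_{\ ad}$, which dies at the parallel point. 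Two small remarks: your statement ``the $k$-th normal derivative of the mean curvature'' is off by one --- since $\omega^n_{\ cd}\in\cD_1$, the $i=k-1$ Leibniz summand carries $\d_n^{k-1}H$, which is exactly the derivative that $\cD_k^*$ excludes; and your index convention $\omega^b_{\ ac}=\langle e_b,\nabla_{e_c}e_a\rangle$ is the transpose of the paper's $\omega^b_{\ ac}=\omega^b_{\ c}(e_a)$, so your $\Upsilon$ is the negative of the paper's, which is harmless here.
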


\begin{proof}
    Let $\Upsilon := \omega^1_{\ 23} + \omega^2_{\ 31} + \omega^3_{\ 12}$. By our choice of frame, we have $\Upsilon|_{x_0} = 0$. Let us now consider the normal derivative of $\Upsilon$, and for this, let us first consider the normal derivative of one term:
    \begin{align}
        \d_n \omega^1_{\ 23} &= \nabla_{e_n} g(e_1, \nabla_{e_2} e_3) \nonumber \\
        &= g(\nabla_{e_n} e_1, \nabla_{e_2} e_3) + g(e_1, \nabla_{e_n} \nabla_{e_2} e_3).
    \end{align}
    The first term vanishes since our frame is parallel along $e_n$. For the second term we use
    \begin{equation}
        \nabla_X \nabla_Y Z - \nabla_Y \nabla_X Z - \nabla_{[X,Y]} Z = R(X,Y)Z
    \end{equation}
    to obtain
    \begin{align}
        \d_n \omega^1_{\ 23} &= g(e_1, \nabla_{e_2} \nabla_{e_n} e_3) + g(e_1, \nabla_{[e_n, e_2]} e_3) + g(e_1, R(e_n, e_2)e_3) \nonumber \\
        &= g(e_1, \nabla_{[e_n, e_2]} e_3) + g(e_1, R(e_n, e_2)e_3). \label{upsilon lemma eqn 1}
    \end{align}
    By the symmetries of the Riemann tensor, the last term in \eqref{upsilon lemma eqn 1} is
    \begin{equation}
        g(R(e_n, e_2)e_3, e_1) = -g(R(e_2, e_n)e_3, e_1) = -g(R(e_3, e_1)e_2, e_n).
    \end{equation}
    For the first term in \eqref{upsilon lemma eqn 1}, note that
    \begin{align}\label{upsilon lemma bracket eqn}
        [e_n, e_2] = [\d_n, h^{\a}_{\ 2} \d_{\a}] = \left( \d_n h^{\a}_{\ 2} \right) \d_{\a}.
    \end{align}
    Now, using the fact that $\nabla_{e_n} e_a = 0$ near $\d M$ and \eqref{normal derivatives modulo D_1*}, we find
    \begin{align}
        0 = \nabla_{e_n} e_a &= \left( \d_n h^{\a}_{\ a} \right) \d_{\a} + h^{\a}_{\ a} \Gamma^\b_{\ n \a} \d_{\b} \nonumber \\
        &= \left( \d_n h^{\a}_{\ a} \right) \d_{\a} + \frac{1}{2} h^{\a}_{\ a} g^{\b \delta} \d_n g_{\delta \a} \d_{\b} \nonumber \\
        &= \left( \d_n h^{\a}_{\ a} \right) \d_{\a} - H h^{\b}_{\ a} \d_{\b} + \cD_1^* \nonumber \\
        &= \left( \d_n h^{\a}_{\ a} \right) \d_{\a} - H e_a + \cD_1^*. \label{normal derivative of h modulo D_1*}
    \end{align}
    Thus, equation \eqref{upsilon lemma bracket eqn} becomes
    \begin{equation}
        [e_n, e_2] = \left( \d_n h^{\a}_{\ 2} \right) \d_{\a} = H e_2 + \cD_1^*,
    \end{equation}
    and therefore, equation \eqref{upsilon lemma eqn 1} becomes
    \begin{align}
        \d_n \omega^1_{\ 23} &= g(e_1, \nabla_{[e_n, e_2]} e_3) + g(e_1, R(e_n, e_2)e_3) \nonumber \\
        &= Hg(e_1, \nabla_{e_2} e_3) -g(R(e_3, e_1)e_2, e_n) + \cD_1^* \nonumber \\
        &= H \omega^1_{\ 23} - g(R(e_3, e_1)e_2, e_n) + \cD_1^*. \label{upsilon lemma eqn 2}
    \end{align}
    To get $\d_n \Upsilon$, we sum the cyclic permutations of equation \eqref{upsilon lemma eqn 2}. The sum of terms involving the Riemann curvature vanish due to the algebraic Bianchi identity. We therefore obtain
    \begin{equation}\label{upsilon lemma main eqn}
        \d_n \Upsilon = H \Upsilon + \cD_1^*.
    \end{equation}
    Since $\Upsilon|_{x_0} = 0$, we get $\d_n \Upsilon|_{x_0} = \cD_1^*$. For $k > 1$, note that equation \eqref{upsilon lemma main eqn} holds not only on $\d M$, but in a neighbourhood of the boundary. Therefore, we can differentiate equation \eqref{upsilon lemma main eqn} to obtain
    \begin{equation}
        \d_n^k \Upsilon = \left( \d_n^{k-1} H \right) \Upsilon + \cD_k^*
    \end{equation}
    for $k > 1$ by induction. Evaluating at $x_0$ and using $\Upsilon|_{x_0} = 0$ completes the proof.
\end{proof}

Thus, by Lemma \ref{lemma: upsilon derivatives}, equation \eqref{trace of T_1 for n=4} becomes
\begin{align}
    g_{\b \gamma} \cT_1^{\b \gamma} &= \frac{rm(n-1)}{2} H - \d_n\left( \omega^1_{\ 23} + \omega^2_{\ 31} + \omega^3_{\ 12} \right) \Tr\left( \gamma^1 \gamma^2 \gamma^3 \right) \nonumber \\
    &= \frac{rm(n-1)}{2} H + \cD_1^*.
\end{align}
Therefore, even in the case $n = 4$, we can determine $H$ at $x_0$ if $m \neq 0$. Since $x_0$ is arbitrary, the normal derivative of $g$ is determined on $\sU$. In the case $m = 0$, we of course must assume that the mean curvature is prescribed on $\sU$ in order to proceed with the proof.

Having determined the normal derivatives of $g$, we can determine all quantities in $\cD_{1,0}$, i.e. all quantities depending on at most $1$ normal derivative of the metric and on $0$ normal derivatives of the connection. Now we shall go back to the even part of $\theta_{-2}$ in order to determine the normal derivatives of the connection. From equations \eqref{theta_-2 even part} and Proposition \ref{prop: precise form of b -(k+1)} we obtain
\begin{align}
    \theta_{-2}^{\mathfrak{e}} &= \frac{1}{|\x|} \left( \left( b_{-1}^{-+} \right)^{\mathfrak{e}} + \left( b^{--}_{-1} \right)^{\mathfrak{o}} \theta_0 + \cD_{1,0} \right) \nonumber \\
    &= \frac{1}{2|\x|^2} \left( -q_0^{-+} + i \theta_0 g^{\a \b} \d_n A_{\a} \hat{\x}_{\b} + \cD_{1,0} \right) \nonumber \\
    &= -\frac{1}{2|\x|^2} \left( \fF_A^{-+} - \gamma^n \gamma^a h^{\gamma}_{\ a} g^{\a \b} \d_n A_{\a} \hat{\x}_{\b \gamma} + \cD_{1,0} \right) \nonumber \\
    &= -\frac{1}{2|\x|^2} \gamma^n \gamma^a \left( F_A(e_n,e_a)g^{\b \gamma} -  h^{\gamma}_{\ a} g^{\a \b} \d_n A_{\a} \right) \hat{\x}_{\b \gamma} + \cD_{1,0}  \nonumber \\
    &= -\frac{1}{2|\x|^2} \gamma^n \gamma^a \left(h^{\a}_{\ a} g^{\b \gamma} \d_nA_{\a} -  h^{\gamma}_{\ a} g^{\a \b} \d_n A_{\a} \right) \hat{\x}_{\b \gamma} + \cD_{1,0}.
\end{align}
We thus determine the symmetrized tensor in parentheses, which on contracting with $g_{\b \gamma}$ yields
\begin{equation}
    (n-1) h^{\a}_{\ a} \d_nA_{\a} - h^{\gamma}_{\ a} \d_n A_{\gamma} = (n-2) h^{\a}_{\ a} \d_n A_{\a}.
\end{equation}
We can therefore recover $\d_n A_{\a}$ on the boundary as $n > 2$. Note here that the determination of $\d_n A|_{\sU}$ is valid with respect to a fixed local trivialization, as we are not evaluating at $x_0$.

We have so far recovered from $\theta_{-2}^{\mathfrak{e}}$ the first derivatives of the metric when $m \neq 0$, and the first derivatives of the connection. We now want to recover $\d_n \Sigma_{\a \b}$ from the odd part of $\theta_{-2}$, in complete analogy with how we recovered $\Sigma_{\a \b}$ from $\theta_{-1}^{\mathfrak{o}}$. So, having determined all quantities in $\cD_1$, we can now use equations \eqref{theta_-2 eqn}, \eqref{b_-1 -+ highest order} and \eqref{b_-1 -- highest order} to write
\begin{align}
    \theta^{\mathfrak{o}}_{-2} &= \frac{1}{|\x|} \left( \left( b_{-1}^{-+} \right)^{\mathfrak{o}} + \left( b_{-1}^{--} \right)^{\mathfrak{e}} \theta_0 \right) + \cD_1 \nonumber \\
    &= \frac{1}{2|\x|^2} \left(  i g^{\a \b} \d_n \left( \kappa_A^{-+}(\d_{\a}) \right) \hat{\x}_{\b} - \frac{1}{4} \theta_0\,  \d^2_n g^{\a \b} \hat{\x}_{\a\b} \right) + \cD_1 \nonumber \\
    &= \frac{i}{4|\x|^2} \gamma^n \gamma^a \left(- g^{\a \b} \d_n \left( \omega^n_{\ a}(\d_{\a}) \right) \hat{\x}_{\b} + \frac{1}{2} h^{\gamma}_{\ a}  \d^2_n g^{\a \b} \hat{\x}_{\a\b \gamma} \right) + \cD_1 \nonumber \\
    &= \frac{i}{4|\x|^2} \gamma^n \gamma^a \left( -g^{\a \gamma}g^{\delta \b} \d_n \left( \omega^n_{\ a}(\d_{\delta}) \right) + \frac{1}{2} h^{\gamma}_{\ a}  \d^2_n g^{\a \b}  \right)\hat{\x}_{\a\b \gamma} + \cD_1. 
\end{align}
Using equation \eqref{rewrite connection in terms of christoffel} and the fact that $\d_n h^{\mu}_{\ a} = \cD_1$, we have
\begin{equation}
    \theta_{-2}^{\mathfrak{o}} = \frac{i}{8|\x|^2} \gamma^n \gamma^a \left( g^{\a \gamma}g^{\delta \b} h^{\mu}_{\ a} \d^2_ng_{\mu \delta} +  h^{\gamma}_{\ a}  \d^2_n g^{\a \b}  \right)\hat{\x}_{\a\b \gamma} + \cD_1,
\end{equation}
and we can therefore recover the symmetric tensor
\begin{equation}
    \left( \cS_2 \right)^{\a \b \gamma}_{a} := g^{( \a \gamma}g^{\b) \delta} h^{\mu}_{\ a} \d_ng_{\mu \delta} +  \d^2_n g^{(\a \b} h^{\gamma)}_{\ a}.
\end{equation}
Proceeding as in equation \eqref{obtaining the SFF v1}, we compute
\begin{align}
    \frac{3}{2(n-1)}g_{\a \b} g_{\gamma \sigma} \left( h^{-1} \right)^a_{\ \lambda} \left( \cS_2 \right)^{\a \b \gamma}_{a} &:= g_{\lambda \sigma} g_{\a \b} \d_n^2 g^{\a \b} + \frac{1}{2} \d_n^2 g_{\lambda \sigma} \nonumber \\
    &=  g_{\lambda \sigma} \d_n \left( g_{\a \b} \d_n g^{\a \b} \right) + \frac{1}{2} \d_n^2 g_{\lambda \sigma}  + \cD_1 \nonumber \\
    &=  g_{\lambda \sigma} \d_n H + \frac{1}{2} \d_n^2 g_{\lambda \sigma} + \cD_1 \nonumber \\
    &= \d_n \left( H g_{\lambda \sigma} + \frac{1}{2} \d_n g_{\lambda \sigma} \right) + \cD_1 \nonumber \\
    &= - \d_n \Sigma_{\lambda \sigma} + \cD_1. \label{obtaining derivative of SFF}
\end{align}
And thus the normal derivative of $\Sigma$ is determined on the boundary, along with all quantities in $\cD_2^*$. That is to say, we have determined {\em almost} all second derivatives of the metric, save for those depending on $\d_n H$. We are now ready to proceed to the inductive step. \\

\noindent {\bf Determining $\d_n^{k+1} g|_{\sU}$, $\d_n^{k+1}A|_{\sU}$ and $\d_n^{k+1} \Sigma|_{\sU}$ from $\theta_{-(k+2)}$}

\vspace{10pt} 

\noindent Suppose now that we have determined all quantities in $\cD_{k+1}^*$ on $\sU$. That is, we have determined the normal derivatives of the metric and connection up to order $k$, and the normal derivatives of $\Sigma$ up to order $k$. We shall proceed in analogy with the above, first determining the $(k+1)$-th normal derivative of $g$ from the even part of $\theta_{-(k+2)}$ when $m \neq 0$, then determining the $(k+1)$-th normal derivative of the connection, and finally determining the $(k+1)$-th derivative of $\Sigma$ from the odd part of $\theta_{-(k+2)}$. We start with the degree $-(k+1)$ part of equation \eqref{local full symbol relation},
\begin{align}
    b^{-+}_{-(k+1)} + \left( b^{--} \#\, \theta \right)_{-(k+1)} = \frac{n-1}{2} H \theta_{-(k+1)}. \label{degree -(k+1) part of main eqn}
\end{align}
Note that since the symbol of $\theta$ is known and $H$ is determined, the right-hand side of \eqref{degree -(k+1) part of main eqn} is $\cD_k$. Now, using the composition rule for symbols and Proposition \ref{prop: form of b_-(k+1) to highest order}, we have
\begin{align}
    \left( b^{--} \#\, \theta \right)_{-(k+1)} &= \sum_{0 \leq i,j \leq k+2} \hspace{5pt} \sum_{|\n| + i + j = k+2} \frac{(-i)^{|\n|}}{\nu!} \d_{\x}^{\nu} b^{--}_{1-i} \, \d_{x'}^{\n} \theta_{-j} \nonumber \\
    &= b_1^{--} \theta_{-(k+2)} + b^{--}_{-(k+1)} \theta_0 + b_{-k}^{--} \theta_{-1} -i \d_{\x} b_{-k}^{--} \d_{x} \theta_0 + \cD_{k}. \label{composition b theta degree -(k+1)}
\end{align}
Plugging \eqref{composition b theta degree -(k+1)} into \eqref{degree -(k+1) part of main eqn} and solving for $\theta_{-(k+2)}$, we get
\begin{equation}
    \theta_{-(k+2)} = \frac{1}{|\x|} \left( b^{-+}_{-(k+1)} + b^{--}_{-(k+1)} \theta_0 + b_{-k}^{--} \theta_{-1} -i \d_{\x} b_{-k}^{--} \d_{x} \theta_0 \right) + \cD_k. \label{theta_-(k+2)}
\end{equation}
Using Proposition \ref{prop: form of b_-(k+1) to highest order}, we find that the even part of $\theta_{-(k+2)}$ is
\begin{align}
    \theta^{\mathfrak{e}}_{-(k+2)} &= \frac{1}{|\x|} \bigg( \left( b^{-+}_{-(k+1)} \right)^{\mathfrak{e}} + \left( b^{--}_{-(k+1)} \right)^{\mathfrak{o}} \theta_0 + \left( b_{-k}^{--} \right)^{\mathfrak{e}} \theta_{-1}^{\mathfrak{e}} + \left( b_{-k}^{--} \right)^{\mathfrak{o}} \theta_{-1}^{\mathfrak{o}} \nonumber \\
    &\hspace{250pt} -i \d_{\x} \left( b_{-k}^{--} \right)^{\mathfrak{e}} \d_{x} \theta_0 \bigg) + \cD_k \nonumber \\
    &= \frac{1}{|\x|} \bigg( \left( b^{-+}_{-(k+1)} \right)^{\mathfrak{e}} + \left( b^{--}_{-(k+1)} \right)^{\mathfrak{o}} \theta_0 + \left( b_{-k}^{--} \right)^{\mathfrak{e}} \theta_{-1}^{\mathfrak{e}} + \gamma^n \slashed{\cD}_{k+1,k} \bigg) + \cD_k  \label{theta_ -(k+2) even part}
\end{align}
As before, we compute this symbol with respect to an orthonormal frame that is parallel at $x_0$, and then evaluate at $x_0$. Propositions \ref{prop: form of b_-(k+1) to highest order} and \ref{prop: precise form at x_0} then give us
\begin{align}
    \theta^{\mathfrak{e}}_{-(k+2)} \big|_{x_0} &= \frac{1}{|\x|} \bigg(  -\frac{1}{2^{k+1}|\x|^{k+1}}  \d_n^{k} \fF_A^{-+} + \left( \frac{i}{2^{k+1}|\x|^{k+1}} g^{\a \b} \d_n^{k+1} \left( \kappa_A^{--}(\d_{\a}) \right) \hat{\x}_{\b} + \cD_{k+1,k} \cdot \id \right) \theta_0 \nonumber \\
    &\hspace{200pt} + \left( b_{-k}^{--} \right)^{\mathfrak{e}} \theta_{-1}^{\mathfrak{e}} + \gamma^n \slashed{\cD}_{k+1,k} \bigg)\bigg|_{x_0} + \cD_k \nonumber \\
    &= \frac{1}{|\x|} \bigg( \gamma^n \slashed{\cD}_{k+1} +  \frac{i}{2^{k+1}|\x|^{k+1}} g^{\a \b} \d_n^{k+1} \left( \kappa_A^{--}(\d_{\a}) \right) \hat{\x}_{\b} \theta_0  + \left( b_{-k}^{--} \right)^{\mathfrak{e}}  \theta_{-1}^{\mathfrak{e}}  \bigg)\bigg|_{x_0} + \cD_k \nonumber \\
    &= \gamma^n \slashed{\cD}_{k+1} + \frac{1}{2^{k+1}|\x|^{k+1}} \left( \frac{i}{|\x|} g^{\a \b} \d_n^{k+1} \left( \kappa_A^{--}(\d_{\a}) \right) \hat{\x}_{\b} \theta_0 + \frac{1}{2} \left( \d^{k+1}_n g^{\a \b} \right) \hat{\x}_{\a \b} \theta_{-1}^{\mathfrak{e}} \right)\bigg|_{x_0} + \cD_k. \label{theta_ (k+2) even part at x_0}
\end{align}
It immediately follows from equation \eqref{theta_ -1 trace simplified} that
\begin{align}
    \Tr_{\bV_E^+} \left( \gamma^n \theta_{-(k+2)}^{\mathfrak{e}} \right) \Big|_{x_0} &= \frac{1}{2^{k+1}|\x|^{k+2}} \Big( i g^{\a \b} \Tr\left( \gamma^n \d_n^{k+1} \left( \kappa_A^{--}(\d_{\a}) \right) \theta_0 \right) \hat{\x}_{\b} - \frac{rm}{2} \d_n^{k+1}g^{\a \b} \hat{\x}_{\a \b} \Big)\Big|_{x_0} + \cD_k. \label{trace of theta -(k+2)}
\end{align}
Now, if $n \neq 4$, then the first term in \eqref{trace of theta -(k+2)} vanishes, and if $m \neq 0$, then we can recover $\d_n^{k+1}g^{\a \b}$ at $x_0$. Since $x_0$ is arbitrary, the $(k+1)$-th normal derivatives of $g$ are determined on $\sU$. If $n = 4$, then the first term does not necessarily vanish, and we argue in analogy with \eqref{theta_-2 trace at x_0}:
\begin{align}
    \Tr_{\bV_E^+} \left( \gamma^n \theta_{-(k+2)}^{\mathfrak{e}} \right) \Big|_{x_0} &= \frac{1}{2^{k+1}|\x|^{k+2}} \Big( i g^{\a \b} \Tr\left( \gamma^n \d_n^{k+1} \left( \kappa_A^{--}(\d_{\a}) \right) \theta_0 \right) \hat{\x}_{\b} - \frac{rm}{2} \d_n^{k+1}g^{\a \b} \hat{\x}_{\a \b} \Big)\Big|_{x_0} + \cD_k \nonumber \\
    &= -\frac{1}{2^{k+2}|\x|^{k+2}} \bigg( g^{\delta \b} h^{\a}_{\ c} \sum_{a<b} \left( \d_n^{k+1} \omega^a_{\ b}(\d_{\delta}) \right) \Tr\left( \gamma^n \gamma^a \gamma^b \gamma^n \gamma^c \right) \hat{\x}_{\a \b} \nonumber \\
    &\hspace{200pt} + rm \d_n^{k+1}g^{\a \b} \hat{\x}_{\a \b} \bigg)\bigg|_{x_0} + \cD_k \nonumber \\
    &= -\frac{1}{2^{k+2}|\x|^{k+2}} \bigg( g^{\delta \b} h^{\a}_{\ c} \left( h^{-1} \right)^d_{\ \delta} \sum_{a<b} \left( \d_n^{k+1} \omega^a_{\ db} \right) \Tr\left( \gamma^a \gamma^b \gamma^c \right) \nonumber \\
    &\hspace{180pt} + rm \d_n^{k+1}g^{\a \b}  \bigg) \bigg|_{x_0} \hat{\x}_{\a \b} + \cD_k,
\end{align}
and thus we can recover the symmetric tensor
\begin{equation}
    \cT_{k+1}^{\a \b} := rm \d_n^{k+1}g^{\a \b} + g^{\delta (\b} h^{\a)}_{\ c} \left( h^{-1} \right)^d_{\ \delta} \sum_{a<b} \left( \d_n^{k+1} \omega^a_{\ db} \right) \Tr\left( \gamma^a \gamma^b \gamma^c \right),
\end{equation}
at $x_0$, which upon contracting with $g_{\a \b}$ yields
\begin{align}
    g_{\a \b}\cT_{k+1}^{\a \b} \big|_{x_0} &= rm g_{\a \b} \left( \d_n^{k+1}g^{\a \b} \right) \Big|_{x_0} + \sum_{a < b} \left(  \d_n^{k+1} \omega^a_{\ cb} \right) 
    \Big|_{x_0} \Tr\left( \gamma^a \gamma^b \gamma^c \right) \nonumber \\
    &= \frac{rm(n-1)}{2} \d^k_n H \big|_{x_0} - \d_n^{k+1}\left(\omega^{1}_{\ 23} + \omega^2_{\ 31} + \omega^3_{\ 12} \right) \big|_{x_0} \Tr\left( \gamma^1 \gamma^2 \gamma^3 \right) + \cD_{k}. \label{contracting T_ k+1}
\end{align}
By Lemma \ref{lemma: upsilon derivatives}, the second term in equation \eqref{contracting T_ k+1} is $\cD_{k+1,k}^*$, and is therefore determined. Hence, if $m \neq 0$, we can determine $\d_n^k H$ at $x_0$. Since we can do this for any $x_0 \in \sU$, it follows that we can determine $\d_n^k H$ on the boundary. Along with $\cD_{k+1,k}^*$, this determines all quantities in $\cD_{k+1,k}$. We now want to go back to $\theta_{-(k+2)}^{\mathfrak{e}}$ to determine the $(k+1)$-th normal derivatives of the connection. From equation \eqref{theta_ -(k+2) even part} and Proposition \ref{prop: precise form at x_0}, we have
\begin{align}
    \theta^{\mathfrak{e}}_{-(k+2)} &= \frac{1}{2^{k+1} |\x|^{k+2}} \left( - \d_n^{k} \fF_A^{-+} + i g^{\a \b} \d_n^{k+1} A_{\a} \hat{\x}_{\b} \theta_0 \right) + \cD_{k+1,k} \nonumber \\
    &= -\frac{1}{2^{k+1} |\x|^{k+2}} \gamma^n \gamma^a \left( g^{\b \gamma} h^{\a}_{\ a} \d^{k+1}_n A_{\a} - g^{\a \b} h^{\gamma}_{\ a} \d_n^{k+1} A_{\a} \right) \hat{\x}_{\b \gamma} + \cD_{k+1,k}. \label{recovering d_n^(k+1) connection tensor}
\end{align}
We thus recover the symmetrized tensor in parentheses, which upon contracting with $g_{\b \gamma}$ yields
\begin{equation}
    (n-1) h^{\a}_{\ a} \d_n^{k+1}A_{\a} - h^{\a}_{\ a} \d_n^{k+1} A_{\a} = (n-2) h^{\a}_{\ a} \d_n^{k+1} A_{\a}.
\end{equation}
We can therefore recover $\d_n^{k+1}A_{\a}$ on $\sU$ when $n > 2$, and thus determine all quantities in $\cD_{k+1}$. All that remains is to determine $\d_n^{k+1} \Sigma_{\a \b}$ from the odd part of $\theta_{-2}$. To this end, we use equation \eqref{theta_-(k+2)} and Proposition \ref{prop: form of b_-(k+1) to highest order} to write down the odd part,
\begin{align}
    \theta_{-(k+2)}^{\mathfrak{o}} &= \frac{1}{|\x|} \left( \left( b_{-(k+1)}^{-+} \right)^{\mathfrak{o}} + \left( b_{-(k+1)}^{--} \right)^{\mathfrak{e}} \theta_0 \right) + \cD_{k+1} \nonumber \\
    &= \frac{1}{2^{k+1}|\x|^{k+2}} \left( i g^{\delta \b} \d_n^{k+1}\left( \kappa_A^{-+}\left( \d_{\delta} \right) \right) \hat{\x}_{\b} - \frac{1}{4} \d_n^{k+2}g^{\a \b} \hat{\x}_{\a \b} \theta_0 \right) + \cD_{k+1} \nonumber \\
    &= \frac{i}{2^{k+2}|\x|^{k+2}} \gamma^n \gamma^a \left( g^{\a \gamma} g^{\b \delta} \d_n^{k+1} \left( \omega^a_{\ n}\left( \d_{\delta} \right) \right) + \frac{1}{2} h^{\gamma}_{\ a} \d_n^{k+2}g^{\a \b} \right) \hat{\x}_{\a \b \gamma} + \cD_{k+1}. \label{theta_-(k+2) odd part}
\end{align}
Now, differentiating equation \eqref{rewrite connection in terms of christoffel} yields
\begin{equation}
    \d_n^{k+1}  \left( \omega^a_{\ n}\left( \d_{\delta} \right) \right) = \frac{1}{2} h^{\mu}_{\ a} \d_n^{k+2} g_{\mu \delta} + \cD_{k+1},
\end{equation}
whereupon equation \eqref{theta_-(k+2) odd part} becomes
\begin{align}
    \theta_{-(k+2)}^{\mathfrak{o}} &= \frac{i}{2^{k+3}|\x|^{k+2}} \gamma^n \gamma^a \left( g^{\a \gamma} g^{\b \delta} h^{\mu}_{\ a} \d_n^{k+2} g_{\mu \delta}  + h^{\gamma}_{\ a} \d_n^{k+2}g^{\a \b} \right) \hat{\x}_{\a \b \gamma} + \cD_{k+1}. \label{theta_-(k+2) odd part v2}
\end{align}
We are thus able to recover the symmetric tensor
\begin{equation}
    \left( \cS_{k+2} \right)^{\a \b \gamma}_{a} := g^{(\a \gamma} g^{\b) \delta} h^{\mu}_{\ a} \d_n^{k+2} g_{\mu \delta}  +  \d_n^{k+2}g^{( \a \b} h^{\gamma)}_{\ a}. 
\end{equation}
Proceeding as in equation \eqref{obtaining derivative of SFF}, we compute
\begin{align}
    \frac{3}{2(n-1)}g_{\a \b} g_{\gamma \sigma} \left( h^{-1} \right)^a_{\ \lambda} \left( \cS_{k+2} \right)^{\a \b \gamma}_{a} &:= g_{\lambda \sigma} g_{\a \b} \d_n^{k+2} g^{\a \b} + \frac{1}{2} \d_n^{k+2} g_{\lambda \sigma} \nonumber \\
    &=  g_{\lambda \sigma} \d^{k+1}_n H + \frac{1}{2} \d_n^{k+2} g_{\lambda \sigma} + \cD_{k+1} \nonumber \\
    &= \d^{k+1}_n \left( H g_{\lambda \sigma} + \frac{1}{2} \d_n g_{\lambda \sigma} \right) + \cD_{k+1} \nonumber \\
    &= - \d^{k+1}_n \Sigma_{\lambda \sigma} + \cD_{k+1}.
\end{align}
This completes the inductive step, and thus also the proof of Theorem \ref{main theorem: boundary determination, intro} for $n \geq 3$. \hfill \qed

\subsection{Proof of Theorem \ref{main theorem: boundary determination, intro} for $n = 2$}\label{subsec: proof for n=2}

It is clear that the above proof of Theorem \ref{main theorem: boundary determination, intro} for $n \geq 3$ does not carry over to $n = 2$ for several reasons. First of all, at each step in Section \ref{subsec: symbol of Theta}, we could only determine the highest order derivative of the connection $A$ with a coefficient of $n - 2$. Indeed, one can check that the expression in \eqref{recovering d_n^(k+1) connection tensor} vanishes when $n = 2$. This suggests that neither $A$ nor its derivatives can be determined on the boundary from the symbol of $\Theta_{g,A,m,\sU}$ when $n = 2$. We shall therefore deal only with the case in which the auxiliary bundle $E$ and connection $A$ are not present.

Furthermore, an important tool in the proof above for $n \geq 3$ is the discarding of many terms in $\slashed{\cD}_{k+1,k}$ by taking a trace and using Lemma \ref{lemma: trace of gamma matrices}. However, as mentioned before, when $n = 2$, the gamma matrix $\gamma^1$ along the boundary is not traceless on $\bV^+$. The proof above, however, could be replicated for $n = 2$ if we had a way to eliminate terms in $\slashed{\cD}_{k+1,k}$. This can be done as follows. First note that $\gamma^1$ preserves $\bV^+$, so that $\bV^+$ is a representation of $\Cl_1 \cong \C$ at each point, and is therefore decomposable into $1$-dimensional irreducible representations. This can be done by diagonalizing $\gamma^1$ over $\sU$ and restricting to the space spanned by a single eigenvector. On this space, endomorphisms are canonically identified with $\C$, and since $(\gamma^1)^2 = -1$, we have that $\gamma^1$ acts as $\pm i$. Hence, we can eliminate terms in $\slashed{\cD}_{k+1,k}$ by taking real parts, provided we keep careful track of the real and imaginary parts of the coefficients appearing in the symbol $\theta_{-(k+2)}$. 

Finally, we shall deal only with the case $m \neq 0$. Indeed, as above, to deal with $m = 0$ we would have to assume that the mean curvature and all of its normal derivatives are prescribed on the boundary, but when the boundary is $1$-dimensional, this is equivalent to prescribing the normal derivatives of the metric. In fact, one can see from the definition that for $n = 2$, the traceless part of the second fundamental form vanishes identically.

So, let us indicate how the proof of Theorem \ref{main theorem: boundary determination, intro} given in Section \ref{subsec: symbol of Theta} for $n \geq 3$ can be modified in the case $n = 2$ with additional hypotheses indicated above. First, we must return to Section \ref{subsec: symbol of Lambda} and consider the symbol of $\Lambda_{g,m,\sU}$ for $n = 2$. Of course $b_1$ is unchanged, while $b_0$ is still given by \eqref{b_0 -- even}--\eqref{b_0 -+ even}, except for the fact that the even part of $b_0^{\pm \pm}$ vanishes, as can be readily checked from equation \eqref{b_0 -- even}, and we also have $\kappa_A^{\pm \pm} = 0$. This has several consequences. First of all, $b_0^{\pm \pm}$ is always a multiple of the identity. Secondly, when one computes $b_{-1}$ using \eqref{b_-1 --}--\eqref{b_-1 -+}, one finds that
\begin{equation}
    \left( b_{-1}^{--} \right)^{\mathfrak{o}} = \frac{i \hat{\x}}{|\x|} \cD_1^{\bR} + \cD_0, \ \ \ \ \left( b_{-1}^{-+} \right)^{\mathfrak{e}} = \frac{\hat{\x}^2}{|\x|} \cD_{1}^{\bR} \cdot \gamma^n \gamma^1 + \cD_0,
\end{equation}
where the symbol $\cD_{k}^{\bR}$ denotes any {\em real}-valued scalar function depending on at most $k$ normal derivatives of the metric on the boundary. On the other hand, since the even part of $b_0^{\pm \pm}$ vanishes, one finds from equations \eqref{scalar curvature}--\eqref{b_-1 -- highest order} that
\begin{align}
    \left( b_{-1}^{--} \right)^{\mathfrak{e}} &= -\frac{1}{4 |\x|} \d_n H + \cD_1.
\end{align}

Using the above forms for $b_0$ and $b_{-1}$, the following lemma can be easily be proved by induction using equation \eqref{b -(k+1) equation}, keeping careful track of imaginary units and noting that $i$ only appears when paired with $\x$, while all other coefficients are real.
\begin{lemma}\label{lemma: n=2 form of b}
    Let $n=2$. For $k \geq 0$, the symbols $b_{-(k+1)}$ have the following form,
    \begin{align}
        \left( b^{--}_{-(k+1)} \right)^{\mathfrak{o}} &= \frac{i \hat{\x}}{|\x|^{k+1}} \cD_{k+1}^{\bR} + \cD_{k}, \\
        \left( b^{-+}_{-(k+1)} \right)^{\mathfrak{e}} &= \frac{\hat{\x}^2}{|\x|^{k+1}} \cD_{k+1}^{\bR} \gamma^n \gamma^1 + \cD_{k}, \\
        \left( b^{--}_{-(k+1)} \right)^{\mathfrak{e}} &= -\frac{1}{2^{k+2}|\x|^{k+1}} \d^{k+1}_n H + \cD_{k+1}.
    \end{align}
\end{lemma}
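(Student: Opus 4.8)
The plan is to prove Lemma \ref{lemma: n=2 form of b} by induction on $k$, closely paralleling the proof of Proposition \ref{prop: precise form of b -(k+1)} but tracking the real/imaginary structure of coefficients rather than discarding terms via the trace of gamma matrices. The base case $k=0$ is exactly the content of the explicit formulas for $b_0$ and $b_{-1}$ established just before the lemma statement: we have $\left(b_0^{\pm\pm}\right)^{\mathfrak{e}} = 0$, $\kappa_A^{\pm\pm}=0$, so $b_0^{--}$ is a scalar multiple of the identity, and the displayed formulas for $\left(b_{-1}^{--}\right)^{\mathfrak{o}}$, $\left(b_{-1}^{-+}\right)^{\mathfrak{e}}$, and $\left(b_{-1}^{--}\right)^{\mathfrak{e}}$ match the claimed shapes with $k=0$.

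For the inductive step I would feed the assumed forms for $b_{-k}$ into the recursion \eqref{b -(k+1) equation}, namely
\begin{equation*}
    b_{-(k+1)} = \frac{1}{2|\x|}\Big( \d_n b_{-k} - (n-1)H b_{-k} + b_0 b_{-k} + b_{-k} b_0 - i\d_{\x} b_{-k}\,\d_{x'} b_1 - i\d_{\x} b_1\,\d_{x'} b_{-k} \Big) + \cD_k,
\end{equation*}
with $n=2$ so the $H$ term is order-one in $H$ and absorbed appropriately, and $b_1 = -|\x|\,\id$ a scalar. The key structural observations are: (i) $b_1$ and $b_0^{\pm\pm}$ are scalar multiples of the identity in $\x$, so multiplication by them and $\d_\x$-differentiation of them preserves the ``scalar coefficient times $\gamma^n\gamma^1$ (or times $\id$)'' shape; (ii) $\d_n$ raises the normal-derivative count by one, taking $\cD_{k+1}^{\bR}$ to $\cD_{k+2}^{\bR}$; (iii) the factor $\hat\x = \x/|\x|$ is the only place an $i$ gets attached in the odd parts, because in dimension two the odd-in-$\x$ part of a symbol component is a single power of $\hat\x$ (degree one or three, reduced via $\hat\x^2 = $ scalar) while the even part carries even powers. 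One then checks componentwise that $\left(b_{-(k+2)}^{--}\right)^{\mathfrak{o}}$ picks up its leading term from $\d_n\left(b_{-(k+1)}^{--}\right)^{\mathfrak{o}}$ plus cross terms $b_0^{--}b_{-(k+1)}^{--}$ etc., all of which are $\frac{i\hat\x}{|\x|^{k+2}}\cD_{k+2}^{\bR}$ modulo $\cD_{k+1}$; similarly $\left(b_{-(k+2)}^{-+}\right)^{\mathfrak{e}}$ gets its leading part from $\d_n\left(b_{-(k+1)}^{-+}\right)^{\mathfrak{e}}$ and from $b_0^{-\mp}$-type couplings, which contribute $\frac{\hat\x^2}{|\x|^{k+2}}\cD_{k+2}^{\bR}\gamma^n\gamma^1$; and $\left(b_{-(k+2)}^{--}\right)^{\mathfrak{e}}$ gets its highest term $-\frac{1}{2^{k+3}|\x|^{k+2}}\d_n^{k+2}H$ from $\frac{1}{2|\x|}\d_n\left(b_{-(k+1)}^{--}\right)^{\mathfrak{e}}$, exactly as in the $n\ge 3$ analysis of $\left(b_{-(k+1)}^{--}\right)^{\mathfrak{e}}$ in \eqref{b-(k+1) -- even highest term} once one recalls that in dimension two $\d_n g^{11}$ is, up to the known metric on the boundary, just a multiple of $\d_n H$ (the traceless second fundamental form vanishing identically for a one-dimensional boundary).

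The main obstacle I anticipate is bookkeeping rather than conceptual: one must verify that no term arises which is ``real scalar times $\gamma^n\gamma^1$'' in an odd slot or ``imaginary scalar times $\hat\x$'' in an even slot, i.e. that the parity of the power of $\hat\x$ and the presence or absence of the $i$ are rigidly linked throughout the recursion. This requires being careful with the terms $-i\d_\x b_{-k}\,\d_{x'}b_1$ and $-i\d_\x b_1\,\d_{x'}b_{-k}$: since $b_1 = -|\x|\id$, we have $\d_{x'}b_1 = -\d_{x'}|\x|\cdot\id$ which contributes a factor $g$-derivative (hence a $\cD^{\bR}$ coefficient, and a $\d_{x'}$ acting on $|\x|$ produces another $\hat\x$), while $\d_\x b_1 = -\hat\x\,\id$ supplies exactly one power of $\hat\x$; so these cross terms respect the claimed structure, but only because the explicit $-i$ cancels against nothing and stays attached to an odd number of $\hat\x$'s. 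Once these parity/reality constraints are confirmed to propagate, the induction closes and the lemma follows; I would present it as a short induction with the componentwise verification, invoking Proposition \ref{prop: form of b_-(k+1) to highest order} for the routine bounds on which terms lie in $\cD_k$.
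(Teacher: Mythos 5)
Your proposal is correct and follows essentially the same route as the paper, which proves the lemma by induction via the recursion \eqref{b -(k+1) equation} while tracking the rigid link between parity in $\xi$ and the presence of the factor $i$, using the $n=2$-specific facts that $(b_0^{\pm\pm})^{\mathfrak{e}}=0$ and that $b_0^{\pm\pm}$ is a scalar multiple of the identity since $\kappa_A^{\pm\pm}=0$. One small imprecision worth fixing: $\d_{x'}|\xi|=\tfrac{\d_{x'}g^{11}}{2g^{11}}|\xi|$ is $|\xi|$ times a real function of $x$ and carries no odd power of $\hat\xi$ (it is even in $\xi$ without an $i$), so the cross-term $-i\,\d_\xi b_{-k}\,\d_{x'}b_1$ respects the parity--$i$ constraint because $\d_\xi$ flips parity while the explicit $-i$ toggles the $i$; your conclusion is right, just not for the stated micro-reason.
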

Armed with this analogue of Propositions \ref{prop: form of b_-(k+1) to highest order} and \ref{prop: precise form of b -(k+1)}, the computation of the symbol of $\Theta_{g,m,\sU}$ from \eqref{local full symbol relation} proceeds as in Section \ref{subsec: symbol of Theta}. We determine the action of $\gamma^1$ on $\bV^+$ from $\theta_0$. Thus, we can choose an eigenvector $v^+_1$ of $\gamma^1$ and let $\bV^+_1$ be the span of $v^+_1$. Computing $\theta_{-1}$, we still have \eqref{- gamma^n theta_ -1}, but with $\Omega$, $A$ and $\kappa_A^{--}$ equal to $0$. In particular, since all the quantities there are real, under the canonical identification of $\End{\bV_1^+}$ with $\bC$, we have
\begin{equation}
    \Re\left( \gamma^n \theta^{\mathfrak{e}}_{-1} \big|_{\bV^+_1} \right) = -\frac{m}{|\x|}. \label{real part of theta_-1 is m}
\end{equation}
Thus, since $m \neq 0$, we can determine $g|_{\sU}$ and $\cD_0$. Note also that the odd part of $\theta_{-1}$ vanishes when $n=2$ (which is consistent with the vanishing of the traceless second fundamental form). 

Now, we compute $\theta^{\mathfrak{e}}_{-2}$ using \eqref{theta_-2 eqn}. Since $\left( b_0^{--} \right)^{\mathfrak{e}} = 0$, Lemma \ref{lemma: n=2 form of b} gives us
\begin{align}
    \gamma^n \theta^{\mathfrak{e}}_{-2} &= \frac{\gamma^n}{|\x|}  \left( \left( b^{-+}_{-1} \right)^{\mathfrak{e}} + \left( b_{-1}^{--} \right)^{\mathfrak{o}} \theta_0 - \frac{1}{2} H \theta_{-1}^{\mathfrak{e}} \right) + \cD_0, \nonumber \\
    &= \frac{\hat{\x}^2}{|\x|^2} \cD^{\bR}_1 \cdot \gamma^1 - \frac{1}{2|\x|} H \gamma^n \theta_{-1}^{\mathfrak{e}} + \cD_0,
\end{align}
So that we have, upon restricting to $\bV^+_1$, taking real parts, and using \eqref{real part of theta_-1 is m}, we get
\begin{equation}
    \Re\left( \gamma^n \theta_{-2}^{\mathfrak{e}} \big|_{\bV^+_1} \right) =  \frac{m}{2|\x|^2} H + \cD_0.
\end{equation}
We thus determine $H|_{\sU}$, which determines $\d_n g |_{\sU}$, since the boundary is $1$-dimensional.

Moving forward with the inductive step as in Section \ref{subsec: symbol of Theta}, let us assume that we know $\cD_k$. Then we find from equation \eqref{theta_ -(k+2) even part} that
\begin{align}
    \theta^{\mathfrak{e}}_{-(k+2)} &= \frac{1}{|\x|} \bigg( \left( b^{-+}_{-(k+1)} \right)^{\mathfrak{e}} + \left( b^{--}_{-(k+1)} \right)^{\mathfrak{o}} \theta_0 + \left( b_{-k}^{--} \right)^{\mathfrak{e}} \theta_{-1}^{\mathfrak{e}} -i \d_{\x} \left( b_{-k}^{--} \right)^{\mathfrak{e}} \d_{x} \theta_0 \bigg) + \cD_k,
\end{align}
which, by Lemma \ref{lemma: n=2 form of b}, yields
\begin{equation}
    \gamma^n  \theta^{\mathfrak{e}}_{-(k+2)} = \frac{\hat{\x}^2}{|\x|^{k+2}} \cD_{k+1}^{\bR} \gamma^1  - \frac{1}{2^{k+1}|\x|^{k+1}} \big(\d_n^{k} H \big) \gamma^n \theta_{-1}^{\mathfrak{e}} + \cD_k.
\end{equation}
Thus, by restricting to $\bV^+_1$ and taking real parts, we get
\begin{equation}
    \Re\left( \gamma^n \theta_{-(k+2)}^{\mathfrak{e}} \Big|_{\bV^+_1} \right) =\frac{m}{2^{k+1} |\x|^{k+2}} \big( \d_n^{k} H \big) + \cD_k,
\end{equation}
and so we can determine $\cD_{k+1}$. This completes the proof of Theorem \ref{main theorem: boundary determination, intro} for $n = 2$. \hfill \qed

\section{Recovering analytic data from $\Theta_{g,A,m, \sU}$}\label{sec: recovery}

In this section, we shall prove Theorem \ref{main theorem: reconstruction, intro} by adapting the methods of \cite{LTU2003, KLU2011, Gabdurakhmanov2025, EV2024}, which use the Green's kernels, or a suitable substitute in the case of \cite{EV2024}, to embed the two manifolds and their corresponding vector bundles into a common Sobolev space. To this end, in Section \ref{subsec: Greens kernels}, we first introduce the Green's kernel for the Dirac operator with chiral boundary conditions and review some its fundamental properties. Then, in Section \ref{subsec: reconstruction}, we shall use these Green's kernels to embed our twisted spinor bundles into a suitable Sobolev space, and then construct the required isomorphisms and isometries from this embedding.


\subsection{Green's kernels for the Dirac operator}\label{subsec: Greens kernels}

In order to prove Theorem \ref{main theorem: reconstruction, intro}, we first introduce the Green's kernel $G(x,y)$ for the Dirac operator $D_A - m$ over a spin manifold with boundary with chiral boundary conditions. Green's kernels for Dirac operators on manifolds with boundary have been studied before in \cite{Raulot2011, Calderbank1996thesis}, and some of the results we recall below are straightforward generalization of those found therein.

Let $M$ be a spin manifold with boundary, and let $\cE$ be a twisted spinor bundle over $M$. Let $\cE \boxtimes \cE^*$ denote the vector bundle over $M \times M$ whose fiber over $(x,y)$ is $\cE_x \otimes \cE^*_y = \Hom(\cE_y, \cE_x)$. Then, following \cite{Raulot2011}, for $m \notin \spec^+(D_A)$, we define the Green's kernel $G(x,y)$ for $D_A - m$ with chiral boundary conditions to be a distributional section of $\cE \boxtimes \cE^*$ satisfying
\begin{equation}\label{Greens kernel definition 1}
    \begin{cases}
        (D_A - m) G(x,y) = \delta_y \id_{\cE_y}, \\
        \bB^+ G(x,y) = 0.
    \end{cases}
\end{equation}
where the operators $D_A - m$ and $\bB^+$ act on the first factor $\cE_x$. Note that we have a Green's identity for the operator $D_A - m$ (see \cite[\S II.5]{LawsonMichelsohn} for example),
\begin{equation}\label{Green's identity for D_A}
    \int_M \< \left( D_A - m \right) \psi_1, \psi_2 \> - \<  \psi_1, \left( D_A - m \right) \psi_2 \> = -\int_{\d M} \< \gamma^n \psi_1, \psi_2 \>,
\end{equation}
for all smooth sections $\psi_1, \psi_2$ of $\cE$. Therefore, \eqref{Greens kernel definition 1} is equivalent to requiring
\begin{equation}\label{Greens kernel definition 2}
    \int_M \< G(x,y) \psi_1^{+}(y), \left( D_A - m \right) \psi_2^{+}(x) \> \, dx = \< \psi_1^+(y), \psi^+_2(y) \>
\end{equation}
for all smooth sections $\psi^{+}_1, \psi^{+}_2$ of $\cE$ such that $\bB^+ \psi^+_i = 0$ (c.f. \cite{Raulot2011}).

One can construct the Green's kernel $G(x,y)$ in several ways. One such way, assuming that $m^2$ is not in the Dirichlet spectrum of $D_A^2$, is to start with the Dirichlet Green's kernel $\hat{G}(x,y)$ for the second-order operator $D_A^2 - m^2$, which by the Lichnerowicz--Weitzenb\"ock formula \eqref{Lichnerowicz--Weitzenbock identity} is simply a Schr\"odinger operator for the connection Laplacian. The Green's kernel $\hat{G}(x,y)$ can be constructed via the parametrix method in \cite[\S 4.2]{Aubin1998}, for example, or one can appeal to the existence of a local fundamental solution \cite[\S XVII]{HormanderIII} and then globalize the solution by solving the Dirichlet problem with appropriate boundary data; see, for instance, \cite{Gabdurakhmanov2025} or \cite{Gabdurakhmanov2023thesis} for a more detailed exposition. Either way, once one has $\hat{G}(x,y)$, one obtains the chiral Green's kernel $G(x,y)$ by
\begin{equation}\label{Greens kernel construction 1}
    G(x,y) = (D_A + m) \hat{G}(x,y) - F(x,y),
\end{equation}
where $F(x,y)$ is the unique smooth solution to the chiral boundary value problem \eqref{chiral BVP} with
\begin{equation}
    \bB^+ F(x,y) = \bB^+ \big( \left( D_A + m \right) \hat{G}(x,y) \big).
\end{equation}
One then has that $G(x,y)$ defined by \eqref{Greens kernel construction 1} satisfies \eqref{Greens kernel definition 1}. 

Since $A$ is unitary and $m$ is real, the operator $D_A - m$ with chiral boundary condition $\bB^+$ is self-adjoint \cite{Raulot2011}. As a consequence, we have the following
\begin{proposition}\label{prop: Greens kernel is symmetric}
    The Green's kernel $G(x,y)$ is symmetric in the sense that we have
    \begin{equation}\label{Greens kernel is symmetric}
        G(x,y)^* = G(y,x)
    \end{equation}
    as elements of $\Hom(\cE_x, \cE_y)$ for all $x \neq y$.
\end{proposition}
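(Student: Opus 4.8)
The plan is to read off the symmetry of $G$ from the self-adjointness of the chiral boundary value problem, recalled immediately above the statement, by identifying $G$ with the Schwartz kernel of the inverse operator. Write $T$ for $D_A - m$ acting on the domain $\{u \in H^1(\cE) : \bB^{+} u = 0\}$. By \cite{Raulot2011} this operator is self-adjoint, and since $m \notin \spec^{+}{D_A}$ it is invertible, so $R := T^{-1} : L^2(\cE) \to L^2(\cE)$ is bounded and self-adjoint.

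First I would check that $G$ is the Schwartz kernel of $R$: given $\phi \in \Gamma(\cE)$, the section $u(x) := \int_M G(x,y)\,\phi(y)\, dy$ satisfies $(D_A - m)u = \phi$ and $\bB^{+} u = 0$ by \eqref{Greens kernel definition 1} (the operators acting on the first variable), hence $u = R\phi$. Since $G$ is smooth off the diagonal and has there an integrable singularity of order $\dist(x,y)^{1-n}$, it lies in $L^1_{\loc}(M \times M, \cE \boxtimes \cE^*)$, so all integrals below converge absolutely and Fubini applies.

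The main computation is then short. For $\phi, \psi \in \Gamma(\cE)$, self-adjointness of $R$, unpacked via Fubini, reads
\begin{equation*}
    \int_M \int_M \< G(x,y)\phi(y), \psi(x) \> \,dy\,dx = \< R\phi, \psi \>_{L^2} = \< \phi, R\psi \>_{L^2} = \int_M \int_M \< \phi(y), G(y,x)\psi(x) \> \,dx\,dy .
\end{equation*}
Rewriting the right-hand integrand as $\< G(y,x)^* \phi(y), \psi(x) \>_{\cE_x}$ and letting $\phi,\psi$ be arbitrary, one concludes $G(x,y) = G(y,x)^*$ as $L^1_{\loc}$ sections of $\cE \boxtimes \cE^*$, hence pointwise for every $x \neq y$ by smoothness off the diagonal; taking adjoints gives \eqref{Greens kernel is symmetric}.

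An equivalent, more hands-on route avoids the abstract inverse: fix $x \neq y$, $\eta \in \cE_x$, $\zeta \in \cE_y$, apply the Green's identity \eqref{Green's identity for D_A} to $\psi_1 = G(\cdot,y)\zeta$ and $\psi_2 = G(\cdot,x)\eta$ on $M$ with small geodesic balls around $x$ and $y$ removed, and let the radii shrink. The interior terms vanish since $D_A - m$ annihilates both sections off the punctures; the integral over $\d M$ vanishes because $\bB^{+} G(\cdot,\cdot) = 0$ forces the restrictions to $\d M$ into $\Gamma(\bV^{-}_E)$, on which $\gamma^n$ maps isomorphically onto $\bV^{+}_E$ (Remark \ref{remark: projectors and clifford relations}), so $\< \gamma^n \psi_1, \psi_2 \> \equiv 0$ there; and the two spherical integrals converge, via the local form of the fundamental solution of $D_A$, to $\pm \< G(x,y)\zeta, \eta \>$ and $\mp \< \zeta, G(y,x)\eta \>$. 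Comparing yields the claim. In both approaches the only nonroutine ingredient is bookkeeping — the kernel identification together with Fubini in the first, the local analysis of the spherical boundary terms in the second — while the substantive geometric input is that $\gamma^n$ interchanges $\bV^{\pm}_E$, i.e.\ the self-adjointness built into the chiral condition.
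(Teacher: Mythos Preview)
Your proposal is correct. The paper does not give its own proof but cites \cite[Prop.~4]{Raulot2011}, whose argument is essentially your second route: apply Green's identity \eqref{Green's identity for D_A} to $G(\cdot,y)\zeta$ and $G(\cdot,x)\eta$ on $M$ with small balls around $x$ and $y$ excised, observe that the $\d M$ contribution vanishes because both restrictions lie in $\bV_E^{-}$ while $\gamma^n$ sends $\bV_E^{-}$ to $\bV_E^{+}$, and recover the pointwise identity from the limiting spherical terms via the leading asymptotics of the fundamental solution. Your first route, identifying $G$ with the Schwartz kernel of the self-adjoint inverse $R=T^{-1}$ and reading off the symmetry from $\langle R\phi,\psi\rangle=\langle\phi,R\psi\rangle$ via Fubini, is a clean and entirely legitimate alternative; it trades the local analysis of the spherical boundary terms for a one-line appeal to operator self-adjointness, at the mild cost of checking that $G$ really is the kernel of $R$ and that the $L^1_{\loc}$ integrability justifies Fubini (both of which you note).
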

The proof of Proposition \ref{prop: Greens kernel is symmetric} is identical to that of \cite[Prop. 4]{Raulot2011}. Moreover, by classical results of elliptic regularity \cite{MN1957, Petrowsky1939}, we have that $G(x,y)$ is analytic in $x$ for $x \neq y$, since the metric and connection are analytic. Proposition \ref{prop: Greens kernel is symmetric} implies that $G(x,y)$ is also analytic in $y$ when $y \neq x$. Moreover, recall that $\delta_y \in H^s$ for all $s < -\frac{n}{2}$. It follows from \eqref{Greens kernel definition 1} and standard elliptic theory that $G(x,y)$ is in $H^{s+1}$ for all such $s$. In fact we can say more: since $\d_y \delta_y$ is in $H^{s-1}$ for all such $s$, we have that $\d_y G(x,y)$ is in $H^{s}$. One can easily show that this map is continuous by the same arguments used in \cite{KLU2011} or \cite{Gabdurakhmanov2025}, for example. Summarizing, we have

\begin{proposition}\label{prop: Greens kernel C1 into Sobolev space}
    The map $y \mapsto G(\cdot, y)$ is a $C^1$-map into $H^s(M, \cE \boxtimes \cE^*)$ for $s < -\frac{n}{2}$.
\end{proposition}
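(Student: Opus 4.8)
The plan is to leverage the two construction facts already established: first, that the chiral Green's kernel $G(x,y)$ satisfies \eqref{Greens kernel definition 1}, and second, that $D_A - m$ is elliptic with the chiral boundary condition $\bB^+$ forming an elliptic boundary value problem (by the material of Section \ref{subsec: boundary conditions}). The $C^1$-regularity statement should then follow by differentiating the defining equation \eqref{Greens kernel definition 1} in the base point $y$ and invoking elliptic estimates uniform in $y$, together with the symmetry of Proposition \ref{prop: Greens kernel is symmetric} to transfer $x$-regularity to $y$-regularity.

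First I would fix $s < -\tfrac{n}{2}$ and recall that $\delta_y \in H^s(M)$ with $\|\delta_y\|_{H^s}$ bounded locally uniformly in $y$, and more importantly that $y \mapsto \delta_y$ is itself a $C^1$ map (in fact $C^\infty$) into $H^{s}(M)$ for such $s$ — indeed $\d_{y_j}\delta_y$ equals $-(\d_{x_j}\delta)(\cdot - y)$ in a chart, which lies in $H^{s-1} \subset H^{s}$, and difference quotients converge in this norm. Next, since $m \notin \spec^+(D_A)$, the operator $D_A - m$ with boundary condition $\bB^+$ is an isomorphism $H^{t+1}(\cE)_{\bB^+} \to H^{t}(\cE)$ for every $t$ (where the subscript denotes the subspace cut out by $\bB^+ = 0$ on the boundary, understood in the trace sense); this is the standard consequence of ellipticity of the boundary value problem plus invertibility of the Fredholm operator at the non-spectral value $m$. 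Applying the inverse to \eqref{Greens kernel definition 1}, which reads $(D_A - m)G(\cdot,y) = \delta_y\,\id_{\cE_y}$ with $\bB^+ G(\cdot,y) = 0$, gives that $G(\cdot,y) \in H^{s+1}(M, \cE \otimes \cE_y^*)$, with the map $y \mapsto G(\cdot,y)$ the composition of the $C^1$ map $y \mapsto \delta_y\,\id_{\cE_y}$ into $H^s$ with the bounded linear solution operator, hence $C^1$ into $H^{s+1}$; in particular $C^1$ into $H^s$. One must be mildly careful that the target bundle $\cE \boxtimes \cE^*$ has a $y$-dependent fiber on the second factor, but trivializing $\cE^*$ locally near a fixed $y_0$ (using parallel transport, say) reduces this to the statement about a fixed bundle $\cE$ valued in a finite-dimensional space, with the transition being smooth in $y$.

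The step I expect to be the main obstacle — or at least the one requiring the most care — is making the dependence on $y$ genuinely $C^1$ rather than merely continuous, because one is differentiating through a solution operator for a boundary value problem and must check that the difference quotients of $y \mapsto \delta_y\,\id_{\cE_y}$ converge \emph{in the $H^s$-norm}, not just weakly or pointwise. This is where the choice $s < -\tfrac{n}{2}$ is used: it is exactly the regularity at which $\delta_y$ and its first base-point derivatives lie in $H^s$ with the difference quotients Cauchy, by an elementary Fourier-side estimate $\|(\delta_{y+h} - \delta_y)/|h| - \d_\nu \delta_y\|_{H^s} \to 0$, which follows from dominated convergence against the weight $\langle\xi\rangle^{2s}$. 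Once this convergence is in hand, boundedness of the solution operator $H^s \to H^{s+1}$ does the rest, and the argument for higher $y$-derivatives (should one want them) is identical with $s$ lowered further — the remark that $\d_y G(x,y) \in H^s$ is precisely this. I would write out the Fourier estimate in a line or two, cite ellipticity of the chiral boundary value problem from Section \ref{subsec: boundary conditions} for the mapping property of $(D_A - m, \bB^+)^{-1}$, and note that analyticity of $G$ in $x$ away from the diagonal (already observed via \cite{MN1957, Petrowsky1939}) combined with Proposition \ref{prop: Greens kernel is symmetric} gives the corresponding analyticity and smoothness in $y$ away from the diagonal, which complements but is logically independent of the Sobolev statement being proved here.
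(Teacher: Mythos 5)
Your overall strategy — viewing $G(\cdot,y)$ as the image of $\delta_y\,\id_{\cE_y}$ under the bounded solution operator for the elliptic boundary value problem $(D_A - m, \bB^+)$, and transferring differentiability in $y$ from $\delta_y$ to $G(\cdot,y)$ — is exactly what the paper has in mind, and indeed is what the sketch just before the proposition indicates. However, there is a genuine index error in your bookkeeping that causes you both to misplace a Sobolev inclusion and to overclaim the conclusion by one degree.

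Concretely: you write that $\d_{y_j}\delta_y \in H^{s-1} \subset H^{s}$, but the inclusion runs the other way — $s-1 < s$ means $H^{s} \subset H^{s-1}$, not the reverse. Consequently $y \mapsto \delta_y$ is \emph{not} $C^1$ into $H^{s}$ for $s < -\tfrac{n}{2}$; the difference quotients $(\delta_{y+h} - \delta_y)/|h|$ do converge, but only in the $H^{s-1}$-norm, since $\d_{y}\delta_y$ sits in $H^{t}$ only for $t < -\tfrac{n}{2} - 1$. The Fourier-side estimate you propose (dominated convergence against $\langle\xi\rangle^{2s}$) would reveal this: the integrand behaves like $|\xi|^2\langle\xi\rangle^{2s}$, which is integrable precisely when $s < -\tfrac{n}{2} - 1$, i.e.\ when $s-1$ replaces $s$. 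The correct chain is therefore: $y \mapsto \delta_y\,\id_{\cE_y}$ is $C^1$ into $H^{s-1}$ when $s < -\tfrac{n}{2}$; the solution operator maps $H^{s-1} \to H^{s}$ boundedly; hence $y \mapsto G(\cdot,y)$ is $C^1$ into $H^{s}$, with values lying one degree higher in $H^{s+1}$ and $y$-derivatives in $H^{s}$. Your claim of "$C^1$ into $H^{s+1}$" is too strong by exactly this one degree, and the paper's own remark preceding the proposition — "$G(x,y) \in H^{s+1}$" but "$\d_y G(x,y) \in H^{s}$" — is the correct accounting. Once the indices are fixed, the rest of your argument (trivialization of $\cE^*$ near a fixed $y_0$, the use of symmetry via Proposition \ref{prop: Greens kernel is symmetric}, continuity of the derivative) is fine.
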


By Proposition \ref{prop: Greens kernel is symmetric}, the same is true for the map $x \mapsto G(x, \cdot)$. Finally, we want to investigate the asymptotics of the Green's kernel as $x \to y$ for an interior point $y$. For this, note that it suffices to consider a local fundamental solution for the operator $D_A - m$ in a local trivialization near $y$, since by elliptic regularity the difference between such a fundamental solution and the Green's kernel must be smooth. A study of fundamental solutions for Dirac-type systems with $0$-th order terms has been carried out in \cite{DinhSon2012}, an implication of which is that there is $\lambda > 0$ such that
\begin{equation}\label{asymptotics of Greens kernel}
    G(x,y) =  \frac{c_n \gamma(e_r)}{r(x,y)^{n-1}} + O\left( r(x,y)^{1-n + \lambda} \right),
\end{equation}
\begin{equation}\label{asymptotics of Greens kernel derivatives}
    \nabla_x G(x,y) =  \frac{c'_n \gamma(e_r)}{r(x,y)^{n}} + O\left( r(x,y)^{-n + \lambda} \right),
\end{equation}
as $x \to y$, where $r(x,y)$ is the geodesic distance, $e_r$ is the radial unit vector in normal coordinates about $y$, and $c_n, c_n'$ are constants. Indeed, as shown in \cite{DinhSon2012}, the leading order term of $G(x,y)$ can be found by applying $D_A$ to the leading order term of the fundamental solution for $D_A^2$. Note that by the symmetry of the Green's kernel, $\nabla_y G(x,y)$ also has the asymptotics \eqref{asymptotics of Greens kernel derivatives}.




\subsection{Proof of Theorem \ref{main theorem: reconstruction, intro}}\label{subsec: reconstruction}

Let us henceforward assume the hypotheses of Theorem \ref{main theorem: reconstruction, intro}. We thus have an identification of the boundaries of $M_1$ and $M_2$ over the open set $\sU$, and a unitary isomorphism $\chi : E_1|_{\sU} \to E_2|_{\sU}$ of the auxiliary bundles. Moreover, the boundary conjugation maps $\Theta_{g_1, A_1,m,\sU}$ and $\Theta_{g_2, A_2, m, \sU}$ are equivalent over $\sU$ in the sense of Definition \ref{definition of equivalent Theta maps}. The first thing we do is use the assumption of real-analyticity and Theorem \ref{main theorem: boundary determination, intro} to extend the data on $M_i$ to a larger real-analytic manifold $\tilde{M}_i$.

To this end, we define the manifold $\tilde{M}_i$ by first choosing boundary normal coordinates about points in $\sU$, and then attaching a small ball of radius $\varepsilon$, so that points in the lower half-space are identified with points in $M_i$. Let $U := \tilde{M}_i \setm M_i$. Note that we can choose $\varepsilon$ sufficiently small so that $U$ does not depend on $M_i$. That is, we can assume that $\tilde{M}_1 \setm M_1$ is identified with $\tilde{M}_2 \setm M_2$. It is easy to see that by shrinking $U$ if necessary, we can assume that the real-analytic metrics $g_i$ extend to metrics $\tilde{g}_i$ on $\tilde{M}_i$. Moreover, the bundles $E_i$ also extend to Hermitian bundles $\tilde{E}_i$, which are trivial over $U$. The connections $A_i$ also extend to $U$ in an analytic frame for $\tilde{E}_i$. Note that the spinor bundle $\bS_i$ extends to the corresponding spinor bundle $\tilde{\bS}_i$ of $\tilde{g}_i$, which can also be assumed to be trivial on $U$. The chirality operator $\Pi$ of course extends as well. We also note that since finite systems of eigenvalues are continuous (see \cite[\S 17]{BW1993} or \cite[\S IV 3.5]{Kato1980}, for example), we can choose $\tilde{M}_i$ so that $m$ does not belong to the chiral spectrum of $\tilde{D}_A$ over $\tilde{M}_i$.

Now, by Theorem \ref{main theorem: boundary determination, intro}, the metrics $g_1$ and $g_2$ have the same Taylor series over $\sU$. Therefore, the extended metrics $\tilde{g}_1$ and $\tilde{g}_2$ agree in $U$. It follows that we can identify the spinor bundles $\bS_1$ and $\bS_2$ over $U$ by an isomorphism $j_S$ induced by an isometry $j : (U, \tilde{g}_1) \to (U, \tilde{g}_2)$. Also, the isomorphism $\chi : E_1|_{\sU} \to E_2|_{\sU}$ extends to a unitary isomorphism $j_E : \tilde{E}_1|_{U} \to \tilde{E}_2|_{U}$ defined by identifying $E_1$ and $E_2$ in boundary normal frames. By Theorem \ref{main theorem: boundary determination, intro}, the connection $1$-forms $A_1$ and $A_2$ have equal Taylor series with respect to these frames, and thus by analytic continuation, they are equal in $U$. In other words, we have that $j_E^* A_2 = A_1$ over $U$. In the following, we shall let $\cE_i := \bS_i \otimes E_i$ and similarly for $\tilde{\cE}_i$. Since $\tilde{\cE}_1|_{U}$ and $\tilde{\cE}_2|_{U}$ are identified, we shall sometimes denote both as $\tilde{\cE}|_{U}$.


Now, having extended our data to $U$, we have the Green's kernel $\tilde{G}_i(x,y)$ for $\tilde{D}_A - m$ with chiral boundary conditions over $\tilde{M}_i$, as defined in Section \ref{subsec: Greens kernels}. The first step is to show that the boundary conjugation maps determine the Green's kernels in $U$. Thus we prove the following

\begin{proposition}\label{prop: Greens functions agree in U}
    We have $\tilde{G}_1(x,y) = \tilde{G}_2(x,y)$ in $\Hom(\cE_y, \cE_x)$ for all $x,y \in U$ with $x \neq y$.
\end{proposition}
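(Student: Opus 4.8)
The plan is to show that the two Green's kernels, when restricted to $U \times U$, satisfy the same boundary value problem on a common domain, and then invoke uniqueness. The key point is that the boundary conjugation map $\Theta_{g_i,A_i,m,\sU}$ encodes exactly the Cauchy data of solutions of $(D_A-m)\psi = 0$ along $\sU$, and this Cauchy data must be consistent for the two kernels after the identifications established above.

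First I would fix $y \in U$ and consider the section $x \mapsto \tilde{G}_i(x,y)$ over $\tilde{M}_i$. Since $y$ is an interior point of $U \subset \tilde{M}_i$, and $U$ is the common collar region where $\tilde{g}_1 = \tilde{g}_2$, $\tilde{E}_1 = \tilde{E}_2$, $\tilde{A}_1 = \tilde{A}_2$ under the identifications $j$, $j_S$, $j_E$, both kernels satisfy $(\tilde{D}_A - m)\tilde{G}_i(\cdot,y) = \delta_y\,\id$ on $U$ with the \emph{same} operator. Hence their difference $H_i(x) := \tilde{G}_1(x,y) - \tilde{G}_2(x,y)$ (viewing both in $\Hom(\tilde\cE_y, \tilde\cE|_U)$) satisfies $(\tilde{D}_A - m) H = 0$ in $U$, and in particular is real-analytic in $U$ by elliptic regularity. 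So it suffices to show that $H$ and enough of its derivatives vanish on $\d M \cap \sU$ — or rather, to show the Cauchy data of $\tilde G_1(\cdot,y)$ and $\tilde G_2(\cdot, y)$ on $\sU$ agree. The natural splitting is into the chiral components: on $\sU$, write $\tilde{G}_i(\cdot,y)|_{\sU} = \bB^+ \tilde{G}_i(\cdot,y)|_{\sU} \oplus \bB^- \tilde{G}_i(\cdot,y)|_{\sU}$. By the defining property \eqref{Greens kernel definition 1}, $\bB^+\tilde{G}_i(x,y) = 0$ for $x$ on the boundary of $\tilde M_i$; but the boundary of $\tilde M_i$ is \emph{not} $\sU$ — rather, $\sU$ sits in the interior of $\tilde M_i$. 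So instead I would work with the original manifolds $M_i$, whose boundary contains $\sU$, and use the Green's kernels $G_i(x,y)$ of $D_A - m$ over $M_i$ together with the relation between $\tilde G_i$ and $G_i$.

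The cleaner route: for $y \in U$, the restriction $G_i^{\sharp}(x) := \tilde{G}_i(x,y)\big|_{x \in M_i}$ is a section over $M_i$ satisfying $(D_A - m) G_i^{\sharp} = 0$ in the interior of $M_i$ (since $y \notin M_i$), with boundary data $\bB^+ G_i^{\sharp}|_{\sU}$ and $\bB^- G_i^{\sharp}|_{\sU}$ along $\sU$, and $\bB^+\tilde G_i(x,y)=0$ for $x$ on the \emph{outer} boundary $\d\tilde M_i$. Then, by the definition of the boundary conjugation map applied to the solution $G_i^\sharp$ of the chiral boundary value problem on $M_i$ (with inhomogeneous chiral data coming from $\sU$), one gets $\bB^- G_i^{\sharp}|_{\sU} = \Theta_{g_i,A_i,m,\sU}\big(\bB^+ G_i^{\sharp}|_{\sU}\big)$, possibly modulo a correction supported away from $\sU$ on $\d M_i$; here one must be careful that the chiral data on the rest of $\d M_i$ vanishes or is handled by the global $\bB^+$ condition, which is exactly why we arranged $y$ to lie in the collar. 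Similarly, on the small piece $U$, the behaviour of $\tilde G_i(\cdot,y)$ near $y$ (the singularity $c_n\gamma(e_r)/r^{n-1}$ from \eqref{asymptotics of Greens kernel}) is universal and identical for $i=1,2$ since the data agree on $U$. Now apply the hypothesis: $\Theta_{g_1,A_1,m,\sU}$ and $\Theta_{g_2,A_2,m,\sU}$ are equivalent via $(\hat\vp \otimes \chi)$, and on $U$ these identifications are exactly $j_S \otimes j_E$; so the two boundary-data-to-boundary-data maps coincide under our identifications. Combining, the Cauchy data $\big(\bB^+ G_i^\sharp|_{\sU},\, \bB^- G_i^\sharp|_{\sU}\big)$ for $i=1,2$ must agree — one needs a short argument that the map $y \mapsto \bB^+ \tilde G_i(\cdot,y)|_{\sU}$ together with the singularity structure determines $\tilde G_i$ uniquely, which follows from well-posedness of \eqref{chiral BVP} and of the Dirichlet problem \eqref{second order BVP} with a point source, using that $m \notin \spec^+ \tilde D_A$ and $m^2$ not in the Dirichlet spectrum.

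Making this precise, I would argue as follows: consider $W(x) := \tilde G_1(x,y) - \tilde G_2(x,y)$ on the doubled region $U$; it is a section of $\tilde\cE|_U$ (the common bundle), it is smooth (the singularities at $x=y$ cancel exactly by \eqref{asymptotics of Greens kernel}, \eqref{asymptotics of Greens kernel derivatives}, since the leading terms depend only on the metric and are identical), and it satisfies $(D_A - m) W = 0$ in $U$. On $\sU$, its chiral-plus component $\bB^+ W|_{\sU}$ and chiral-minus component $\bB^- W|_{\sU}$ are related by $\bB^- W|_{\sU} = \Theta_{g_1,A_1,m,\sU}(\bB^+ W|_{\sU})$ — this uses that both $\tilde G_i$ restrict on $M_i$ to solutions of the chiral BVP, that the $\Theta$ maps are equivalent, and that $y \in U$ so there is no interior source in $M_i$ — more precisely, extend $\bB^+W|_\sU$ by zero and apply the local maps. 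But $W$ is also defined on all of $U$, a set \emph{disjoint} from $M_i$; so actually $W$ is determined on $U$ by solving the chiral BVP over $M_i \cup U = \tilde M_i$ with zero source, zero outer boundary data, and prescribed jump — and the only such solution regular at $y$ with consistent Cauchy data on $\sU$ is $W \equiv 0$ by uniqueness for \eqref{chiral BVP} (here one invokes $m\notin\spec^+\tilde D_A$). The main obstacle, and the step requiring the most care, is precisely this bookkeeping of how $\Theta$ transfers the relevant Cauchy data between the two sides of $\sU$ while correctly accounting for the singularity at $y$ and for the contributions of $\d M_i \setminus \sU$; the equivalence of the $\Theta$ maps and the analyticity (for unique continuation from $\sU$ into $U$, via Proposition~\ref{prop: Greens kernel C1 into Sobolev space} and elliptic regularity) are the two inputs that close the argument. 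Once $W \equiv 0$ is established for each fixed $y \in U$, letting $y$ range over $U$ and using symmetry of the kernels (Proposition~\ref{prop: Greens kernel is symmetric}) gives $\tilde G_1(x,y) = \tilde G_2(x,y)$ for all $x,y \in U$, $x \neq y$.
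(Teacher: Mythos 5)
Your proposal contains the right ingredients (comparison of Cauchy data on $\sU$ via the $\Theta$ equivalence, elliptic regularity to cancel the singularity at $y$, and uniqueness of the chiral boundary value problem) and is essentially the ``difference'' version of the paper's argument, but it leaves the crucial final step unjustified and contains some confused phrases along the way.

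You correctly observe that $W := \tilde G_1(\cdot,y) - \tilde G_2(\cdot,y)$ is smooth on $U$, satisfies $(D_A - m)W = 0$, has $\bB^+ W = 0$ on $\d\tilde M_i \cap \bar U$, and that the equivalence of the $\Theta$ maps gives $\bB^- W|_\sU = \Theta_{g_1,A_1,m,\sU}(\bB^+ W|_\sU)$. But these facts alone do not imply $W\equiv 0$: a section on $U$ whose Cauchy data on $\sU$ are compatible with $\Theta_1$ could a priori be nonzero. To close the gap you must \emph{extend} $W$ into $M_1$: let $V$ be the unique solution on $M_1$ of the chiral BVP with $\bB^+ V = \bB^+ W|_\sU$ on $\sU$ and $\bB^+ V = 0$ on $\d M_1\setminus\sU$; the $\Theta$ identity you derived says exactly that $V|_\sU = W|_\sU$, so the glued section $\tilde W$ (equal to $W$ on $U$, $V$ on $M_1$) is a distributional, hence smooth, solution of the homogeneous Dirac equation on all of $\tilde M_1$ with $\bB^+\tilde W = 0$ on $\d\tilde M_1$, and then uniqueness (from $m\notin\spec^+\tilde D_A$) gives $\tilde W\equiv 0$, hence $W\equiv 0$. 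The sentence in your proposal invoking ``zero source, zero outer boundary data, and prescribed jump'' and the claim that the Cauchy data ``must agree'' both gesture at this but neither constructs the extension nor correctly states the condition — there is no jump, and Cauchy-data agreement is a \emph{consequence} of $W=0$, not a step toward it. The paper avoids this awkwardness by not forming the difference at all: it directly solves the chiral BVP on $M_2$ with $\bB^+$ data taken from $\tilde G_1$, applies the $\Theta$ equivalence to see the solution agrees with $\tilde G_1$ on $\sU$, glues, and identifies the result with $\tilde G_2$ by uniqueness of the Green's kernel.

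Two smaller points. First, the singularity cancellation at $x=y$ follows simply from the fact that both kernels solve the same distributional equation $(D_A - m)G_i(\cdot,y) = \delta_y\,\id$ on $U$, so their difference solves the homogeneous equation and is smooth by elliptic regularity; the asymptotic expansions \eqref{asymptotics of Greens kernel}--\eqref{asymptotics of Greens kernel derivatives} control only the leading terms and do not by themselves show the full singular parts cancel. Second, the real-analyticity of the data and unique continuation are not needed anywhere in this proposition (they enter only in the later reconstruction argument), so the references to analyticity and continuation from $\sU$ into $U$ are red herrings here.
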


\begin{proof}
    For $y \in U$ and $\psi \in \tilde{\cE}_y$, we shall define a section $\tilde{\Gamma}_{y, \psi}$ of $\tilde{\cE}_2$ over $\tilde{M}_2$ that restricts to $\tilde{G}_1(x,y) \psi$ on $U$, and then show that it must coincide with $\tilde{G}_2(x,y) \psi$ on $\tilde{M}_2$. Thus, we define $\Gamma_{y, \psi}$ to be the unique solution to the following boundary value problem on $M_2$,
    \begin{equation}
        \begin{cases}
            (D_{g_2, A_2} - m) \Gamma_{y, \psi} = 0 \\ 
            \bB^+ \Gamma_{y, \psi}|_{\sU} = \bB^+ \tilde{G}_1(x,y)\psi, \\
            \bB^+ \Gamma_{y, \psi}|_{\d M_2 \setm \sU} = 0.
        \end{cases}
    \end{equation}
On the other hand, since $y \in U$, we have that $\tilde{G}_1(x,y) \psi$ satisfies
\begin{equation}
        \begin{cases}
            (D_{g_1, A_1} - m) \tilde{G}_1(x,y) \psi = 0 \\ 
            \bB^+ \tilde{G}_1(x,y) \psi |_{\sU} = \bB^+ \tilde{G}_1(x,y) \psi, \\
            \bB^+ \Gamma_{y, \psi}|_{\d M_1 \setm \sU} = 0,
        \end{cases}
    \end{equation}
on $M_1$ (note that $\d M_1 \setm \sU \subseteq \d \tilde{M}_1$). Therefore, since the boundary conjugation maps $\Theta_{g_1, A_1, m, \sU}$ and $\Theta_{g_2, A_2, m, \sU}$ are equivalent, upon making the above identifications of our bundles, we have
\begin{equation}
    \Theta_{g_2, A_2, m, \sU}\big(\, \bB^+ \Gamma_{y, \psi}\big) = \Theta_{g_1, A_1, m, \sU}\big(\, \bB^{+} \tilde{G}_1(x,y) \psi \big) = \bB^{-} \tilde{G}_1(x,y) \psi.
\end{equation}
Therefore, $\Gamma_{y, \psi}|_{\sU} = \tilde{G}_1(x,y)\psi$. So we can define the following section of $\tilde{\cE}_2$ over $\tilde{M}_2$,
\begin{equation}
    \tilde{\Gamma}_{y,\psi} = \begin{cases}
        \tilde{G}_1(x,y)\psi, & x \in U, \\
        \Gamma_{y,\psi}(x), & x \in M_2.
    \end{cases}
\end{equation}
Note that by construction, $\tilde{\Gamma}_{y,\psi}$ is a distributional solution of 
\begin{equation}
    \begin{cases}
        (D_{\tilde{g}_2, \tilde{A}_2} - m) \tilde{\Gamma}_{y, \psi} = \psi\, \delta_y, \\
        \bB^+  \tilde{\Gamma}_{y,\psi} |_{\d \tilde{M}_2} = 0.
    \end{cases}
\end{equation}
Therefore, by uniqueness, we must have $\tilde{\Gamma}_{y, \psi} = \tilde{G}_2(x,y) \psi$. Since this is true for all $\psi \in \tilde{\cE}_y$, we have $\tilde{G}_1(x,y) = \tilde{G}_2(x,y)$ for $x, y \in U$ with $x \neq y$. This completes the proof.
\end{proof}

Next, we define the following maps, which we shall show are embeddings of the twisted spinor bundles into a common Sobolev space. So, for $i = 1,2$ and $s < -\frac{n}{2}$, we define
\begin{align}\label{definition of embedding maps}
\begin{split}
    \sG_i : \tilde{\cE}_i &\to H^s(U, \tilde{\cE}|_U), \\
    \psi_y &\mapsto \tilde{G}_i(\cdot,y) \psi_y,
\end{split}
\end{align}
where $\psi_y$ is in the fiber of $\tilde{\cE}_i$ over $y \in \tilde{M}_i$. Note that by Proposition \ref{prop: Greens kernel C1 into Sobolev space}, the maps $\sG_i$ are $C^1$-smooth. Moreover, Proposition \ref{prop: Greens functions agree in U} implies that $\sG_1$ and $\sG_2$ agree when restricted to $U$. More precisely,
\begin{equation}\label{G_i maps intertwine}
    j_{\cE} \circ \sG_1|_U = \sG_2|_U \circ j_{\cE}, 
\end{equation}
where $j_{\cE} : \tilde{\cE}_1|_{U} \to \tilde{\cE}_2|_{U}$ is the isomorphism $j_S \otimes j_E$, described at the beginning of this section. 

We now want to prove the following theorem, which is an analogue of similar theorems in \cite{LTU2003, KLU2011, Gabdurakhmanov2025, EV2024}, and whose proof proceeds along the same lines. In fact, our proof follows closely that of \cite[Theorem 3.4]{Gabdurakhmanov2025}, which is better suited for vector bundles. Below, we shall let $\tilde{\cE}^0_i$ denote the vector bundle $\tilde{\cE}_i$ with the zero section removed.

\begin{theorem}\label{theorem: constructing Phi from the image of embeddings}
    Let $\sG_i$ be as above. Then $\sG_2(\tilde{\cE}^0_2)$ coincides with $j_{\cE} \circ \sG_1(\tilde{\cE}_1^0)$ in $H^s(U, \tilde{\cE}|_U)$ and the map $\sG_2^{-1} \circ j_{\cE} \circ \sG_1 : \tilde{\cE}_1^0 \to \tilde{\cE}_2^0$ extends to a real-analytic unitary isomorphism $\Phi : \tilde{\cE}_1 \to \tilde{\cE}_2$ projecting to an isometry $\varphi : (\tilde{M}_1, \tilde{g}_1) \to (\tilde{M}_2, \tilde{g}_2)$, such that $\vp|_U = j$ and $\Phi|_{U} = j_{\cE}$. Moreover, $\Phi$ intertwines the twisted spin connections, $\Phi^*\nabla^{A_2} = \nabla^{A_1}$.
\end{theorem}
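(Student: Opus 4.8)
The plan is to follow the by-now standard strategy (as in \cite{LTU2003, KLU2011, Gabdurakhmanov2025}) of showing that the image of each embedding $\sG_i$ is a real-analytic submanifold of $H^s(U,\tilde{\cE}|_U)$, that the two images coincide, and that the identification map is induced by a bundle isomorphism covering an isometry. First I would show that $\sG_i$ is injective away from the zero section: if $\sG_i(\psi_y) = \sG_i(\psi'_{y'})$ then the two distributional solutions $\tilde{G}_i(\cdot,y)\psi_y$ and $\tilde{G}_i(\cdot,y')\psi'_{y'}$ agree on the open set $U$, and since both satisfy $(\tilde{D}_A - m)(\cdot) = 0$ away from their singular points, unique continuation for the Dirac operator (which holds here because the operator is of Dirac type with real-analytic coefficients, or via the Lichnerowicz--Weitzenb\"ock reduction to a second-order elliptic operator with the strong unique continuation property) forces $y = y'$; then comparing the leading singularities \eqref{asymptotics of Greens kernel} at $y$ gives $\psi_y = \psi'_{y'}$. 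Next, using Proposition \ref{prop: Greens kernel C1 into Sobolev space} together with the explicit singularity asymptotics \eqref{asymptotics of Greens kernel}--\eqref{asymptotics of Greens kernel derivatives}, I would check that $\sG_i$ is a $C^1$ immersion (the derivative in the $y$-directions is detected by the more singular term $\nabla_y G$, and the derivative in the fiber directions is $\sG_i$ itself, which is nonzero), so that $\sG_i(\tilde{\cE}_i^0)$ is a $C^1$ submanifold of $H^s$.

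The heart of the argument is to promote this to real-analyticity and then glue. Since the metrics and connections are real-analytic and $\tilde{G}_i(x,y)$ is jointly real-analytic in $(x,y)$ for $x\neq y$ (by the Morrey--Nirenberg elliptic regularity results \cite{MN1957, Petrowsky1939} applied in both variables via Proposition \ref{prop: Greens kernel is symmetric}), the map $y \mapsto \tilde G_i(\cdot,y)$ is real-analytic as a map into $H^s(U,\tilde{\cE}|_U)$ — one makes this precise by noting that $U$ is a fixed open set bounded away from the diagonal once $y$ ranges over a fixed compact subset of $\tilde{M}_i \setminus \overline{U}$, and near $U$ itself one uses the boundary conjugation relation. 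Then $\sG_i(\tilde{\cE}_i^0)$ is a real-analytic submanifold. By \eqref{G_i maps intertwine}, $j_\cE \circ \sG_1$ and $\sG_2$ agree on the part lying over $U$, which is a relatively open piece of each image; since both full images are connected real-analytic submanifolds of the same dimension (namely $\dim \tilde{M}_i + 2\operatorname{rank}\tilde\cE - $ appropriate real dimension count, equal on both sides) containing this common open piece, they coincide globally in $H^s$. This is the step I expect to be the main obstacle: carefully justifying that the images are embedded real-analytic submanifolds (not merely immersed), which requires ruling out self-accumulation of the image — here one again invokes unique continuation plus the singularity structure to show $\sG_i$ is a topological embedding, and uses that $\tilde M_i$ is connected so that the analytic continuation of the local identification is unambiguous.

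Once the images coincide, define $\Phi := \sG_2^{-1}\circ j_\cE \circ \sG_1 : \tilde{\cE}_1^0 \to \tilde{\cE}_2^0$. It is real-analytic because it is a composition of real-analytic maps between real-analytic manifolds with $\sG_2$ a real-analytic embedding (so $\sG_2^{-1}$ is real-analytic on the image). It is fiberwise linear: for fixed $y$, the map $\psi_y \mapsto \tilde G_1(\cdot,y)\psi_y$ is linear, and $\sG_2^{-1}$ sends $\tilde G_2(\cdot,y')\psi'$ linearly to $\psi'$ over a fixed $y'$ (because $\sG_2$ restricted to a single fiber is linear and injective), so $\Phi$ carries the fiber over $y$ linearly into the fiber over some $\varphi(y)$; the point $\varphi(y)$ is well-defined by injectivity, and $\varphi$ is real-analytic since $\Phi$ is and the bundle projections are. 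Extend $\Phi$ over the zero sections by linearity; it remains a real-analytic bundle isomorphism covering $\varphi$. That $\varphi$ is an isometry and $\Phi$ is unitary follows by comparing leading singularities: pulling back the asymptotics \eqref{asymptotics of Greens kernel} of $\tilde G_2$ via $\Phi$ and matching with those of $\tilde G_1$ forces $\varphi$ to preserve geodesic distance infinitesimally (hence $\varphi^*\tilde g_2 = \tilde g_1$) and forces $\Phi$ to preserve the Hermitian structure. Finally, that $\Phi$ intertwines the twisted spin connections follows from \eqref{Greens kernel construction 1} and \eqref{conjugation of D_A by Phi}: $\Phi$ conjugates $\tilde D_{A_1} - m$ to $\tilde D_{A_2} - m$ (it sends Green's kernel to Green's kernel and hence intertwines the operators, by \eqref{Greens kernel definition 2} and uniqueness), and since $\varphi$ is already an isometry, subtracting off the isometry contribution \eqref{conjugation of D_A by isometry} shows the remaining part of $\Phi$ is a gauge transformation, i.e. $\Phi^*\nabla^{A_2} = \nabla^{A_1}$; the restriction statements $\varphi|_U = j$ and $\Phi|_U = j_\cE$ are immediate from \eqref{G_i maps intertwine} and the construction.
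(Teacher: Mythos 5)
Your outline follows the general template of the paper (embed into $H^s$ via the Green's kernels, use real-analyticity to propagate the agreement established over $U$), and many of the local arguments — fiberwise injectivity and the immersion property via the singularity asymptotics \eqref{asymptotics of Greens kernel}--\eqref{asymptotics of Greens kernel derivatives}, the intertwining of connections by analytic continuation from $U$ — are essentially what the paper does. But the central step of your argument contains a genuine gap. You assert that because $j_\cE\circ\sG_1(\tilde{\cE}_1^0)$ and $\sG_2(\tilde{\cE}_2^0)$ are connected real-analytic submanifolds of the same dimension sharing a relatively open piece (the part over $U$), they must coincide globally. This is false as stated, even for embedded real-analytic submanifolds: two line segments of different lengths in $\R^2$ that partially overlap are real-analytic, connected, embedded, of equal dimension, share an open arc, and do not coincide. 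You further mislocate the obstacle as ``ruling out self-accumulation'' to upgrade the injective immersion to a topological embedding; even if $\sG_i$ were a proper embedding, the problem would persist, because $\tilde M_i$ are manifolds \emph{with boundary}. When one analytically continues the fiberwise identification $\Phi_x$ inward from $U$ into $\tilde M_1$, the corresponding point $\vp(x)$ in $\tilde M_2$ might run into $\d\tilde M_2$, and the continuation would simply terminate there without a contradiction.

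The paper resolves exactly this with an open-closed argument: it defines $\Omega_1\subseteq\tilde M_1$ as the maximal connected open set on which the fiberwise identification exists and is unitary, and then proves in Lemma \ref{lemma: sequence lemma} that for any interior $x_1\in\d\Omega_1$, the limit $x_2=\lim\vp(p_k)\in\tilde M_2$ must also be interior. The proof of that lemma is the genuinely new piece relative to the second-order Laplacian cases treated in \cite{LTU2003, KLU2011, Gabdurakhmanov2025}: for a Dirichlet Laplacian the Green's kernel \emph{vanishes} at the boundary, but here it does not, and one instead observes that the chiral condition $\bB^+\tilde G_2(\cdot,x_2)=0$ together with the symmetry $\tilde G(x,y)^*=\tilde G(y,x)$ from Proposition \ref{prop: Greens kernel is symmetric} forces $\tilde G_2(x,x_2)$ to have nontrivial kernel whenever $x_2\in\d\tilde M_2$, hence $\det\tilde G_2(x,x_2)=0$; by taking determinants of \eqref{sequence lemma Green kernel eqn} and passing to the limit, this would force $\det\tilde G_1(x,x_1)\equiv 0$, contradicting the interior asymptotics \eqref{asymptotics of Greens kernel}. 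Your proposal contains no substitute for this argument. Once the sequence lemma is in hand, the graph/inverse-function-theorem bookkeeping (Lemmas \ref{graph functions are real-anal} and \ref{lemma 3.8 gab}) that finishes the open-closed argument and shows $\Phi$ is globally defined and real-analytic is comparatively routine and in line with what you sketch, as is the final analytic-continuation argument giving $\vp^*\tilde g_2=\tilde g_1$ and $\Phi^*\nabla^{A_2}=\nabla^{A_1}$.
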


\begin{proof}
    The key ingredients in the proof of Theorem \ref{theorem: constructing Phi from the image of embeddings} are the analyticity of the Green's kernels, the asymptotics \eqref{asymptotics of Greens kernel}, and the intertwining property \eqref{G_i maps intertwine}. First we shall show that the map $\sG_i$ is a linear embedding of each fiber $(\tilde{\cE_i})_x$ into $H^s(U, \tilde{\cE}|_U)$ when $x \notin \d \tilde{M}$, and an injective immersion on the set $\tilde{\cE}_i^0$. Indeed, note that if $\sG_i(\psi_y) = 0$ in $H^s(U, \tilde{\cE}|U)$, then we have $\tilde{G}_i(x,y)\psi_y = 0$ for all $x \in U$ with $x \neq y$. Thus, by analytic continuation, this must hold for all $x \in \tilde{M}_i$ with $x \neq y$. This, however, contradicts the asymptotics of the Green's kernel \eqref{asymptotics of Greens kernel}. Therefore, it follows that $\sG_i$ must be a linear embedding on each fiber away from the boundary, as claimed.

    We now show that $\sG_i$ is an injective immersion on $\tilde{\cE}^0_i$. Indeed, injectivity of $\sG_i$ follows since if $\sG_i(\psi_y) = \sG_i(\psi'_{y'})$, then $\tilde{G}_i(x,y)\psi_y = \tilde{G}_i(x,y')\psi'_{y'}$ for all $x \in U$ not equal to $y$ or $y'$. Since neither side is identically zero, by analyticity, this must hold for all $x \in \tilde{M}_i \setm \{ y, y' \}$. But since $\tilde{G}_i(x,y)$ is smooth for all $x \neq y$, it follows from the asymptotics \eqref{asymptotics of Greens kernel} that we must have $y = y'$. Since $\sG_i$ is injective on each fiber, it then follows that $\psi'_{y'} = \psi_y$. So $\sG_i$ is indeed injective on $\tilde{\cE}^0_i$. It remains to show that $\sG_i$ is an immersion on $\tilde{\cE}^0_i$. For this, we need to show that if $(D\sG_i)_{(y,\psi)}(v, \phi) = 0$ for $v$ in $T_y \tilde{M}_i$ and $\phi \in T_{\psi}\tilde{\cE}^0_i$, then $v = 0$ and $\phi = 0$. Note that for $x \neq y$,
    \begin{equation}\label{immersion equation}
        (D\sG_i)_{(y,\psi)}(v, \phi)\big|_x = v \cdot \nabla_{y} \tilde{G}_i(x,y) \psi  + \tilde{G}_i(x,y) \phi.
    \end{equation}
    If $ (D\sG_i)_{(y,\psi)}(v, \phi) = 0$, then the right-hand-side of \eqref{immersion equation} vanishes for all $x \in U$ with $x \neq y$. So we have again by analyticity that the right-hand-side vanishes for all $x \in \tilde{M}_i$ not equal to $y$. We deduce once again from the asymptotics \eqref{asymptotics of Greens kernel}--\eqref{asymptotics of Greens kernel derivatives} that we must have $v = 0$ and $\phi = 0$. Therefore, $D \sG_i$ is injective at every point in $\tilde{\cE}^0_i$, and so $\sG_i$ is indeed an immersion on $\tilde{\cE}_i^0$.

    We now want to show that $\sG_2(\tilde{\cE}_i^0)$ coincides with $j_{\cE} \circ \sG_1(\tilde{\cE}_i^0)$, and that $\sG_2^{-1} \circ j_{\cE} \circ \sG_1$ extends to a real-analytic isomorphism between $\tilde{\cE}_1$ and $\tilde{\cE}_2$. For this, let us fix $p_1 \in \sU$, and define $\Omega_1 \sub \tilde{M}_1$ to the largest connected open set containing $p_1$ such that for all $x \in \Omega_1$ there is a unique $\vp(x) \in \tilde{M}_2$ such that the images $j_{\cE} \circ \sG_1((\tilde{\cE_1})_x)$ and $\sG_2((\tilde{\cE}_2)_{\vp(x)})$ coincide, and
    \begin{equation}\label{Phi at x}
        \Phi_x = \sG_2^{-1} \circ j_{\cE} \circ \sG_1 : (\tilde{\cE}_1)_x \to (\tilde{\cE}_2)_{\vp(x)}
    \end{equation}
    is an isometry of the fibers with respect to their inner products. Note that equation \eqref{Phi at x} defines a fiber-preserving isometry $\Phi : \tilde{\cE}_1|_{\Omega_1} \to \tilde{\cE_2}$. We want to show that $\Omega_1 = \tilde{M}_1$. First, let us note that $\Omega_1$ is easily seen to contain $U$. Indeed, for $x \in U$, one can take as $\vp(x)$ the point $j(x) \in \tilde{M}_2$, where $j : (U, \tilde{g}_1) \to (U, \tilde{g}_2)$ is the isometry above. That \eqref{Phi at x} holds follows from \eqref{G_i maps intertwine}, which also implies that the map $\Phi$ defined above restricts to $j_{\cE}$ on $U$.

    Now let us show that $\Omega_1$ must be all of $\tilde{M}_1$. If this were not so, then there would exist $x_1 \in \d \Omega_1$ lying in the interior of $\tilde{M}_1$, so $x_1 \notin \tilde{M}_1$. We claim that $\Phi$ can be extended to the fiber $(\tilde{\cE}_1)_{x_1}$. For this, we need the following technical lemma, which is an analogue of similar results found in \cite{LTU2003, KLU2011, Gabdurakhmanov2025}. An important ingredient in the proofs of those results for second-order operators is the vanishing of the Green's kernel on the boundary. As our Green's kernel does not vanish on the boundary, we shall have to adapt the proof to the chiral boundary condition \eqref{Greens kernel definition 1}.

    \begin{lemma}\label{lemma: sequence lemma}
        Let $x_1 \in \d \Omega_1$ with $x_1 \notin \d \tilde{M}_1$, as above. Then there is a unique point $x_2$ in the interior of $\tilde{M}_2$ such that the images $j_{\cE} \circ \sG_1((\tilde{\cE}_1)_{x_1})$ and $\sG_2((\tilde{\cE}_2)_{x_2})$ coincide, and the map $\Phi_{x_1}$ is an isometry. Moreover, for any non-zero $v_{x_1} \in (\tilde{\cE}_1)_{x_1}$, there is a unique non-zero $w_{x_2} \in (\tilde{\cE}_2)_{x_2}$ such that
        \begin{equation}
            j_{\cE} \circ \sG_1(v_{x_1}) = \sG_2(w_{x_2}), \ \ \ \ \ |v_{x_1}|_{\cE_1} = |w_{x_2}|_{\cE_2},
        \end{equation}
        and for any sequence $v_{p_k} \to v_{x_1}$ with $p_k \to x_1$, we have $\Phi(v_{p_k}) \to w_{x_2}$.
    \end{lemma}

    \begin{proof}
        Let $p_k \in \Omega_1$ be a sequence such that $p_k \to x_1$, and let $q_k := \vp(p_k)$. We have by definition of $\Omega_1$ that the images $j_{\cE} \circ \sG_1((\tilde{\cE}_1)_{p_k})$ and $\sG_2((\tilde{\cE}_2)_{q_k})$ coincide, and the map $\Phi_{p_k}$ is an isometry of the fibers. Since $\tilde{M}_2$ is compact, by passing to a subsequence, we may assume that $q_k \to x_2$ for some $x_2 \in \tilde{M}_2$. We want to show that $x_2$ cannot be in $\d \tilde{M}_2$. So let us suppose that $x_2 \in \d \tilde{M}_2$. Then
        \begin{equation}
            j_{\cE} \circ ( \sG_1 )_{p_k} = (\sG_2)_{q_k} \circ \Phi_{p_k}
        \end{equation}
        on $(\tilde{\cE})_{p_k}$ for all $k$, which implies that
        \begin{equation}\label{sequence lemma Green kernel eqn}
            j_{\cE} \circ \tilde{G}_1(x, p_k) = \tilde{G}_2(x,q_k) \circ \Phi_{p_{k}}
        \end{equation}
        for all $x \in U$. In particular, since $j_{\cE}$ and $\Phi_{p_k}$ are unitary, by choosing orthonormal frames near $x_1$ and $x_2$ and taking determinants of \eqref{sequence lemma Green kernel eqn}, we obtain
        \begin{equation}\label{sequence lemma determinant eqn}
            |\det{\tilde{G}_1(x,p_k)}| = |\det{\tilde{G}_2(x,q_k)}| 
        \end{equation}
        for all $x \in U$. Taking the limit of \eqref{sequence lemma determinant eqn}, we obtain $|\det{\tilde{G}_1(x,x_1)}| = |\det{\tilde{G}_2(x,x_2)}|$ for all $x \in U$. On the other hand, note that since $\bB^+$ is the projection onto $\bV_E^+$, the chiral boundary conditions \eqref{Greens kernel definition 1} and Proposition \ref{prop: Greens kernel is symmetric} imply that $\tilde{G}_2(x,x_2)$ has non-trivial kernel for all $x$, when $x_2 \in \d \tilde{M}_2$. In particular, we have $\det{\tilde{G}_2(x,x_2)} = 0$, and thus also $\det{\tilde{G}_1(x,x_1)} = 0$ for all $x \in U$. By analyticity, we must therefore have $\det{\tilde{G}_1(x,x_1)} = 0$ for all $x \in \tilde{M}_1$ with $x \neq x_1$. But since $x_1$ is an interior point of $\tilde{M}_1$, the asymptotics \eqref{asymptotics of Greens kernel} imply that
        \begin{equation}
            |\det{\tilde{G}_1(x,x_1)}| = c_n^N r(x,x_1)^{-N(n-1)} + O\left( r(x,y)^{N(1-n + \lambda)} \right),
        \end{equation}
        where $N = \rank{\tilde{\cE}_1}$. This is a contradiction. Therefore, we must have $x_2 \notin \d \tilde{M}_2$.

        Now, to complete the proof of the lemma, let $v_{x_1}$ be a non-zero vector in $(\tilde{\cE}_1)_{x_1}$. We can choose a sequence $v_k \in (\tilde{\cE}_1)_{p_k}$ converging to $v_{x_1}$, and let $w_k := \Phi_{p_k}(v_k)$ so that $j_{\cE} \circ \sG_1(v_k) = \sG_2(w_k)$ and $|v_k|_{\cE_1} = |w_k|_{\cE_2}$. Since the sequence $w_k$ is bounded, we can pass to a subsequence and assume that $w_k \to w_{x_2}$ for some $w_{x_2}$ in $(\tilde{\cE}_2)_{x_2}$. We have $|w_{x_2}| = |v_{x_1}|$. Moreover, since $x_2$ is an interior point, the rest of the lemma follows from the continuity of $\sG_1$ and $\sG_2$, as well as the fact that $\sG_i$ is a linear embedding of the fibers away from the boundary.
    \end{proof}
    The rest of the proof of Theorem \ref{theorem: constructing Phi from the image of embeddings} proceeds as in \cite{KLU2011, EV2024, Gabdurakhmanov2025}, using Lemma \ref{lemma: sequence lemma} to show that a neighbourhood of $x_2$ must be contained in $\Omega_1$. We shall thus present the argument while omitting some of the details, which can be found in the aforementioned references.

    Let $\sR_i \sub H^s(U, \tilde{\cE}|_U)$ be the image of $\sG_i$. Let $v_{x_1} \in (\tilde{\cE}_1)_{x_1}$ be a non-zero vector, and let $x_2 \in \tilde{M_2}$ and $w_{x_2} \in (\tilde{\cE}_2)_{x_2}$ be the point and vector satisfying the conclusions of Lemma \ref{lemma: sequence lemma}. Then we have that $j_{\tilde{\cE}} \circ \sG_1(v_{x_1}) = \sG_2(w_{x_2})$, and we denote this common value by $u$. Since $\sG_i$ is an injective immersion, $j_{\cE}(\sR_1)$ and $\sR_2$ are submanifolds of $H^s(U, \tilde{\cE}|_U)$ whose tangent spaces are given by the images of the differentials $D(j_{\cE} \circ \sG_1)$ and $D\sG_2$, respectively. Lemma \ref{lemma: sequence lemma} then implies that $T_u j_{\cE}(\sR_1)$ and $T_u \sR_2$ coincide as subspaces in $H^s(U, \tilde{\cE}|_U)$; we shall denote this subspace by $\sV$. If $\Pi_{\sV}$ denotes the orthogonal projection of $H^s(U, \tilde{\cE}|_U)$ onto $\sV$, then $\Pi_{\sV} \circ \sG_2$ has an invertible differential near $w_{x_2}$. Therefore, by the inverse function theorem, there exists a $C^1$ inverse map $H_2 : \sO \to \tilde{\cE}_2$ defined in a neighbourhood $\sO$ of $\Pi_{\sV}(u)$. Then, near $u$ the image $\sR_2$ is the graph of the map 
    \begin{equation}\label{F_2 local graph definition}
        F_2 : \sO \to \sV^{\bot}, \ \ \ \ \ v \mapsto \sG_2(H_2(v)) - v.
    \end{equation}
    Similarly, there exists a $C^1$ map $H_1 : \sO \to \tilde{\cE}_1$ such that near $u$ the image $j_{\cE}(\sR_1)$ is the graph of a map $F_1$ defined in analogy with \eqref{F_2 local graph definition}. It is easy to see that the isomorphism $\Phi$ is equal to $H_2 \circ H_1^{-1}$ on the open subset $H_1(\sO) \cap \tilde{\cE}_1|_{\Omega_1}$. We have the following lemma, whose proof proceeds just as in \cite[Lemma 3.6, 3.7]{Gabdurakhmanov2025}, wherein the key ingredients are the real-analyticity of the Green's kernels and the fact that $x_1$ is not in $\bar{U}$, as observed above. 

    \begin{lemma}\label{graph functions are real-anal}
        The maps $H_i : \sO \to \tilde{\cE}_i$ are real-analytic in a neighbourhood of $\pi(u)$ in $\sV$. Moreover, the maps $F_i : \sO \to \sV^{\bot}$ coincide in a neighbourhood of $\Pi_{\sV}(u)$ in $\sV$.
    \end{lemma}

    In particular, it follows that there is a neighbourhood of $v_{x_1}$ in $\tilde{\cE}_1$ on which the map $H_2 \circ H_1^{-1}$ is real-analytic, while the equality of $F_1$ and $F_2$ implies that the images $j_{\cE}(\sR_1)$ and $\sR_2$ coincide near the point $u$. So far, all this is near the point $u = j_{\tilde{\cE}} \circ \sG_1(v_{x_1}) = \sG_2(w_{x_2})$. To complete the proof of Theorem \ref{theorem: constructing Phi from the image of embeddings}, we quote one more lemma \cite[Lemma 3.8]{Gabdurakhmanov2025}, whose proof is once again identical when applied to our setting. The main ingredients here are the conical structures of $j_{\cE}(\sR_1)$ and $\sR_2$, and the fact that if $\sG_i((\tilde{\cE}_i)_x)$ has non-trivial intersection with $\sG_i((\tilde{\cE}_i)_p) \oplus \sG_i((\tilde{\cE}_i)_q)$ in $H^s(U, \tilde{\cE}|_U)$, then it must follow that either $x = p$ or $x = q$; this last statement follows easily from the real-analyticity of the Green's kernel and the asymptotics \eqref{asymptotics of Greens kernel}.

    \begin{lemma}\label{lemma 3.8 gab}
        There is a neighbourhood $W_1$ of $x_1 \in \tilde{M}_1$ such that for all $x \in W_1$, there is $y \in \tilde{M}_2$ such that the images $j_{\cE} \circ \sG_1((\tilde{\cE}_1)_x)$ and $\sG_2((\tilde{\cE}_2)_y)$ coincide.
    \end{lemma}

    In particular, Lemma \ref{lemma 3.8 gab} implies that $\Phi_x := \sG_2^{-1} \circ j_{\cE} \circ \sG_1$ is a well-defined map from $(\tilde{\cE}_1)_x$ to $(\tilde{\cE}_2)_y$ for all $x \in W_1$, and by Lemma \ref{graph functions are real-anal}, $\Phi_x$ must coincide with $H_2 \circ H^{-1}$ on a neighbourhood $\sN$ of a given $v_{x_1} \in (\tilde{\cE}_1)_{x_1}$. Since the map $H_2 \circ H_1^{-1}$ is real-analytic and an isometry on $\sN \cap \tilde{\pi}_1^{-1}(\Omega_1)$, where $\tilde{\pi}_1 : \tilde{\cE}_1 \to \tilde{M}_1$ is the bundle projection, it follows by analytic continuation that $H_2 \circ H_1^{-1}$ is an isometry on $\sN$. As this holds for all points $v_{x_1}$ in the unit sphere of $(\tilde{\cE}_1)_{x_1}$, it follows that $\Phi_x$ is an isometry for al $x \in W_1$. We have thus showed that $W_1 \subseteq \Omega_1$. But this is a contradiction since we assumed that the interior point $x_1$ was in $\d \Omega_1$. Therefore, we must have that $\Omega_1$ is all of $\tilde{M}_1$.

    The rest of Theorem \ref{theorem: constructing Phi from the image of embeddings} now follows easily. Indeed, we have that $\Phi = \sG_2^{-1} \circ j_{\cE} \circ \sG_1$ is a well-defined fiber-preserving map $\tilde{\cE}_1 \to \tilde{\cE}_2$. By Lemma \ref{graph functions are real-anal}, $\Phi$ locally agrees with the maps $H_2 \circ H_1^{-1}$, and so is real-analytic. As $\Phi$ is a unitary isomorphism on each fiber, it is a real-analytic unitary isomorphism of bundles. In particular, it projects to a real-analytic map $\vp : \tilde{M}_1 \to \tilde{M}_2$. We have by construction that $\Phi|_U = j_{\cE}$ and $\vp|_U = j$. Since we have $\vp^* \tilde{g}_2 = \tilde{g}_1$ on $U$, it follows by analytic continuation that $\vp^* \tilde{g}_2 = \tilde{g}_1$ everywhere on $\tilde{M}_1$, and so $\vp$ is an isometry. Similarly, since $\Phi$ intertwines the twisted spin connections on $U$, it follows again by analytic continuation that $\Phi^*\nabla^{A_2} = \nabla^{A_1}$ everywhere. This completes the proof of Theorem \ref{theorem: constructing Phi from the image of embeddings}.
\end{proof}

We now complete the proof of Theorem \ref{main theorem: reconstruction, intro}. First, let us pull back all bundles over $\tilde{M}_2$ by the isometry $\vp$ so that $\Phi : \tilde{\cE}_1 \to \tilde{\cE}_2$ becomes a vertical bundle isomorphism projecting to the identity on $\tilde{M}_1$. The same is true for the isomorphisms $j_S : \tilde{\bS}_{1}|_U \to \tilde{\bS}_2|_{U}$ and $j_E : \tilde{E}_1|_{U} \to \tilde{E}_2|_{U}$. Note that the spinor bundle $\tilde{\bS}_2$ becomes a bundle of spinors over $\tilde{M}_1$, possibly associated to a different spin structure on $\tilde{M}_1$ than the one inducing $\tilde{\bS}_1$. In particular, there exists a complex line bundle $L$ on $\tilde{M}_1$, which is the complexification of a real line bundle, and an isomorphism $\sJ : \tilde{\bS}_1 \to \tilde{\bS}_2 \otimes L$ of Clifford module bundles; see \cite[\S 2.5]{Friedrich2000} for example. It is easy to see that $\sJ$ must be real-analytic. Composing $\Phi$ with $\sJ^{-1} \otimes \id_{E_1}$, we obtain a unitary isomorphism $\Phi' : \tilde{\bS}_2 \otimes L \otimes \tilde{E}_1 \to \tilde{\bS}_2 \otimes \tilde{E}_2$. We then define $\Phi_E : \tilde{E}_1 \otimes L \to \tilde{E}_2$ as the partial trace of $\Phi'$ over $\tilde{\bS}_2$,
\begin{equation}
    \Phi_E := \frac{1}{N_S} \Tr_{\bS_2} \Phi',
\end{equation}
where $N_S$ is the rank of $\tilde{\bS}_2$. Note that given the identification $\sJ : \tilde{\bS}_1 \to \tilde{\bS}_2 \otimes L$, the isomorphism $j_S : \tilde{\bS}_1|_U \to \tilde{\bS}_2|_U$ over $U$ induces an isomorphism $j_L : L|_U \to \C$. One can then check that
\begin{equation}
    \Phi_E|_U = j_E \otimes j_L,
\end{equation}
identifying $\tilde{E}_2$ with $\tilde{E}_2 \otimes \C$. Since $\Phi_E$ is a unitary isomorphism over $U$, the same is true over $\tilde{M}_1$ by real-analyticity. We now claim that there is a flat connection $\nabla^L$ on $L$ such that the pullback of $\nabla^{A_2}$ by $\Phi_E$ is the product connection $\nabla^{A_1} \otimes \nabla^{L}$. This shall then complete the proof of Theorem \ref{main theorem: reconstruction, intro}. Note that since $L$ is the complexification of a real line bundle, one can take local trivializations with locally constant transition functions. In particular, we can construct a real-analytic, even flat, unitary connection $\nabla^{L'}$ on $L$, and define $a \in \Omega^1(\tilde{M}_1, \End(\tilde{E}_1 \otimes L))$ by
\begin{equation}
    a := \nabla^{A_1} \otimes \nabla^{L'} - \Phi_E^*\nabla^{A_2}.
\end{equation}
Define $b \in \Omega^1(\tilde{M}_1, \C)$ as the partial trace of $a$ over $\tilde{E}_1$,
\begin{equation}
    b := \frac{1}{N_E} \Tr_{\tilde{E}_1} a.
\end{equation}
It is easy to see that $b \in \Omega^1(\tilde{M}_1, i\R)$. Then, using the fact that $j_E$ is a gauge equivalence of $\nabla^{A_1}$ and $\nabla^{A_2}$ over $U$, one can check that $b$ coincides with the connection $1$-form of $\nabla^{L'}$ in the trivialization $j_L$. It follows that the connection $\nabla^L$ on $L$ defined by $\nabla^L := \nabla^{L'} - b$ satisfies
\begin{equation}\label{pullback is product connection on U}
    \Phi_E^* \nabla^{A_2} \big|_U = \nabla^{A_1} \otimes \nabla^L \big|_U,
\end{equation}
which by analytic continuation continues to hold over all of $\tilde{M}_1$. By computing the curvatures of the connections on both sides of \eqref{pullback is product connection on U} and using the local gauge equivalence of $\nabla^{A_1}$ and $\nabla^{A_2}$ on $U$, one finds that the curvature of $\nabla^L$ vanishes on $U$. By analyticity, $\nabla^L$ is flat everywhere. This completes the proof of our main uniqueness result, Theorem \ref{main theorem: reconstruction, intro}.
\qed

\vspace{10pt}

As remarked in Section \ref{subsec: main results}, an immediate corollary of Theorem \ref{main theorem: boundary determination, intro} is that the connections $A_1$ and $A_2$ are locally gauge equivalent everywhere in $\tilde{M}_1$, and that the only obstruction to global gauge equivalence is this flat $2$-torsion line bundle $L$, which arises from the possibility that the spinor bundles $\bS_1$ and $\bS_2$ do not necessarily correspond to isomorphic spin structures. The main issue here is that it is not generally possible to globally `untwist' the twisted spinor bundles to obtain the two factors $\bS$ and $E$ independently, as the following example shows. 
\vspace{5pt}
\begin{example}
    Let $M \sub \R^{2d}$ be a bounded open domain with $H^1(M, \Z) = \Z_2$ and $H^2(M, \Z) = \Z_2$. For example, one can embed a connected CW complex $X$ with such cohomology groups into $\R^{2d}$ for large enough $d$, and let $M$ be a tubular neighbourhood of $X$. One can verify that the Bockstein homomorphism $\b : H^1(M,\Z_2) \to H^2(M, \Z)$ is non-zero. There exists on such an $M$ a non-trivial complex line bundle $L$, which is the complexification of a non-trivial real line bundle (see \cite[\S 15]{MilnorStasheff1974} for example). Now, let $\bS_1$ be a trivial spinor bundle over $M$, and let $E_1$ be the trivial $\C$-line bundle with the flat connection. Let $\bS_2$ be the complex spinor bundle associated to another spin structure such that $\bS_2 \cong \bS_1 \otimes L$, and let $E_2 = L^*$, where $L$ is also given a flat connection. It is easy to see that $\bS_1 \otimes E_1 \cong \bS_2 \otimes E_2$ as Clifford module bundles, and moreover, the twisted Dirac operators are isomorphic. By letting $\Pi$ be the standard chirality operator in even dimensions, one can check that the boundary conjugation maps are equivalent over an open set $\sU \sub \d M$.
\end{example}
\vspace{5pt}

Of course, if we know a priori that the line bundle $L$ must be trivial, then it follows that the map $\Phi_E$ constructed above is a unitary isomorphism from $E_1$ to $E_2$ and a global gauge equivalence of the connections. This is of course true if $\bS_1 \cong \bS_2$ or if $E_1 \cong E_2$. Moreover, if $E_1 \cong E_2$, then we can define a map $\Phi_S : \tilde{\bS}_1 \to \tilde{\bS}_2$ as a partial trace of $\Phi : \tilde{\bS}_1 \otimes \tilde{E}_1 \to \tilde{\bS}_2 \otimes \tilde{E}_1$, 
\begin{equation}
    \Phi_S := \frac{1}{N_E} \Tr_{E_1} \Phi.
\end{equation}
One checks that $\Phi_S|_U = j_S$, and so by analyticity, $\Phi$ yields a unitary isomorphism of the complex spinor bundles. Moreover, since $j_S$ intertwines the Clifford multiplications of $\tilde{\bS}_1$ and $\tilde{\bS}_2$ over $U$, it follows by analyticity that $\Phi_S$ does so over $\tilde{M}_1$. Therefore, $\Phi_S : \tilde{\bS}_1 \to \tilde{\bS}_2$ is indeed an isomorphism of Clifford modules, and Corollary \ref{corollary: global gauge equiv, intro} follows.

\vspace{5pt}

\begin{remark}[Recovering the $\Spin^c$ structure]\label{remark: recover spin c}
    It is known that there is an equivalence between $\Spin^c$ structures on a manifold and bundles of irreducible complex Clifford modules \cite{Plymen1986}. Thus, if the complex spinor bundles above are the canonical spinor bundles, then under the hypotheses of Corollary \ref{corollary: global gauge equiv, intro} we have that $\bS_1$ and $\bS_2$ are isomorphic as complex Clifford modules and are irreducible, and thus determine the same $\Spin^c$ structure. Note, however, that it is not generally possible to recover the underlying spin structure, as two spin structures may be isomorphic as $\Spin^c$-structures \cite[\S 2.5]{Friedrich2000}. It is interesting to note that in the real case, the equivalence between spin structures and bundles of irreducible real Clifford modules exists only in dimensions congruent to $6$, $7$, or $0$ mod $8$; in other dimensions the equivalence does not hold \cite{Lawson2023, Chen2017thesis}.
\end{remark}


\section*{Acknowledgements}

This work has received funding from the European Research Council (ERC) under the European Union's Horizon 2020 research and innovation programme through the grant agreement 862342. The author is also partially supported by the grants CEX2023-001347-S, RED2022-134301-T and PID2022-136795NB-I00.

\vspace{10pt}

\addcontentsline{toc}{section}{References}

\end{document}